\newtheorem{theorem}{Theorem}[section]
\newtheorem{definition}{Definition}[section]
\newtheorem{remark}{Remark}[section]
\newtheorem{corollary}{Corollary}[section]
\newtheorem{lemma}{Lemma}[section]
\newtheorem{question}{Question}[section]
\title{The Critical LYZ equation in K\"ahler Geometry}
\author{Jixiang Fu}
\author{Shing-Tung Yau}
\author{Dekai Zhang}
\address{Shanghai Center for Mathematical Sciences,
	Fudan University,
	Shanghai 200433, China.}
\email{majxfu@fudan.edu.cn}
\address{Yau Mathematical Sciences Center,
Tsinghua University,
Beijing 100084, China.}
\email{styau@tsinghua.edu.cn}
\address{School of Mathematical Sciences, Key Laboratory of Mathematics and Engineering Applications (Ministry of Education), Shanghai Key Laboratory of PMMP, East China Normal University, Shanghai 200241, China. 
}
\email{dkzhang@math.ecnu.edu.cn}
\begin{document}

\begin{abstract}

We establish the existence of smooth solutions for the LYZ equation at the critical phase $\theta =(n-2)\frac{\pi}{2}$, thereby solving the critical case of a problem posed by Collins-Jacob-Yau \cite{cjy2020} and Li \cite{YangLi2022arxiv} concerning the solvability for phase $\theta \le (n-2)\frac{\pi}{2}$.  As applications, we solve the 3-dimensional Hessian equation $\sigma_{2}= 1$ and the 4-dimensional Hessian quotient equation $\sigma_{3}=\sigma_{1}$ under weaker assumptions than previously required \cite{sun2017cpam, szek2018}.

\textbf{Keywords:} critical LYZ equation; subsolution; uniform estimates; blow-up argument

\end{abstract}

\maketitle
\renewcommand{\thefootnote}
\footnotetext{2020 Mathematics Subject Classification. Primary 32W20; Secondary 53C55.}
\renewcommand{\thefootnote}{\arabic{footnote}} 
\section{Introduction}
In K\"ahler geometry, the Leung-Yau-Zaslow (LYZ) equation, also known as the deformed Hermitian Yang-Mills (dHYM) equation, is a fully nonlinear elliptic partial differential equation arising from two fundamental sources: mirror symmetry \cite{LYZ1999} and supersymmetric gauge theory \cite{MMMS2000}. This equation has become a central object of study at the intersection of geometry and physics with profound connections to stability conditions in derived categories and the Thomas-Yau conjecture \cite{ThomasYau2022CAG}.

Let \((M,\omega)\) be a compact K\"ahler manifold and \(\chi\) a closed real \((1,1)\)-form. Assume that the complex number \(\int_M(\chi + \sqrt{-1}\omega)^n\) is non-zero, and let \(\hat{\theta}\) denote its principal argument. The LYZ equation seeks a smooth function \(u:M\to\mathbb{R}\) such that
\begin{equation}\label{LYZe}
	\begin{aligned}
		\mathrm{Im}\big(e^{-\sqrt{-1}\hat\theta}(\chi_u+\sqrt{-1}\omega)^n\big)=0
	\end{aligned}
\end{equation}
where $\chi_u=\chi+\sqrt{-1}\partial\bar\partial u$. 
Denote by \(\lambda = (\lambda_1,\dots,\lambda_n)\) the eigenvalues of \(\chi_u\) with respect to \(\omega\). The special Lagrangian operator is defined as
\[\theta_\omega(\chi_u) := \sum_{i=1}^n \arctan \lambda_i.\]
Then equation \eqref{LYZe} can be rewritten as
\begin{equation}\label{LYZ2}
	\begin{aligned}
		\theta_{\omega}(\chi_u)=\theta,\quad \theta\equiv n\frac\pi 2-\hat\theta\,(\textup{mod}\,\pi).
	\end{aligned}
\end{equation}
This formulation reveals that the LYZ equation is the complex analogue of the special Lagrangian equation \cite{HarveyLawson1982} for graphs with $\theta_\omega$ playing the role of the Lagrangian angle.
As named by Yuan \cite{yuan2006pams} for the special Lagrangian equation, equation \eqref{LYZ2} is called the \emph{supercritical}  if \(\theta \in ((n-2)\frac{\pi}{2}, n\frac{\pi}{2})\) and the \emph{critical} if \(\theta = (n-2)\frac{\pi}{2}\), i.e., \(\hat{\theta} = \pi\).  

The equation is named after Leung, Yau, and Zaslow \cite{LYZ1999}, who formulated it within the SYZ framework of mirror symmetry, where it serves as the mirror counterpart of the special Lagrangian equation.  Independently, Mari\~ no, Minasian, Moore and Strominger \cite{MMMS2000} derived the same equation from physical considerations in supersymmetric D-brane theory, where the phase parameter $\theta$ encodes the supersymmetry breaking threshold. From this physical perspective, the critical case $\theta =(n-2) \frac{\pi}{2}$ represents the boundary between BPS and non-BPS configurations, thereby providing a natural setting for wall-crossing phenomena.

A key requirement for solving the LYZ equation is the existence of a subsolution introduced by Collins-Jacob-Yau \cite{cjy2020}: a smooth function \(\underline{u}\) satisfying
\begin{align}\label{subsolutiontheta}
	\min\limits_{1\le j\le n}\sum\limits_{\substack{1\le i\le n,\,i\neq j}}\mathrm{arctan}\lambda_i(\chi_{\underline {u}})>\theta-\frac{\pi}{2}.
\end{align}
This notion of subsolution is equivalent to those introduced by Guan \cite{guan2014} and by Sz\'ekelyhidi \cite{szek2018}. Their seminal work plays a crucial role in the existence theory.

The systematic study of the LYZ equation was initiated by Jacob and Yau \cite{jacobyau2017} who solved the two-dimensional case and also obtained partial results in higher dimensions. Collins, Jacob and Yau \cite{cjy2020} then solved the supercritical case under the assumption of a subsolution \(\underline{u}\) together with the extra condition \(\theta_\omega(\chi_{\underline{u}}) > (n-2)\frac{\pi}{2}\). The authors \cite{fyz2024} introduced a geometric flow to solve the equation under the same conditions. The extra condition was subsequently removed by Pingali \cite{Pingali2022APDE} for \(n=3\), by Lin \cite{lin2023adv} for \(n=3,4\), and finally by Lin  \cite{lin2023} in full generality. An alternative proof based on Chen's significant result  \cite{Chen2021Invent} was given by Sun \cite{sun2024}.
For further recent  developments, we refer to
\cite{ChanJacob2023arxiv, ChuCollinsLee2021GT,ChuLee2023Crelle, clt2024jdg, CollinsXieYau2018,CollinsYau2021APDE,HanJin2021CVPDE,HanJin2023Tran,HanJin2024Manu,HuangZhangZhang2022SCM,Jacob2021PAMQ, jacob2022arxiv,JacobSheu2022AsianJM,KhalidDyrefelt2024imrn,Lin2024MRL,Takahashi2021ijm}.

From the perspective of mirror symmetry and stability conditions, the LYZ equation holds a privileged position. As explained in Li's comprehensive survey \cite{YangLi2022arxiv}, the Thomas-Yau conjecture \cite{ThomasYau2022CAG} proposes that the existence of special Lagrangian representatives in a given Hamiltonian isotopy class should be governed by stability conditions in the derived Fukaya category. The LYZ equation emerges as the mirror counterpart of the special Lagrangian equation on the A-side, with the phase parameter $\theta$ playing the role of the stability angle.

Collins, Jacob and Yau \cite{cjy2020} suggested studying the LYZ equation when \(\theta \leq (n-2)\frac{\pi}{2}\). This was later explicitly posed as a question by Li \cite{YangLi2022arxiv}, as resolving it is essential for the LYZ equation to induce a Bridgeland stability condition on \(D^b\mathrm{Coh}(M)\), the bounded derived category of coherent sheaves on \(M\).

In this paper we solve the critical case; that is, we establish the existence of smooth solutions for the critical LYZ equation
\begin{align}\label{LYZ1}
	\theta_{\omega}(\chi_u)=(n-2)\frac{\pi}{2}.
\end{align}
In this case \(\hat\theta=\pi\), and the subsolution condition becomes
\begin{align}\label{subsolution}
	\min\limits_{1\le j\le n}\sum\limits_{\substack{1\le i\le n,\,i\neq j}}^n\mathrm{arctan}\lambda_i(\chi_{\underline {u}})>{(n-3)\frac \pi 2}
\end{align}
which is equivalent to $\theta_{\omega}(\chi_{\underline{u}})>\frac{n}{n-1}(n-3)\frac{\pi}{2}$ and $\mathrm{Im}(\chi_{\underline{u}}+\sqrt{-1}\omega)^{n-1}>0$ (see Lemma \ref{subequi}).
\begin{theorem}\label{FYZ} Let \((M,\omega)\) be a compact K\"ahler manifold and \(\chi\) a closed real \((1,1)\)-form such that the principal argument of \(\int_M(\chi + \sqrt{-1}\omega)^n\) is \(\pi\). If there exists a smooth function \(\underline{u}\) satisfying \eqref{subsolution}, then the critical LYZ equation \eqref{LYZ1} admits a unique smooth solution \(u\) with \(\sup_M u = 0\).
\end{theorem}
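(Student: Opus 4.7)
The plan is to reduce the critical problem to the already-solved strictly supercritical one by an approximation scheme. For each small $\varepsilon>0$, I perturb to $\chi^{(\varepsilon)}:=\chi+\varepsilon\omega$. The eigenvalues of $\chi^{(\varepsilon)}_v$ with respect to $\omega$ are obtained from those of $\chi_v$ by adding $\varepsilon$, so the principal argument $\hat\theta_\varepsilon$ of $\int_M(\chi^{(\varepsilon)}+i\omega)^n$ decreases from $\pi$ as $\varepsilon$ grows, and the corresponding phase $\theta_\varepsilon=n\pi/2-\hat\theta_\varepsilon$ satisfies $\theta_\varepsilon>(n-2)\pi/2$ with $\theta_\varepsilon\to(n-2)\pi/2$ as $\varepsilon\to 0^+$. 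Because \eqref{subsolution} has a strict positive gap and both sides of the supercritical subsolution condition \eqref{subsolutiontheta} vary continuously, $\underline u$ remains a subsolution of the perturbed equation for all sufficiently small $\varepsilon>0$. The known solvability in the strictly supercritical case then yields a smooth solution $u_\varepsilon$ of $\theta_\omega(\chi^{(\varepsilon)}_{u_\varepsilon})=\theta_\varepsilon$ normalized by $\sup_M u_\varepsilon=0$.

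The rest of the proof consists of establishing a priori estimates on $u_\varepsilon$ that are uniform in $\varepsilon$, so that a subsequence converges smoothly to a solution of \eqref{LYZ1}. A uniform $C^0$ bound follows from a comparison argument with the subsolution, with constants depending only on the geometry and the size of the gap in \eqref{subsolution}. A uniform $C^1$ bound follows from $C^0$ and $C^2$ by a maximum-principle argument applied to a quantity of the form $|\nabla u_\varepsilon|^2 e^{-A(u_\varepsilon-\underline u)}$. Once a uniform $C^2$ estimate is in hand, the equation is uniformly elliptic along the solutions, and Evans--Krylov together with Schauder theory give uniform $C^{k,\alpha}$ bounds for every $k$; a diagonal-subsequence compactness argument then delivers a smooth limit $u$. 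Uniqueness subject to $\sup_M u=0$ follows from the maximum principle for the concave operator $\theta_\omega$, exactly as in the supercritical case.

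The central obstacle is the uniform $C^2$ estimate, since the pointwise-concavity tricks used in the strictly supercritical regime degenerate as $\theta_\varepsilon\searrow(n-2)\pi/2$: the level set of $\theta_\omega$ has an asymptotic cone that barely stays inside the admissible region, and the standard auxiliary-function methods lose the sign they rely on. I would attack it by a blow-up / contradiction argument, as signalled by the keywords. Suppose $\sup_M|\partial\bar\partial u_\varepsilon|_\omega\to\infty$ along a sequence $\varepsilon_k\to 0$; pick maximum points $p_k$, holomorphic coordinates at $p_k$ in which $\omega(p_k)$ is Euclidean, and rescale the spatial variables so that the Hessian is normalized to unit size at the origin. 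The uniform $C^0$ and $C^1$ bounds, together with the compactness behaviour of the rescaled subsolution, let the rescaled potentials converge on compact subsets to an entire $C^{1,1}$ solution $v$ of the constant-coefficient critical special Lagrangian equation $\sum_i\arctan\lambda_i(\partial\bar\partial v)=(n-2)\pi/2$ on $\mathbb{C}^n$, with a surviving structural imprint of the subsolution inherited from \eqref{subsolution}. A Liouville-type rigidity theorem for such entire critical-phase solutions then forces $v$ to be quadratic, contradicting the blow-up normalization $|\partial\bar\partial v(0)|=1$. Establishing this rigidity precisely at the critical phase, where the techniques available for the supercritical regime no longer apply directly and the subsolution information carried through the rescaling must be used essentially, is the deepest ingredient and is what distinguishes the critical case from its supercritical analogue.
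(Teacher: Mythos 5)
Your first step---perturbing to $\chi+\varepsilon\omega$, checking that \eqref{subsolution} persists, invoking the supercritical solvability, and getting a uniform $C^0$ bound---is exactly the paper's setup for the family \eqref{LYZt}. The trouble begins with the structure of your a priori estimates, which is circular: you propose to deduce the uniform $C^1$ bound from the $C^0$ and $C^2$ bounds, while your proof of the $C^2$ bound is a blow-up argument that presupposes ``the uniform $C^0$ and $C^1$ bounds.'' Nothing in the proposal breaks this loop (and a direct maximum-principle gradient estimate via $|\nabla u_\varepsilon|^2e^{-A(u_\varepsilon-\underline u)}$ is not available for LYZ/dHYM-type operators; that is precisely why blow-up arguments are used in this literature). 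The paper breaks the loop differently: it first proves a Hou--Ma--Wu type estimate $\sup_M|\sqrt{-1}\partial\bar\partial u^t|_\omega\le C\bigl(1+\sup_M|\nabla u^t|_\omega^2\bigr)$ by a maximum-principle argument in which the subsolution enters through a quantitative version of the Guan--Sz\'ekelyhidi lemma (Lemmas 3.2--3.3), with constants independent of $t$ and, crucially, of $(\theta(t)-(n-2)\frac{\pi}{2})^{-1}$; only then does it run a blow-up, at the \emph{gradient} scale $M_i=\sup_M|\nabla u^{t_i}|$, to obtain the gradient bound.

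The blow-up you describe also does not produce what you claim. If you rescale so that the complex Hessian is normalized to unit size, the special Lagrangian operator degenerates in the limit (the arctangents saturate at $\pm\frac{\pi}{2}$), so the limit does not solve $\sum_i\arctan\lambda_i=(n-2)\frac{\pi}{2}$; conversely, the scaling that preserves the phase equation does not normalize the Hessian, and under the merely $C^{1,\alpha}$ convergence you can extract, the normalization $|\partial\bar\partial v(0)|=1$ is lost. Moreover, the rigidity you invoke---entire $C^{1,1}$ solutions of the critical-phase equation are quadratic---is not known at that regularity and is in any case not the statement the blow-up calls for. In the paper, the gradient-scale blow-up yields a bounded, nonconstant $C^1$, $(n-1)$-subharmonic limit $u^{\infty}$ which, according to $a_0=-\lim_i\tan\hat\theta(t_i)M_i^2$, is plurisubharmonic, or solves $\sigma_{n-1}=0$, or solves $\sigma_n+a_0\sigma_{n-1}=0$ in the sense of currents; the first two cases are handled by known Liouville theorems, and the third requires the paper's new Liouville theorem (Theorem \ref{LiouTh}), proved by lifting $v$ to $v+|z_{n+1}|^2$ on $\mathbb{C}^{n+1}$ and combining current-sense subsolution properties of mollifications with a viscosity comparison principle. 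This uniform Hessian estimate at the critical threshold and the new Liouville theorem are the actual content of the theorem; your outline defers exactly these points and, where it gestures at them, points at the wrong limit equation.
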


The critical and supercritical cases are already known; thus the remaining question is whether singular solutions exist for $\theta<(n-2)\frac\pi 2$. For comparison, in the special Lagrangian equation in \(\mathbb{R}^n\), interior Hessian estimates for critical and supercritical cases were proved by Warren-Yuan \cite{WarrenYuan2010} and Wang-Yuan \cite{WangYuan2014}. For \(\theta < (n-2)\frac{\pi}{2}\), \(C^{1,\alpha}\) solutions were constructed by Nadirashvili-Vladut \cite{NV2010IHP} and Wang-Yuan \cite{wangyuan2013ajm}. Very recently, Mooney-Savin \cite{mooneysavin2023duke} obtained a Lipschitz solution that is not \(C^1\). 

To solve the critical LYZ equation \eqref{LYZ1}, we  consider a family of  equations
\begin{align}\label{LYZt}
	\theta_{\omega}(\chi_{u^{t}}+t\omega)=\theta(t).
\end{align}
We verify that there exists a uniform constant \(C > 0\) such that for small \(t>0\),
\[\theta(t) \in \Bigl((n-2)\frac{\pi}{2} + Ct, (n-2)\frac{\pi}{2} + 2Ct\Bigr)\]
(see Lemma \ref{thetatrange}). Hence each equation in the family \eqref{LYZt} is supercritical. We then prove that the subsolution \(\underline{u}\) of the critical LYZ equation  \eqref{LYZ1}  is also a subsolution of each equation in  \eqref{LYZt}  for small \(t > 0\). Thus we can apply Lin's result \cite{lin2023} (see also Sun \cite{sun2024}) to obtain smooth solutions \(u^t\) to equations \eqref{LYZt} with \(\sup_M u^t = 0\).
To prove Theorem \ref{FYZ}, we establish uniform \(C^{2,\alpha}\) estimates for \(u^t\) that are independent of \(t\), and in particular,  of \((\theta(t) - (n-2)\frac{\pi}{2})^{-1}\).

\begin{theorem}\label{FYZfamily} Assume the same conditions as in Theorem 1.1. Let \(u^t\) be the smooth solution of \eqref{LYZt} with \(\sup_M u^t = 0\). Then there exist uniform constants \(C\) and \(t_0\), depending only on \((M,\omega,\chi)\) and independent of \(t\), such that for any \(t \in (0,t_0)\),
	\begin{equation}
		|u^t|_{C^{2,\alpha}(M)} \le C.
	\end{equation}
\end{theorem}

Before sketching the proof of Theorem \ref{FYZfamily}, it is helpful to recall some important elliptic partial  differential equations related to the LYZ equation.
When $n=2$, the supercritical LYZ equation is equivalent to the complex Monge-Amp\`ere equation 
\begin{align*}
	(\chi_u-\cot\theta\omega)^2=\sin^{-2}\theta\omega^2.
\end{align*}
The complex Monge-Amp\`ere on a compact K\"{a}hler manifold was solved by the second named  author \cite{Yau1976}; the general Hermitian case was  solved by Cherrier \cite{Cherrier1987} and  Guan-Li \cite{GuanLi2010}  partially, and finally by Tosatti-Weinkove \cite{TW2010}. 

When $n=3$, the critical LYZ equation becomes the complex 2-Hessian equation
\begin{align*}
	\sigma_2(\chi_u)=1.
\end{align*}
For the complex $k$-Hessian equation $\sigma_k(\omega_u) = f$ on a compact K\"ahler manifold,
Hou-Ma-Wu \cite{hmw2010} proved the complex Hessian estimate
\begin{align}\label{HMW}
	\sup\limits_{M} |\sqrt{-1}\partial\bar{\partial} u|_{\omega} \le C\bigl(1 + \sup\limits_{M} |\partial u|_{\omega}^2 \bigr)
\end{align}
now known as the Hou-Ma-Wu estimate. They also pointed out that this estimate can be used to obtain the gradient estimate. Later, based on the Hou-Ma-Wu  estimate, 
 Dinew and Ko{\l}odziej, in their breakthrough paper \cite{dk2017ajm}, derived the gradient estimate  by proving a  Liouville-type theorem for the complex Hessian equation \(\sigma_k(\sqrt{-1}\partial\bar\partial u) = 0\) in \(\mathbb{C}^n\), thereby solving the equation $\sigma_k(\omega_u)=f$ on a compact K\"ahler manifold. Sun \cite{sun2017cpam} then solved the equation $\sigma_k(\chi_u)=f$. The Hermitian case was solved independently by Sz\'ekelyhidi \cite{szek2018} and Zhang \cite{zhang2017pjm}.  Sz\'ekelyhidi \cite{szek2018} also derived uniform $C^{2,\alpha}$ estimates for concave  fully nonlinear elliptic partial differential equations. The Hou-Ma-Wu  estimate and the Dinew-Ko{\l}odziej Liouville-type argument have been applied to several important geometric partial differential equations, such as the Gauduchon conjecture which was solved  by Tosatti-Weinkove and Sz\'{e}kelyhidi-Tosatti-Weinkove in a series of significant papers \cite{TW2017, TW2019, STW2018}.
 
 When \(n=4\), the critical LYZ equation is the Hessian quotient equation
 \[\sigma_3(\chi_u) = \sigma_1(\chi_u).\]
 More generally, the Hessian quotient equation is of the form \(\sigma_k(\chi_u) = c\sigma_l(\chi_u)\) with \(0 \leq l < k \leq n\). It reduces to the \(J\)-equation when \(k = n\) and \(l = n-1\), which was solved by Song-Weinkove \cite{songweinkove2008} using the flow method; the case \(k = n, 1 \leq l \leq n-1\) was solved by Fang-Lai-Ma  \cite{flm2011}. The general case was subsequently solved by Sun \cite{sun2017cpam} and Sz\'{e}kelyhidi \cite{szek2018}  via different continuity methods.
 
Now we  briefly outline the proof of Theorem \ref{FYZfamily}. 
The critical LYZ equation has the form 
\[
\sigma_{n-1}(\chi_u) - \sigma_{n-3}(\chi_u) + \sigma_{n-5}(\chi_u) - \cdots = 0.
\]
Compared to the supercritical LYZ equation and the complex Hessian equation, the critical LYZ equation lacks concavity.  To overcome this, we 
introduce  the family of equations (\ref{LYZt}) which  can be written as
\[
\begin{aligned}
&\sigma_{n-1}(\chi_{u^t}+t\omega) - \sigma_{n-3}(\chi_{u^t}+t\omega) + \cdots\\
=&\tan\Bigl(\theta(t)-(n-2)\frac\pi 2\Bigr)(\sigma_{n}(\chi_{u^t}+t\omega) - \sigma_{n-2}(\chi_{u^t}+t\omega) +  \cdots).
\end{aligned}
\]
According to Collins-Jacob-Yau \cite{cjy2020},  we have a uniform $C^0$ estimate independent of $t$; moreover, uniform $C^{2,\alpha}$ estimates follow if one has uniform complex Hessian estimates. However, in \cite{cjy2020} the gradient estimate and the Hou-Ma-Wu complex Hessian estimate depend on $|\cot(\theta(t)-(n-2)\frac\pi 2)|\sim \frac{1}{t}$. Our main contribution is to prove the uniform complex Hessian estimate \eqref{HMW} and the uniform gradient estimate via the Dinew-Ko{\l}odziej Liouville-type argument.

To derive the estimate \eqref{HMW}, we adapt the auxiliary function of Hou-Ma-Wu \cite{hmw2010}. 
Using the subsolution condition and the convexity of the level sets of the Lagrangian phase operator 
(see Section 2 for details), we derive a key inequality independent of \(t\) (see Lemma \ref{keylemma}), 
which then yields the uniform complex Hessian estimate \eqref{HMW}.

For the gradient estimate, the situation is more subtle than the previous works. If the gradient estimate fails, we obtain a  non-constant $C^{0,1}$  function $v$ on \(\mathbb{C}^n\) that, in the sense of currents, is a weak solution to one of the following equations:
\[\sigma_n (\sqrt{-1}\partial\bar \partial v)= 0,\quad \sigma_{n-1}( \sqrt{-1}\partial\bar \partial v)= 0,\quad \text{or}\quad \sigma_{n-1}(\sqrt{-1}\partial\bar \partial v) + \sigma_n (\sqrt{-1}\partial\bar \partial v)= 0.\]
The last case is new and requires a new Liouville-type theorem.
\begin{theorem}
	Let $v: \mathbb{C}^n \rightarrow \mathbb{R}$ be a $C^{0,1}$ function that is $(n-1)$-subharmonic and such that $v + |z_{n+1}|^2$ is $n$-subharmonic in $\mathbb{C}^{n+1}$. Assume there exists a positive constant $C$ with $\|v\|_{C^{0,1}(\mathbb{C}^n)} \le C$ and $\Delta v \le C$ in the weak sense (see Remark~\ref{vlaplacebound} for details). If $v$ is a weak solution in the sense of currents and a viscosity supersolution to the equation
	\begin{align}\label{0nn-1equation}
		\sigma_{n-1}(\sqrt{-1}\partial\bar{\partial} v) + \sigma_{n}(\sqrt{-1}\partial\bar{\partial} v) = 0,
	\end{align}
	then $v$ must be a constant.
\end{theorem}
In the proof of this theorem, we follow the Dinew-Ko{\l}odziej Liouville-type argument as in \cite{dk2017ajm,szek2018} and divide the proof into two cases. The first case can be handled similarly. For the second case, since the equation is not homogeneous, we need to show that the standard mollification \([v]_r\) of \(v\) and \(\frac{1}{3}[(v^\epsilon)^2]_r - \tau_0|z|^2\) are still subsolutions, where \(v^\epsilon = [v]_\epsilon\) and \(\tau_0 > 0\) is sufficiently small. The key observation is that a smooth function \(v\) satisfies \(\sigma_{n-1} + \sigma_n = 0\) (resp. \(\geq 0\)) on \(\mathbb{C}^n\) if and only if the function \(v + |z_{n+1}|^2\) satisfies \(\sigma_n = 0\) (resp. \(\geq 0\)) on \(\mathbb{C}^{n+1}\), where \(z_{n+1}\) is a new complex variable. Based on this observation, we employ both potential theory and viscosity solution methods to obtain the required Liouville-type theorem. As a byproduct, the similar Liouville-type theorem also holds for the equation $\sigma_{k}+\sigma_{k-1}=0$ (see the last section for related problems). 

Finally, we find some applications of the main result.  The first consequence is to solve the 2-Hessian equation in dimension 3 under weaker conditions than those required by Sun \cite{sun2017cpam}.

\begin{corollary} Let \((M,\omega)\) be a compact K\"ahler manifold with dimension \(n=3\) and \(\chi\) a closed real \((1,1)\)-form satisfying \(\chi \wedge \omega > 0\) as a positive \((2,2)\)-form and
	\[3\int_M \chi^2 \wedge \omega = \int_M \omega^3,\quad \int_M \chi^3 < 3\int_M \chi \wedge \omega^2.\]
	Then there exists a unique smooth solution \(u\)  with $\sup_M u = 0$ solving
	\[\sigma_2(\chi_u) = 1,\quad \chi_u \in \Gamma_2(M).\]
\end{corollary}

Another consequence is to solve the Hessian quotient equation in dimension 4 under weaker conditions than those by Sun \cite{sun2017cpam} and Sz\'{e}kelyhidi \cite{szek2018}.

\begin{corollary}\label{4dHQ} Let \((M,\omega)\) be a compact K\"ahler manifold with dimension \(n=4\). Let \(\chi\) be a closed real \((1,1)\)-form satisfying \(3\chi^2 \wedge \omega - \omega^3 > 0\) as a positive \((3,3)\)-form and
	\[\int_M \chi^3 \wedge \omega = \int_M \chi \wedge \omega^3,\quad \int_M \chi \wedge \omega^3 > 0,\quad \operatorname{Re}\int_M (\chi + \sqrt{-1}\omega)^4 < 0.\]
	Then there exists a unique smooth function \(u\) with $\sup_{M} u=0$ solving
	\begin{align*}
		\sigma_3(\chi_u)=\sigma_1(\chi_u),\quad \chi_u\in \Gamma_3(M).
	\end{align*}
\end{corollary}

The organization of this paper is as follows. In Section 2, we provide preliminaries on \(k\)-Hessian operators, the special Lagrangian operator \(\theta_\omega\), and the approximating family \(\theta(t)\). We also give the \(C^0\)-estimate for \(u^t\) using results from \cite{cjy2020}. In Section 3, we first prove a key inequality implied by the subsolution condition, then establish the uniform complex Hessian estimate of Hou-Ma-Wu type which is independent of \((\theta(t) - (n-2)\frac{\pi}{2})^{-1}\). In Section 4, based on the complex Hessian estimate, we prove the Liouville-type theorem, obtain the gradient estimate by the Dinew-Ko{\l}odziej Liouville-type argument, and then prove the main theorems. In Section 5, we apply the main result to the 3-dimensional 2-Hessian equation and the 4-dimensional Hessian quotient equation. In Section 6, we generalize the Liouville-type theorem and raise two questions.

\section{preliminaries }
We first give the definition of the $k$-Hessian operator.
For an $n$-tuple $\lambda = (\lambda_1, \dots, \lambda_n) \in \mathbb{R}^n$ and $1 \le k \le n$, we define
\begin{align}
    \sigma_{k}(\lambda) = \sum_{1 \le i_1 < \cdots < i_{k} \le n} \lambda_{i_1} \cdots \lambda_{i_k}.
\end{align}
For a real $(1,1)$-form $\alpha$, the $k$-Hessian operator of $\alpha$ with respect to the K\"ahler form $\omega$ is defined by
\begin{align}\label{kHessian}
    \sigma_{k}(\alpha) := C_{n}^{k} \frac{\alpha^k \wedge \omega^{n-k}}{\omega^n}.
\end{align}
In local coordinates, writing $\alpha = \sqrt{-1} \alpha_{i\bar j} \, dz_i \wedge d\bar z_j$ and $\omega = \sqrt{-1} g_{i\bar j} \, dz_i \wedge d\bar z_j$, we have
\[
\sigma_k(\alpha) = \sigma_k\!\bigl(\lambda(g^{j\bar l} \alpha_{i\bar l})\bigr).
\]
For simplicity, we also denote $\sigma_k\!\bigl(\lambda(g^{j\bar l} \alpha_{i\bar l})\bigr)$ by $\sigma_k(g^{j\bar l} \alpha_{i\bar l})$.

We need the following convex cones:
\begin{align*}
    \Gamma_{k} &:= \{\lambda \in \mathbb{R}^n \mid \sigma_{m}(\lambda) > 0,\ 1 \le m \le k\}, \\
    \Gamma_{k}(M) &:= \{\alpha \mid \alpha\ \text{is a smooth real $(1,1)$-form on $M$ with } \sigma_{m}(\alpha) > 0,\ 1 \le m \le k \}, \\
    \mathcal{M}(\Gamma_{k}) &:= \{A \mid \text{$A$ is an $n \times n$ Hermitian matrix and } \lambda(A) \in \Gamma_{k} \}.
\end{align*}
The Hessian operators possess the following concavity property, which plays an important role in the proofs of the Liouville-type theorem and the gradient estimate in Section 4.
\begin{lemma}\label{concave}{(G\r{a}rding \cite{Garding1959}, Huisken-Sinestrari \cite{HuiskenSinestrari1999acta})}
    $\frac{\sigma_{k}(\lambda)}{\sigma_{k-1}(\lambda)}$ and $\sigma_{k-1}^{\frac{1}{k-1}}(\lambda)$ are concave in  $\Gamma_{k-1}$ with $2\le k\le n$. Moreover, $\frac{\sigma_{k}(A)}{\sigma_{k-1}(A)}$ and  $\sigma_{k-1}^{\frac{1}{k-1}}(A)$ are concave in $\mathcal{M}(\Gamma_{k-1})$.
\end{lemma}
The concavity of $\sigma_{k-1}^{1/(k-1)}$ in $\Gamma_{k-1}$ was proved by G\r{a}rding \cite{Garding1959}; the concavity of $\frac{\sigma_{k}}{\sigma_{k-1}}$ in $\Gamma_{k-1}$ was proved by Huisken--Sinestrari \cite{HuiskenSinestrari1999acta}.

Consider the Lagrangian phase operator
\begin{align}\label{070601}
    \theta(\lambda) := \sum_{i=1}^n \arctan\lambda_i \quad \text{for } \lambda = (\lambda_1, \dots, \lambda_n) \in \mathbb{R}^n,
\end{align}
and its level set
\begin{align*}
    \Gamma^{\tau} := \{\lambda \in \mathbb{R}^n \mid \theta(\lambda) > \tau\} \subset \mathbb{R}^n
    \quad \text{for } \tau \in \bigl[(n-2)\tfrac{\pi}{2}, n\tfrac{\pi}{2}\bigr).
\end{align*}
The following lemma collects some key properties of the Lagrangian phase operator due to Yuan \cite{yuan2006pams} and Wang-Yuan \cite{WangYuan2014}. 
\begin{lemma}[Yuan \cite{yuan2006pams}, Wang-Yuan \cite{WangYuan2014}]\label{yuanlemma}
    Assume that \(\theta(\lambda) \geq \tau\) for some \(\tau \in [(n-2)\frac{\pi}{2}, n\frac{\pi}{2})\), and that the components of \(\lambda = (\lambda_1, \dots, \lambda_n)\) satisfy \(\lambda_1 \ge \lambda_2 \ge \cdots \ge \lambda_n\). Then the following hold:
    \begin{enumerate}[(i)]
        \item \(\lambda_1 \ge \lambda_2 \ge \cdots \ge \lambda_{n-1} > 0\) and \(\lambda_{n-1} \ge |\lambda_n|\);
        \item \(\lambda_1 + (n-1)\lambda_n \ge 0\);
        \item \(\lambda \in \Gamma_{n-1}\);
        \item \(\Gamma^{\tau}\) is convex.
    \end{enumerate}
\end{lemma}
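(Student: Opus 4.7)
My plan is to treat (1) and (2) as elementary consequences of $\arctan$ estimates, then extract a single key inequality that drives both (3) and (4). For (1): since $\arctan\lambda_i<\pi/2$, if $\lambda_{n-1}\le 0$ then $\arctan\lambda_{n-1}+\arctan\lambda_n\le 0$ and so $\theta(\lambda)<(n-2)\pi/2\le\tau$, a contradiction; hence $\lambda_{n-1}>0$. When $\lambda_n<0$, one has $\arctan\lambda_{n-1}+\arctan\lambda_n=\theta(\lambda)-\sum_{i=1}^{n-2}\arctan\lambda_i>0$, which forces $\lambda_{n-1}>|\lambda_n|$. For (2) in the non-trivial case $\lambda_n<0$, the bound $\sum_{i=2}^{n-1}\arctan\lambda_i\le(n-2)\arctan\lambda_1$ combined with $\theta(\lambda)\ge\tau$ yields $\arctan\lambda_1\ge[(n-2)\pi/2+\arctan|\lambda_n|]/(n-1)$, and the function $g(t)=(n-2)\pi/2+\arctan t-(n-1)\arctan((n-1)t)$ satisfies $g(0)>0$, $\lim_{t\to\infty}g(t)=0$, $g'(t)<0$, so $g\ge 0$ and $\lambda_1\ge(n-1)|\lambda_n|$.

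The common technical core of (3) and (4) is the strict estimate
\[
\sum_{i=1}^{n-1}\frac{1}{\lambda_i}<\frac{1}{|\lambda_n|}\qquad(\lambda_n<0).
\]
I would prove it via the reciprocal identity $\theta(\lambda^{-1})=(n-2)\pi/2-\theta(\lambda)$ (with $\lambda^{-1}=(1/\lambda_1,\ldots,1/\lambda_n)$), a direct consequence of $\arctan(1/x)=\operatorname{sgn}(x)\,\pi/2-\arctan x$ combined with the sign pattern from (1). Thus $\theta(\lambda)\ge(n-2)\pi/2$ is equivalent to $\sum_{i<n}\arctan(1/\lambda_i)\le\arctan(1/|\lambda_n|)$. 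Iterating the subadditivity $\arctan a+\arctan b\ge\arctan(a+b)$ for $a,b\ge 0$ (immediate from $f(b)=\arctan a+\arctan b-\arctan(a+b)$ having $f(0)=0$, $f'(b)\ge 0$) gives $\arctan\bigl(\sum_{i<n}1/\lambda_i\bigr)<\sum_{i<n}\arctan(1/\lambda_i)\le\arctan(1/|\lambda_n|)$; monotonicity of $\arctan$ produces the displayed inequality.

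From this key estimate, part (3) follows via the recursion $\sigma_k(\lambda)=\sigma_k'+\lambda_n\sigma_{k-1}'$ with $\sigma_m'=\sigma_m(\lambda_1,\ldots,\lambda_{n-1})$: since these entries are positive by (1), Newton's inequality makes $p_k=\sigma_k'/\binom{n-1}{k}$ log-concave, so $\sigma_k'/\sigma_{k-1}'\ge(n-k)(n-1)/[k\sum_{j<n}1/\lambda_j]$, which together with the key estimate exceeds $|\lambda_n|$ for every $1\le k\le n-1$, yielding $\sigma_k(\lambda)>0$. For (4), computing $\theta_i=1/(1+\lambda_i^2)$ and $\theta_{ii}=-2\lambda_i/(1+\lambda_i^2)^2$, convexity of $\{\theta\ge\tau\}$ reduces through the second-fundamental-form criterion and the change of variable $w_i=\xi_i/(1+\lambda_i^2)$ on tangent vectors $\xi$ to positivity of $\sum_i\lambda_i w_i^2$ on the hyperplane $\{\sum w_i=0\}$; a standard secular-equation analysis identifies PSD of this restricted quadratic form with the condition $\sum_i 1/\lambda_i\le 0$, precisely the key estimate. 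I expect the main obstacle to be exactly the discovery of the reciprocal identity and the right application of $\arctan$-subadditivity; once these are in place, (3) and (4) reduce to routine symmetric-function and quadratic-form manipulations.
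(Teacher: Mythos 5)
The paper does not actually prove this lemma: it is quoted as a known result of Yuan \cite{yuan2006pams} and Wang--Yuan \cite{wangyuan2014ajm}, so there is no internal proof to compare against. Your proposal is, as far as I can check, a correct self-contained argument, and its backbone is sound: (1) and (2) follow from the elementary $\arctan$ bounds exactly as you say (the monotonicity claim $g'<0$ uses $(n-1)^2>1$, i.e. $n\ge 3$, with $g\equiv 0$ when $n=2$); the reciprocal identity $\theta(\lambda^{-1})=(n-2)\tfrac{\pi}{2}-\theta(\lambda)$ is valid precisely because (1) forces the sign pattern $\lambda_1\ge\cdots\ge\lambda_{n-1}>0>\lambda_n$ in the nontrivial case, and strict subadditivity of $\arctan$ on positive arguments then gives $\sum_{i<n}\lambda_i^{-1}<|\lambda_n|^{-1}$ (strictness requires at least two positive entries, i.e. $n\ge 3$, which is the range relevant to the paper); the Newton/Maclaurin bound $\sigma_k'/\sigma_{k-1}'\ge \frac{(n-1)(n-k)}{k}\bigl(\sum_{j<n}\lambda_j^{-1}\bigr)^{-1}$ is correct and, since $\frac{(n-1)(n-k)}{k}\ge 1$ for $k\le n-1$, it does yield $\sigma_k(\lambda)=\sigma_k'+\lambda_n\sigma_{k-1}'>0$; and the reduction of (4) to nonnegativity of $\sum_i\lambda_i w_i^2$ on $\{\sum_i w_i=0\}$, whose failure is equivalent (when $\lambda_n<0$) to $\lambda_n+\bigl(\sum_{i<n}\lambda_i^{-1}\bigr)^{-1}<0$, is exactly the classical computation. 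The one step you gloss over is the passage from the pointwise statement ``the level hypersurface $\{\theta=\tau\}$ has nonnegative second fundamental form with respect to the inward normal'' to the global statement ``$\Gamma^{\tau}$ is convex'': this needs that $\Gamma^{\tau}$ is connected (clear, since $\theta$ is increasing in each variable, so any point can be joined to the diagonal ray) plus a Tietze--Nakajima-type local-to-global convexity argument, or alternatively writing $\partial\Gamma^{\tau}$ as a graph $\lambda_n=\tan\bigl(\tau-\sum_{i<n}\arctan\lambda_i\bigr)$ and checking convexity of that graph function; you should state which route you take, but this is a standard point and not a gap in substance. Compared with simply invoking the references as the paper does, your argument has the merit of deriving (2), (3) and (4) from the single inequality $\sum_i\lambda_i^{-1}\le 0$, which is in the spirit of Yuan's original proof.
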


For the supercritical LYZ equation, Collins, Jacob and Yau \cite{cjy2020} proved that if $\theta(\chi_{\underline u})>(n-2)\frac\pi 2$, the subsolution \eqref{subsolutiontheta} is equivalent to $\mathrm{Re}(\chi_{\underline u}+\sqrt{-1}\omega)^{n-1}-\cot\hat\theta\mathrm{Im}(\chi_{\underline u}+\sqrt{-1}\omega)^{n-1}>0$.
For the critical LYZ equation, we obtain the following equivalent formulation.
\begin{lemma}\label{subequi}
	The subsolution \eqref{subsolution} is equivalent to the following 
	\begin{align}
		&\theta(\chi_{\underline{u}})>\frac{n}{n-1}
	\frac{n-3}{2}\pi,\label{subsum}\\
	&\mathrm{Im}(\chi_{\underline u}+\sqrt{-1}\omega)^{n-1}>0.\label{subn-1}
	\end{align}
\end{lemma}

	\begin{proof}
		Let $\mu=(\mu_1,\cdots,\mu_n)$ be the eigenvalues of $\chi_{\underline{u}}$ with respect to $\omega$, ordered so that $\mu_1\ge \cdots\ge \mu_n$.
		Define
		\[
		\theta_{i}=\mathrm{arccot}\mu_{i}\in (0, \pi),
		\]
		which is the principal argument of $\mu_i+\sqrt{-1}$.
		
 We first prove that
	the subsolution condition implies \eqref{subsum} and \eqref{subn-1}.
		By the subsolution condition, for every $j$,
		\[
		\sum_{\substack{1\le i\le n,\,i\neq j}}\arctan\mu_i>(n-3)\frac{\pi}{2}.
		\]
		Summing these inequalities over $j=1,\dots,n$ yields \eqref{subsum}.
		
		Since $\theta_i=\frac{\pi}{2}-\arctan\mu_i$, the subsolution is equivalent to
		\[
		\sum_{\substack{1\le i\le n,\,i\neq j}}\theta_i<\pi,\quad \forall\ 1\le j\le n.
		\]
	Hence the principal argument of  $\prod_{\substack{1\le i\le n\\ i\neq j}}(\mu_i+\sqrt{-1})$ is $\sum_{\substack{1\le i\le n\\ i\neq j}}\theta_i\in(0,\pi)$, so its imaginary part is positive. This proves \eqref{subn-1}.
		
		Conversely, we show that  conditions \eqref{subsum} and \eqref{subn-1} imply the subsolution \eqref{subsolution}.		
		From  $\theta_1\le\cdots\le\theta_n$ we obtain 
		\[
		\theta_n\ge \frac{1}{n-1}\sum_{i=2}^{n-1}\theta_i.
		\]
		Consequently,
		\[
		\sum_{i=2}^{n-1}\theta_i
		\le \frac{n-1}{n}\sum_{i=2}^{n}\theta_i
		<\frac{n-1}{n}\sum_{i=1}^{n}\theta_i<\pi,
		\]
		where the last inequality follows from \eqref{subsum}.
		Thus
		\[
		\mathrm{Im}\prod_{i=2}^{n-1}(\mu_i+\sqrt{-1})>0.
		\]
		Using \eqref{subn-1} we have
		\[
		0<\mathrm{Im}\prod\limits_{i=2}^{n}(\mu_i+\sqrt{-1})
		=\mu_n \mathrm{Im}\prod_{i=2}^{n-1}(\mu_i+\sqrt{-1})+ \mathrm{Re}\prod_{i=2}^{n-1}(\mu_i+\sqrt{-1}).
		\]
		This implies
		\[
		\cot\Bigl(\sum_{i=2}^{n-1}\theta_i\Bigr)>\cot(\pi-\theta_n).
		\]
		Since $\cot$ is strictly decreasing on $(0,\pi)$ and $\sum\limits_{i=2}^{n-1}\theta_i<\pi$, we deduce
		\[
		\sum_{i=2}^{n-1}\theta_i<\pi-\theta_n
		\quad \text{i.e.} \quad
		\sum_{i=2}^{n}\theta_i<\pi,
		\]
	which proves the subsolution condition for $j=1$, the ordering then gives it for all $j$. 
	\end{proof}
	
Let $\hat \theta(t)$ denote the principal argument of the complex number $\int_M (\chi + t\omega + \sqrt{-1}\omega)^n$, and set $\theta(t) =\frac{n\pi}{2} - \hat \theta(t)$. We will show that $\hat \theta(t)$ is close to $\pi$ when $t>0$ is sufficiently small.
\begin{lemma}\label{thetatrange}
    Assume that $\underline u$ is a subsolution of the critical LYZ equation \eqref{LYZ1}. Then there exist uniform positive constants $t_0$ and $C$, depending only on $\chi$ and $\omega$, such that for all $t \in (0, t_0)$,
    \[
    \hat \theta(t) \in \bigl( \pi - 2Ct,\; \pi - Ct \bigr).
    \]
\end{lemma}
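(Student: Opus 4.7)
The plan is to Taylor-expand $Z(t):=\int_M(\chi+t\omega+\sqrt{-1}\omega)^n$ around $t=0$ and read off the linearization of its principal argument. Binomial expansion gives
\[
Z(t)=\sum_{k=0}^n\binom{n}{k}t^kA_k,\qquad A_k:=\int_M(\chi+\sqrt{-1}\omega)^{n-k}\wedge\omega^k,
\]
and the hypothesis says that $A_0=Z(0)$ is a negative real number. Consequently $\mathrm{Re}\,Z(t)=A_0+O(t)$ stays negative and bounded away from zero for small $t$, while $\mathrm{Im}\,Z(t)=nt\,\mathrm{Im}(A_1)+O(t^2)$. Everything reduces to showing $\mathrm{Im}(A_1)>0$; the two-sided estimate on $\hat\theta(t)$ then drops out of an elementary arctangent computation.

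The key step, and the only place the subsolution $\underline u$ enters, is this positivity. Since $A_1$ is cohomological I may replace $\chi$ by $\chi_{\underline u}$. In a local unitary frame diagonalizing $\chi_{\underline u}$ with eigenvalues $\lambda_j=\lambda_j(\chi_{\underline u})$,
\[
\frac{(\chi_{\underline u}+\sqrt{-1}\omega)^{n-1}\wedge\omega}{\omega^n}=\frac{1}{n}\sum_{k=1}^n\prod_{j\neq k}(\lambda_j+\sqrt{-1}).
\]
Each factor $\lambda_j+\sqrt{-1}$ has argument $\tfrac{\pi}{2}-\arctan\lambda_j\in(0,\pi)$, so
\[
\arg\prod_{j\neq k}(\lambda_j+\sqrt{-1})=(n-1)\frac{\pi}{2}-\sum_{j\neq k}\arctan\lambda_j.
\]
The subsolution condition \eqref{subsolution} supplies the lower bound $\sum_{j\neq k}\arctan\lambda_j>(n-3)\pi/2$, while smoothness of $\underline u$ on compact $M$ forces $\arctan\lambda_j<\pi/2$ uniformly, hence $\sum_{j\neq k}\arctan\lambda_j<(n-1)\pi/2$ with a uniform gap. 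Thus the displayed argument lies in a compact subinterval of $(0,\pi)$, uniformly on $M$, so the imaginary part of each summand is a strictly positive continuous function on $M$; integrating against $\omega^n$ yields $\mathrm{Im}(A_1)>0$.

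To finish, set $C_1:=n\,\mathrm{Im}(A_1)/|A_0|>0$. For small $t>0$ the complex number $Z(t)$ sits in the open second quadrant just above the negative real axis, so $\hat\theta(t)<\pi$ and
\[
\pi-\hat\theta(t)=\arctan\!\left(\frac{-\mathrm{Im}\,Z(t)}{\mathrm{Re}\,Z(t)}\right)=C_1 t+O(t^2).
\]
Choosing $t_0>0$ so small that $|O(t^2)|<C_1 t/3$ for $t\in(0,t_0)$, and taking $C:=2C_1/3$, gives $\pi-\hat\theta(t)\in(Ct,2Ct)$ as claimed. The only non-routine ingredient is the pointwise argument identification in the middle paragraph, where the strict subsolution inequality combines with the trivial bound $\arctan\lambda_j<\pi/2$ to place the relevant argument strictly inside $(0,\pi)$; the remainder is a straightforward linearization.
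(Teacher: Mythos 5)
Your proof is correct and follows essentially the same route as the paper: expand $Z(t)$ to first order in $t$, note that $Z(0)$ is real and negative, and use the subsolution condition to show the $t$-linear term has positive imaginary part. You supply a fuller justification of the key positivity $\mathrm{Im}\int_M(\chi_{\underline u}+\sqrt{-1}\omega)^{n-1}\wedge\omega>0$ via the eigenvalue identity $\frac{(\chi_{\underline u}+\sqrt{-1}\omega)^{n-1}\wedge\omega}{\omega^n}=\frac{1}{n}\sum_k\prod_{j\neq k}(\lambda_j+\sqrt{-1})$ and the argument computation, where the paper simply asserts the equivalent $(n-1,n-1)$-form positivity $\mathrm{Im}(\chi_{\underline u}+\sqrt{-1}\omega)^{n-1}>0$.
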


\begin{proof}
    By a direct computation, we have
    \[
    (\chi + t\omega + \sqrt{-1}\omega)^n = (\chi + \sqrt{-1}\omega)^n 
    + t\, n (\chi + \sqrt{-1}\omega)^{n-1} \wedge \omega + O(t^2).
    \]
    Since \( \mathrm{Re}\int_M (\chi + \sqrt{-1}\omega)^n < 0\), it follows that, for sufficiently small \(t\), there exists a constant $C_0>0$ such that 
    \[
    \mathrm{Re}\int_M (\chi + t\omega + \sqrt{-1}\omega)^n 
    < \mathrm{Re}\int_M (\chi + \sqrt{-1}\omega)^n + C_0t < 0.
    \]

    We now show that the imaginary part is positive. The subsolution condition implies the positivity
    \[
    \mathrm{Im}\, (\chi_{\underline{u}} + \sqrt{-1}\omega)^{n-1} > 0.
    \]
    Hence,
    \begin{align*}
        \mathrm{Im}\int_M (\chi + \sqrt{-1}\omega)^{n-1} \wedge \omega 
        = \mathrm{Im}\int_M (\chi_{\underline{u}} + \sqrt{-1}\omega)^{n-1} \wedge \omega > 0.
    \end{align*}
    Consequently, for $t$ small enough
    \begin{align*}
        \mathrm{Im}\int_M (\chi + t\omega + \sqrt{-1}\omega)^n 
        &= t\, \mathrm{Im}\int_M (\chi + \sqrt{-1}\omega)^{n-1} \wedge \omega + O(t^2) > 0.
    \end{align*}

    Therefore, there exists a uniform constant \(C > 0\), depending only on \(\chi\) and \(\omega\), such that the principal argument \(\hat\theta(t)\) of \(\int_M (\chi + t\omega + \sqrt{-1}\omega)^n\) satisfies
    \[
    \hat\theta(t) = \pi - \arctan\!\Bigl( 
        \frac{t\, \mathrm{Im}\int_M (\chi + \sqrt{-1}\omega)^{n-1} \wedge \omega + O(t^2)}
        {-\mathrm{Re}\int_M (\chi + \sqrt{-1}\omega)^n + O(t)} 
    \Bigr) \in (\pi - 2Ct,\; \pi - Ct)
    \]
    for all sufficiently small \(t > 0\).
\end{proof}

Hence, for sufficiently small $t>0$, Lemma \ref{thetatrange} together with the definition 
$\theta(t)=\frac{n\pi}2-\hat\theta(t)$ gives
\begin{align}\label{thetat}
    \theta(t) \in \bigl((n-2)\tfrac{\pi}{2} + Ct,\; (n-2)\tfrac{\pi}{2} + 2Ct \bigr),
\end{align}
and consequently, 
\begin{align*}
    \min_{1\le j\le n} \sum_{\substack{1\le i\le n,\,i\neq j}} \arctan\lambda_i(\chi_{\underline{u}} + t\omega)
    > (n-3)\frac{\pi}{2} + 2Ct > \theta(t) - \frac{\pi}{2}.
\end{align*}
This shows that $\underline{u}$ remains a subsolution of equation \eqref{LYZt} for all sufficiently small $t>0$.

By the result of Lin \cite{lin2023} (see also Sun \cite{sun2024}), for each such $t$, there exists a smooth solution $u^{t}$ to the supercritical LYZ equation \eqref{LYZt}. Moreover, Proposition 3.6 of \cite{cjy2020} provides a uniform $C^0$ estimate for these solutions.

\begin{lemma}[Collins-Jacob-Yau \cite{cjy2020}]
    Let $\underline{u}$ be a subsolution of the critical LYZ equation \eqref{LYZ1}, and let $u^t$ be the solution of \eqref{LYZt} with $\sup_M u^t = 0$. Then there exist constants $t_0>0$ and $C>0$, depending only on the background data $(\chi,\omega)$ and $\underline{u}$, such that for all $t \in (0, t_0)$,
    \[
    \| u^{t} \|_{C^0(M)} \le C.
    \]
\end{lemma}

 \section{Complex Hessian estimates}

We first collect some useful formulas for the complex Hessian estimate from \cite{fyz2024}.
\begin{lemma}
    Let $u^{t}$ be a solution of the LYZ equation \eqref{LYZt}. Then:
    \begin{enumerate}
        \item[(i)] The linearized operator $\mathcal{L}$ of \eqref{LYZt} takes the form
        \[
        \mathcal{L}(v) = \sum_{i,j}F^{i\bar{j}} v_{i\bar{j}},
        \]
        where the matrix $(F^{i\bar{j}})$ is the inverse of $w g^{-1} w + g$. Here $g = (g_{i\bar{j}})$ and $w = (w_{i\bar{j}})$ with $w_{i\bar{j}} = \chi_{i\bar{j}} + t g_{i\bar{j}} + u^{t}_{i\bar{j}}$.
        
        \item[(ii)] Differentiating the LYZ equation \eqref{LYZt} (using covariant derivatives) yields
        \begin{align}
            &\sum_{i, j}F^{i\bar{j}} w_{i\bar{j}, p} = 0, \label{differentiate1} \\
            &\sum_{i, j}F^{i\bar{j}} w_{i\bar{j}, p\bar{p}} =\sum_{i, j, k, l} F^{i\bar{l}} F^{k\bar{j}} w_{i\bar{j}, p} \bigl( w_{k\bar{m}, \bar{p}} g^{r\bar{m}} w_{r\bar{l}} + w_{k\bar{m}} g^{r\bar{m}} w_{r\bar{l}, \bar{p}} \bigr). \label{differentiate2}
        \end{align}
    \end{enumerate}
\end{lemma} 	
  	
The subsolution condition plays a crucial role in the $C^2$-estimates for solutions of fully nonlinear elliptic s, as demonstrated in the works of Guan \cite{guan2014} and Sz\'ekelyhidi \cite{szek2018}. Following the approach in these references, we establish key inequalities that will be essential for the complex Hessian estimate. A favorable feature is that the constant $\delta_0$ appearing below can be chosen to be uniform.

\begin{lemma}\label{keylemma}
	Let $\mu \in \mathbb{R}^n$ satisfy
	\[
	A(\mu) := \min_{1 \le j \le n} \sum_{i \neq j} \arctan \mu_i \; > \; A_0 \; > \; (n-3)\frac{\pi}{2}.
	\]
	Then, for every $\lambda \in \mathbb{R}^n$ with $\theta(\lambda) = \theta(t)$, there exist positive constants $t_0$ and $\delta_0$, depending only on $n$, $\mu$, $A_0$, and the constant $C$ from Lemma \ref{thetatrange}, such that for any $t \in (0, t_0)$, one of the following statements holds:
	\begin{enumerate}[(i)]
		\item $\displaystyle\sum_{i=1}^n \frac{\mu_i - \lambda_i}{1+\lambda_i^2} \ge \delta_0 \sum_{i=1}^n \frac{1}{1+\lambda_i^2}$;\quad or
		\item 
		 $\displaystyle\frac{1}{1+\lambda_j^2} \ge \delta_0 \sum_{i=1}^n \frac{1}{1+\lambda_i^2}$ for all $j =1,\dots,n$.
	\end{enumerate}
\end{lemma}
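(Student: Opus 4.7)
The lemma is a Guan--Sz\'ekelyhidi type subsolution dichotomy adapted to the critical phase. The plan is to exploit the \emph{strict} gap $\eta := A_0 - (n-3)\frac{\pi}{2} > 0$ provided by the hypothesis, and to replace the usual concavity-of-$\theta$ argument (which degenerates at the critical phase under the reformulation $-\cot(\theta - (n-2)\pi/2)$) with a direct supporting hyperplane estimate based on the convexity of the super-level sets $\Gamma^\tau$ from Yuan's Lemma \ref{yuanlemma}(4). First I would fix $t_0$ so that $2Ct_0 < \eta/2$ (so the subsolution margin survives uniformly), and for each $j\in\{1,\dots,n\}$ form the replacement vector
\[
\tilde\mu^{(j)} := (\mu_1,\ldots,\mu_{j-1},\,K,\,\mu_{j+1},\ldots,\mu_n),
\]
with $K = K(\eta)$ chosen large enough that $\arctan K > \tfrac{\pi}{2} - \tfrac{\eta}{8}$. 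Combining this choice with the hypothesis $\sum_{i\ne j}\arctan\mu_i > (n-3)\tfrac{\pi}{2}+\eta$ and Lemma \ref{thetatrange}, I obtain $\theta(\tilde\mu^{(j)}) \ge \theta(t) + \delta_2$ for a uniform constant $\delta_2 = \delta_2(\eta)>0$.

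The next step is the key geometric estimate. Since $\theta$ is $1$-Lipschitz in each variable, the closed Euclidean ball $\overline{B_r(\tilde\mu^{(j)})}$ of radius $r := \delta_2/\sqrt{n}$ is contained in $\overline{\Gamma^{\theta(t)}}$. Both $\lambda$ (on the boundary $\{\theta = \theta(t)\}$) and this ball lie in the closed convex set $\overline{\Gamma^{\theta(t)}}$, and $\nabla\theta(\lambda) = \bigl(\tfrac{1}{1+\lambda_i^2}\bigr)_{i}$ is the inward normal at $\lambda$. Applying the supporting hyperplane inequality at $\lambda$ to the point $\tilde\mu^{(j)} - r\,\nabla\theta(\lambda)/|\nabla\theta(\lambda)|$ (still in $\overline{\Gamma^{\theta(t)}}$) then yields
\[
\nabla\theta(\lambda)\cdot(\tilde\mu^{(j)} - \lambda) \;\ge\; r\,|\nabla\theta(\lambda)| \;\ge\; \frac{\delta_2}{n}\,S,
\]
where $S := \sum_i \tfrac{1}{1+\lambda_i^2}$ and the last step is Cauchy--Schwarz. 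Isolating the $j$-th term gives the central inequality
\[
\sum_{i=1}^n\frac{\mu_i-\lambda_i}{1+\lambda_i^2} \;\ge\; \frac{\delta_2}{n}\,S \;-\; \frac{K-\mu_j}{1+\lambda_j^2}, \qquad j=1,\ldots,n.
\]

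The argument finishes by dichotomy: if (ii) holds we are done, so suppose some $j_0$ violates it, i.e. $\tfrac{1}{1+\lambda_{j_0}^2} < \delta_0 S$. Substituting $j = j_0$ into the central inequality bounds the error term by $(K + \max_i|\mu_i|)\,\delta_0\,S$, which, for $\delta_0$ small enough depending only on $n,\mu,\eta$, is absorbed into $\tfrac{\delta_2}{2n}\,S$; this yields (i) with the same $\delta_0$. \emph{The main obstacle} is the uniformity as $\theta(t)\searrow (n-2)\tfrac{\pi}{2}$: the standard concavity-based proof applied to $-\cot(\theta-(n-2)\pi/2)$ produces constants that degenerate, because the $\csc^2$ factor in its gradient blows up, so bypassing concavity by the supporting hyperplane argument above is essential. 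The strict margin $\eta>0$ is precisely what lets us insert a uniform ball of radius $r$ inside $\Gamma^{\theta(t)}$, and this is the reason for the uniform $\delta_0$ advertised in the statement.
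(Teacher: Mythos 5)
Your proof is correct, and it takes a genuinely different route from the paper's, though both live in the same Guan--Sz\'ekelyhidi family and share the two essential inputs: the convexity of $\Gamma^{\theta(t)}$ with inward normal $\mathbf{n}=\bigl(\tfrac{1}{1+\lambda_i^2}\bigr)_i$ at $\lambda$, and the uniform margin $\theta(t)<(n-2)\tfrac{\pi}{2}+2Ct_0<A_0+\tfrac{\pi}{2}$ coming from Lemma \ref{thetatrange}. The paper splits into cases according to the sign of $\bigl((\mu-\delta\mathbf{1})-\lambda\bigr)\cdot\mathbf{n}$: the nonnegative case gives (i) immediately, while in the other case it perturbs $\mu-\delta\mathbf{1}$ along the explicit directions $e_1$ and $e_n$, uses the tangent plane $T_\lambda$ and convexity to place two auxiliary points on opposite sides, and deduces $\tfrac{1}{1+\lambda_1^2}\ge \tfrac{\delta}{s}\tfrac{1}{1+\lambda_n^2}$, which together with the ordering $\lambda_1\ge\cdots\ge\lambda_{n-1}\ge|\lambda_n|$ from Lemma \ref{yuanlemma}(1) yields (ii) for all $j$. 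You instead argue by contraposition on (ii): replacing the $j$-th entry of $\mu$ by a large $K$ (in the spirit of Sz\'ekelyhidi's limiting formulation of subsolutions), you show $\tilde\mu^{(j)}$ sits a uniform distance inside $\overline{\Gamma^{\theta(t)}}$ via the $\sqrt{n}$-Lipschitz bound on $\theta$, apply the supporting hyperplane inequality quantitatively to get $\mathbf{n}\cdot(\tilde\mu^{(j)}-\lambda)\ge\tfrac{\delta_2}{n}S$, and absorb the single error term $\tfrac{K-\mu_{j_0}}{1+\lambda_{j_0}^2}\le (K+\max_i|\mu_i|)\delta_0 S$ when (ii) fails at $j_0$, obtaining (i). Your version buys a shorter, fully quantitative argument that never uses the eigenvalue ordering facts of Lemma \ref{yuanlemma}(1); the paper's version buys a cleaner geometric dichotomy in which the second case delivers (ii) directly with the explicit constant $\delta_0=\tfrac{\delta}{ns}$. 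Both produce constants depending only on $n,\mu,A_0,C$ and independent of $(\theta(t)-(n-2)\tfrac{\pi}{2})^{-1}$, which is the point of the lemma.
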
  

\begin{proof}
Given that $A(\mu) > A_0 > (n-3)\frac{\pi}{2}$, we can choose a small positive constant $\delta > 0$ such that 
$$A(\mu - 2\delta \mathbf{1}) > \frac{A_0 + (n-3)\frac{\pi}{2}}{2},$$
where $\mathbf{1} = (1, \ldots, 1) \in \mathbb{R}^n$.

For $\lambda \in \mathbb{R}^n$ satisfying $\theta(\lambda) = \theta(t)$, assume without loss of generality that $\lambda_1 \ge \cdots \ge \lambda_n$. Recall that the vector 
$$\mathbf{n} = \Bigl( \frac{1}{1+\lambda_1^2}, \cdots, \frac{1}{1+\lambda_n^2} \Bigr)$$
is an inward normal to the hypersurface $\partial\Gamma^{\theta(t)}$ at $\lambda$. Let $T_{\lambda}$ denote the tangent plane to  $\partial\Gamma^{\theta(t)}$ at $\lambda$.

\textbf{Case 1:} $\bigl((\mu - \delta\mathbf{1}) - \lambda\bigr) \cdot \mathbf{n} \geq 0$.
In this case, we have
\begin{align}\label{case1}
\sum_{i=1}^n \frac{\mu_i - \lambda_i}{1 + \lambda_i^2} \ge \delta \sum_{i=1}^n \frac{1}{1 + \lambda_i^2}.
\end{align}

\textbf{Case 2:} $\bigl((\mu - \delta\mathbf{1}) - \lambda\bigr) \cdot \mathbf{n} < 0$.
We claim that there exists a positive constant $s$ such that $\mu - \delta\mathbf{1} + s e_1 \in \Gamma^{\theta(t)}$.
Indeed, observe that
\begin{align*}
\theta(\mu - \delta\mathbf{1} + s e_1)
&= \arctan (\mu_1 - \delta + s) + \sum_{i=2}^n \arctan (\mu_i - \delta) \\
&> \arctan (\mu_1 - \delta + s) + A(\mu - \delta\mathbf{1}) \\
&\ge \arctan (\mu_1 - \delta + s) + \frac{1}{2}\Bigl(A_0 + (n-3)\frac{\pi}{2}\Bigr).
\end{align*}
To ensure $\theta(\mu - \delta\mathbf{1} + s e_1) > \theta(t)$, it suffices to choose $s$ satisfying
\[
\arctan (\mu_1 - \delta + s) > \theta(t) - \frac{1}{2}\Bigl(A_0 + (n-3)\frac{\pi}{2}\Bigr).
\]
Take
\[
s = |\delta - \mu_1| + \tan\Bigl(\frac{\pi}{2} - \frac{1}{4}\bigl(A_0 - (n-3)\frac{\pi}{2}\bigr)\Bigr),
\]
and assume $0 < t < t_0$ with $t_0 = \frac{1}{8C}\bigl(A_0 - (n-3)\frac{\pi}{2}\bigr)$. Then the desired inequality holds.

The convexity of $\Gamma^{\theta(t)}$ implies that the points $\mu - \delta\mathbf{1}$ and $\mu - \delta\mathbf{1} + s \mathbf{e}_1$ lie on opposite sides of the tangent plane $T_{\lambda}$.
Hence, there exists $a_0 \in (0, 1)$ such that the convex combination
\[
a_0(\mu - \delta\mathbf{1} + s \mathbf{e}_1) + (1-a_0)(\mu - \delta\mathbf{1}) = \mu - \delta\mathbf{1} + a_0 s \mathbf{e}_1
\]
lies on $T_{\lambda}$.

We now show that $\mu - \delta\mathbf{1} + a_0 s \mathbf{e}_1 + s \mathbf{e}_1 - \delta \mathbf{e}_n \in \Gamma^{\theta(t)}$. Indeed,
\begin{align*}
&\theta\bigl(\mu - \delta\mathbf{1} + a_0 s \mathbf{e}_1 + s \mathbf{e}_1 - \delta \mathbf{e}_n\bigr) \\
&= \arctan (\mu_1 - \delta + a_0 s + s) + \sum_{i=2}^{n-1} \arctan (\mu_i - \delta) + \arctan (\mu_n - 2\delta) \\
&\ge \arctan (\mu_1 - \delta + a_0 s + s) + A(\mu - 2\delta\mathbf{1}) \\
&\ge \arctan (\mu_1 - \delta + s) + \frac{1}{2}\Bigl(A_0 + (n-3)\frac{\pi}{2}\Bigr) \\
&> \theta(t).
\end{align*}
Thus, we have
\begin{align*}
0 &\le \bigl((\mu - \delta\mathbf{1} + a_0 s \mathbf{e}_1 + s \mathbf{e}_1 - \delta \mathbf{e}_n) - (\mu - \delta\mathbf{1} + a_0 s \mathbf{e}_1)\bigr) \cdot \mathbf{n} \\
&= (s \mathbf{e}_1 - \delta \mathbf{e}_n) \cdot \mathbf{n} \\
&= \frac{s}{1+\lambda_1^2} - \frac{\delta}{1+\lambda_n^2},
\end{align*}
which is equivalent to
\begin{equation}\label{eq:ineq_lambda1_lambdan}
\frac{1}{1+\lambda_1^2} \ge \frac{\delta}{s(1+\lambda_n^2)}.
\end{equation}

By Lemma \ref{yuanlemma}, we have $\lambda_1 \ge \cdots \ge \lambda_{n-1} \ge |\lambda_n|$. Consequently, for each $j = 1, \dots, n$,
\[
\frac{1}{1+\lambda_n^2} \ge \frac{1}{1+\lambda_j^2}.
\]
 Combining with \eqref{eq:ineq_lambda1_lambdan}, we obtain 
\[
\frac{1}{1+\lambda_1^2} \ge \frac{\delta}{s(1+\lambda_j^2)}, \quad \textup{for all $1\leq j\leq n$.}
\]
Summing over $j = 1, \dots, n$ yields
\[
\frac{n}{1+\lambda_1^2} \ge \frac{\delta}{s} \sum_{j=1}^n \frac{1}{1+\lambda_j^2},
\]
and therefore
\[
\frac{1}{1+\lambda_1^2} \ge \frac{\delta}{n s} \sum_{j=1}^n \frac{1}{1+\lambda_j^2}.
\]
Since $\lambda_1^2 \ge \lambda_j^2$ for all $j$, we have 
\begin{align}\label{case2}
\frac{1}{1+\lambda_j^2} \ge \frac{1}{1+\lambda_1^2} \ge \frac{\delta}{n s} \sum_{i=1}^n \frac{1}{1+\lambda_i^2}.
\end{align}

In view of \eqref{case1} and \eqref{case2}, choosing $\delta_0 = \frac{\delta}{n s}$ completes the proof.
\end{proof}

By the preceding lemma, we obtain the following key result, noting that the estimate does not depend on $(\theta(t)-(n-2)\frac{\pi}{2})^{-1}$.
\begin{lemma}\label{keylemma1}
    Let $\underline{u}$ be a subsolution of the critical LYZ equation \eqref{LYZ1} and $u^t$ the solution of the LYZ equation \eqref{LYZt}. 
    Then there exist uniform positive constants $t_0$ and $\delta_0$ such that for any $t \in (0, t_0)$ and any $z \in M$, one of the following holds at $z$:
    \begin{enumerate}[label=(\roman*)]
        \item $\displaystyle\sum_{i,j} F^{i\bar{j}} (\underline{u}_{i\bar{j}} - u^{t}_{i\bar{j}}) \ge \delta_0 \sum_{i,j} F^{i\bar{j}} g_{i\bar{j}}$, or
        \item $\displaystyle\mathcal{F}_{\min} \ge \delta_0 \sum_{i,j} F^{i\bar{j}} g_{i\bar{j}}$,
    where $\mathcal{F}_{\min}$ denotes the smallest eigenvalue of the matrix $(F^{i\bar{k}} g_{j\bar{k}})_{n \times n}$. 
   \end{enumerate}
\end{lemma}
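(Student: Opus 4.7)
The plan is to work pointwise at an arbitrary $z\in M$. Choose holomorphic coordinates near $z$ so that $g_{i\bar j}(z)=\delta_{ij}$ and $w_{i\bar j}(z)=\delta_{ij}\lambda_i$, where $\lambda_i$ are the eigenvalues of $w=\chi_{u^t}+t\omega$ with respect to $\omega$ (hence $\theta(\lambda)=\theta(t)$). In this frame, writing $\alpha_i:=1/(1+\lambda_i^2)$, one has $F^{i\bar j}(z)=\delta_{ij}\alpha_i$, $\sum_{i,j}F^{i\bar j}g_{i\bar j}=\sum_i\alpha_i$, and $\mathcal F_{\min}=\min_i\alpha_i$. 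Let $H_{i\bar j}:=\chi_{\underline u,i\bar j}(z)+t g_{i\bar j}(z)$ be the entries of $\chi_{\underline u}+t\omega$ in this basis and let $\mu=(\mu_1,\dots,\mu_n)$ be its eigenvalues. A direct computation gives $\underline u_{i\bar i}-u^t_{i\bar i}=H_{i\bar i}-\lambda_i$, so the two alternatives of Lemma \ref{keylemma1} reduce to
\[
\sum_i\alpha_i(H_{i\bar i}-\lambda_i)\ \ge\ \delta_0\sum_i\alpha_i\qquad\text{or}\qquad\min_j\alpha_j\ \ge\ \delta_0\sum_i\alpha_i.
\]

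The next step is to invoke Lemma \ref{keylemma} with this $\mu$ and the sorted $\lambda$. The strict subsolution condition \eqref{subsolution} and compactness of the manifold yield a uniform $\varepsilon_0>0$ with $A(\mu(\chi_{\underline u}))>(n-3)\pi/2+\varepsilon_0$; continuity in $t$ then gives $A(\mu)>(n-3)\pi/2+\varepsilon_0/2=:A_0$ for all points and all $t\in(0,t_0)$, after shrinking $t_0$. With \eqref{thetat} controlling $\theta(t)$, the hypotheses of Lemma \ref{keylemma} are met. Its alternative (ii), $\alpha_j\ge\delta_0\sum_i\alpha_i$ for every $j$, is exactly alternative (ii) of the present lemma.

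The remaining case is alternative (i) of Lemma \ref{keylemma}, which is a vector inequality in $\mathbb R^n$, whereas what is needed is a matrix-entry inequality. Because $\chi_{\underline u}$ and $\chi_{u^t}$ do not commute in general, $H_{i\bar i}\neq\mu_i$ and the eigenvalue-level inequality does not transfer directly. The bridge I would use is the Schur--Horn theorem: the real vector $(H_{1\bar 1},\dots,H_{n\bar n})$ is majorised by $\mu$, hence by Rado's theorem it lies in the convex hull of $\{(\mu_{\sigma(1)},\dots,\mu_{\sigma(n)}):\sigma\in S_n\}$, yielding
\[
\sum_i\alpha_i H_{i\bar i}\ \ge\ \min_{\sigma\in S_n}\sum_i\alpha_i\mu_{\sigma(i)}.
\]
Let $\sigma^*$ realise this minimum. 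Since $A$ is a symmetric function, $A(\mu_{\sigma^*})=A(\mu)>A_0$, so Lemma \ref{keylemma} applies to the permuted vector $\mu_{\sigma^*}$ and the same $\lambda$. If its alternative (ii) triggers we are done via (ii) of Lemma \ref{keylemma1}; otherwise (i) gives $\sum_i\alpha_i(\mu_{\sigma^*(i)}-\lambda_i)\ge\delta_0\sum_i\alpha_i$, and combining with the Schur--Horn inequality produces $\sum_i\alpha_i(H_{i\bar i}-\lambda_i)\ge\delta_0\sum_i\alpha_i$, which is (i) of Lemma \ref{keylemma1}.

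The main obstacle is precisely this last transition from an eigenvalue inequality to a matrix-entry inequality. The Schur--Horn/Rado convex-hull description of diagonal entries is what makes the transition lossless and what preserves the constant $\delta_0$, keeping the final estimate independent of $t$ and of $(\theta(t)-(n-2)\pi/2)^{-1}$.
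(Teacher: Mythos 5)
Your argument is correct and follows essentially the same route as the paper: diagonalize $w$ at the point, invoke the vector dichotomy of Lemma \ref{keylemma} (with the subsolution margin made uniform by compactness), and pass from eigenvalues to diagonal entries via Schur--Horn majorization. The only cosmetic differences are that you absorb $t\omega$ into $\mu$ rather than discarding the nonnegative term $t\sum_i F^{i\bar i}$, and you keep the minimizing permutation $\sigma^*$ abstract, whereas the paper sorts $\mu$ and uses that the ordering $F^{1\bar 1}\le\cdots\le F^{n\bar n}$, $\mu_1\ge\cdots\ge\mu_n$ makes the identity the minimizer.
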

 
\begin{proof}
Let $\mu = (\mu_1, \dots, \mu_n)$ be the eigenvalues of $\chi_{\underline{u}}$ with respect to $\omega$, arranged so that $\mu_1 \ge \cdots \ge \mu_n$.

Fix $z \in M$. We choose normal coordinates at $z$ such that
\[
g_{i\bar{j}} = \delta_{ij}, \quad 
\chi_{t, u^{t}} = \sum_{i=1}^n \lambda_i \sqrt{-1} \, dz^i \wedge d\bar{z}^i \quad \text{with} \quad \lambda_1 \ge \cdots \ge \lambda_n,
\]
where $\chi_{t, u^{t}} = \chi + t\omega + \sqrt{-1} \partial \bar{\partial} u^t$.
In these coordinates, at $z$ we have
\begin{align}
F^{i\bar{j}} = \frac{\delta_{ij}}{1+\lambda_i^2}.
\end{align}

Let $\delta_0$ and $t_0$ be the uniform constants given by Lemma \ref{keylemma}. We now prove the lemma by distinguishing two cases.

\textbf{Case 1:} $\displaystyle\sum_{i} F^{i\bar{i}}(\mu_i - \lambda_i) \ge \delta_0 \sum_{i} F^{i\bar{i}}$.

Since $F^{1\bar{1}} \le \cdots \le F^{n\bar{n}}$ and $\mu_1 \ge \cdots \ge \mu_n$, the Schur--Horn theorem \cite{Horn1954AJM} implies
\begin{align*}
\sum_{i,j} F^{i\bar{j}} (\underline{u}_{i\bar{j}} - u^t_{i\bar{j}}) 
&= \sum_{i,j} F^{i\bar{j}} \bigl( (\chi_{\underline{u}})_{i\bar{j}} - (\chi_{t, u^t})_{i\bar{j}} \bigr) + t \sum_{i} F^{i\bar{i}} \\
&\ge \sum_{i,j} F^{i\bar{j}} \bigl( (\chi_{\underline{u}})_{i\bar{j}} - (\chi_{t, u^t})_{i\bar{j}} \bigr) \\
&= \sum_{i} F^{i\bar{i}} (\mu_i - \lambda_i) \\
&\ge \delta_0 \sum_{i} F^{i\bar{i}}.
\end{align*}

\textbf{Case 2:} $\displaystyle\sum_{i} F^{i\bar{i}} (\mu_i - \lambda_i) < \delta_0 \sum_{i} F^{i\bar{i}}$.

By Lemma \ref{keylemma1}, we obtain
\[
\mathcal{F}_{\min} = \frac{1}{1+\lambda_1^2} \ge \delta_0 \sum_{i} \frac{1}{1+\lambda_i^2}
= \delta_0 \sum_{i,j} F^{i\bar{j}} g_{i\bar{j}}.
\]

This completes the proof.
\end{proof}
  	
We use the auxiliary function introduced by Hou-Ma-Wu \cite{hmw2010}, who established complex Hessian estimates for the complex $k$-Hessian equation on a compact K\"ahler manifold, to derive the complex Hessian estimates for families of LYZ equations.
Here, instead of using the largest eigenvalue of $(g^{j\bar k} w_{i\bar k})_{n\times n}$ in \cite{cjy2020}, we employ the term $w_{i\bar j} \xi^{i} \bar{\xi}^{j}$, which simplifies the proof considerably. 

\begin{theorem}\label{thm: Hessian estimate}
    Let $\underline{u}$ be a subsolution of the critical LYZ equation \eqref{LYZ1} and let $u^t$ be the solution of the LYZ equation \eqref{LYZt} with $\sup_M u^t = 0$. 
    Then there exist uniform positive constants $t_0$ and $C$, independent of $\bigl(\theta(t)-(n-2)\frac{\pi}{2}\bigr)^{-1}$, such that for any $t \in (0, t_0)$,
    \begin{align}\label{eq: Hessian estimate}
        \sup_M |\sqrt{-1}\partial\bar{\partial} u^t|_{\omega} \le C \bigl(1 + \sup_M |\nabla u^t|_{\omega}^2 \bigr).
    \end{align}
\end{theorem}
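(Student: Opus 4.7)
The plan is to adapt the Hou-Ma-Wu maximum principle argument to the family of LYZ equations \eqref{LYZt}, using the auxiliary function
$$Q(z,\xi) = \log\bigl(w_{i\bar j}(z)\,\xi^i\bar\xi^j\bigr) + \varphi\bigl(|\na u^t|_{\omega}^2\bigr) + \psi(u^t) + N(\underline u - u^t)$$
defined on the unit sphere bundle $\{(z,\xi):|\xi|_\omega=1\}$, where $w=\chi+t\omega+\sqrt{-1}\p\bar\p u^t$, $\varphi(s)=-\tfrac12\log(1+K-s/(2K))$ with $K=\sup_M|\na u^t|_\omega^2+1$, $\psi(u)=-Au$, and $A,N$ are large constants to be fixed at the end. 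Following the paper's remark, working on the sphere bundle with the Hermitian form $w_{i\bar j}\xi^i\bar\xi^j$ (rather than with the top eigenvalue $\lambda_1$) bypasses the usual non-smoothness of $\lambda_1$ when it has multiplicity, which should noticeably simplify the third-order bookkeeping.

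At a maximum point $(z_0,\xi_0)$ of $Q$, I would pick K\"ahler normal coordinates for $\omega$ at $z_0$, simultaneously diagonalize $w$ with eigenvalues $\lambda_1\ge\cdots\ge\lambda_n$, and use maximality in $\xi$ to rotate $\xi_0=e_1$, so that $w_{i\bar j}\xi_0^i\bar\xi_0^j=\lambda_1=w_{1\bar 1}(z_0)$. The frozen function $\tilde Q(z):=Q(z,e_1)$ then satisfies $\tilde Q_p(z_0)=0$ and $\mathcal L(\tilde Q)(z_0)\le 0$. The core computation expands
$$\mathcal L(\log w_{1\bar 1}) = \frac{F^{i\bar j}w_{1\bar 1,i\bar j}}{\lambda_1} - \frac{F^{i\bar j}w_{1\bar 1,i}w_{1\bar 1,\bar j}}{\lambda_1^2},$$
commutes derivatives $w_{1\bar 1,i\bar j}=w_{i\bar j,1\bar 1}+(\text{curvature of }\omega)\cdot w$, and substitutes the twice-differentiated identity \eqref{differentiate2} for $F^{i\bar j}w_{i\bar j,1\bar 1}$. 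This produces a non-negative third-order quadratic form in $w_{i\bar j,1}$, which, combined with the critical point condition $\tilde Q_1(z_0)=0$ and a Cauchy-Schwarz step, should absorb the negative gradient term $F^{i\bar j}w_{1\bar 1,i}w_{1\bar 1,\bar j}/\lambda_1^2$ once $\varphi'$ is large enough (this is the standard Hou-Ma-Wu cancellation). Simultaneously, $\mathcal L(\varphi(|\na u^t|_\omega^2))$ is handled by using the identity \eqref{differentiate1} to replace $F^{i\bar j}w_{i\bar j,k}$-type terms, and $\mathcal L(\psi(u^t))$ contributes $-A\,F^{i\bar j}(u^t)_{i\bar j} = -A(\lambda_\Sigma-tr) $ which is controlled via Lemma \ref{yuanlemma}.

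After these manipulations, the inequality $\mathcal L(\tilde Q)(z_0)\le 0$ reduces to a schematic form
$$c\,\lambda_1 F^{1\bar 1}\mathcal F \;-\; C\mathcal F \;-\; C(A+N)(1+K) \;+\; N\,F^{i\bar j}(\underline u-u^t)_{i\bar j} \;\le\; 0,$$
where $\mathcal F=\sum_i F^{i\bar i}$. The main obstacle is the loss of uniform ellipticity as $\lambda_1\to\infty$ forces $F^{1\bar 1}\to 0$, and the key device is Lemma \ref{keylemma1}. In case (i), $F^{i\bar j}(\underline u-u^t)_{i\bar j}\ge\delta_0\mathcal F$, so fixing $N$ much larger than the $C$ above produces a positive $(\tfrac12 N\delta_0)\mathcal F$ dominating the error and forcing $\lambda_1\le C(1+K)$ after balancing against the $(A+N)K$ terms. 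In case (ii), $F^{1\bar 1}\ge\delta_0\mathcal F$, so the good term becomes $c\delta_0\lambda_1\mathcal F^2$ and gives the same bound directly. Since the constants $\delta_0$ and $t_0$ of Lemma \ref{keylemma1} are independent of $(\theta(t)-(n-2)\pi/2)^{-1}$, the bound $\lambda_1(z_0)\le C(1+\sup_M|\na u^t|_\omega^2)$ is uniform in $t\in(0,t_0)$. Since $Q(z_0,\xi_0)$ dominates $Q(z,\xi)$ pointwise and $\varphi,\psi,N(\underline u-u^t)$ are uniformly bounded by the $C^0$-estimate of the previous section, this transfers to $\sup_M w_{i\bar j}\xi^i\bar\xi^j\le C(1+\sup_M|\na u^t|_\omega^2)$ for every unit $\xi$, yielding the claimed Hou-Ma-Wu estimate.
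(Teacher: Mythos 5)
Your skeleton is the same as the paper's: the Hou--Ma--Wu auxiliary function built from $\log(w_{i\bar j}\xi^i\bar\xi^j)$, the differentiated identities \eqref{differentiate1}--\eqref{differentiate2}, and the subsolution dichotomy of Lemma \ref{keylemma1} to deal with the loss of ellipticity (your extra term $N(\underline u-u^t)$ versus the paper's normalization $\underline u=0$ absorbed into $\psi$ is an inessential variant). The genuine gap is the sentence asserting that substituting \eqref{differentiate2} ``produces a non-negative third-order quadratic form'' which is then absorbed by ``the standard Hou--Ma--Wu cancellation''. That cancellation relies on concavity of the operator, which is exactly what is \emph{not} available at phase $\theta(t)$ barely above $(n-2)\frac{\pi}{2}$: here $\lambda_n$ may be negative and comparable to $\lambda_1$, and one only knows $\lambda_i+\lambda_j\ge 0$ for $i\neq j$. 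Concretely, $F^{i\bar i}w_{i\bar i,1\bar 1}=\sum_{i,j}\frac{\lambda_i+\lambda_j}{(1+\lambda_i^2)(1+\lambda_j^2)}|w_{i\bar j,1}|^2$ contains the negative diagonal piece $\frac{2\lambda_n}{(1+\lambda_n^2)^2}|w_{n\bar n,1}|^2$; its nonnegativity is \emph{not} automatic and in the paper is recovered only by combining the constraint \eqref{differentiate1} with $1+\lambda_n\sum_{i<n}\lambda_i^{-1}=\lambda_n\sigma_{n-1}/\sigma_n\ge 0$ (i.e.\ $\lambda\in\Gamma_{n-1}$). Even after that, a negative term $\frac{\lambda_n}{\lambda_1(1+\lambda_1^2)(1+\lambda_n^2)}|w_{1\bar 1,n}|^2$ survives (the term $\mathcal{III}$), and controlling it is the heart of the proof: it requires the first-order condition together with a two-case analysis on $|\lambda_n|\le \tau\lambda_1$ versus $|\lambda_n|\ge\tau\lambda_1$ with $\tau=\frac{1}{8A}$, where in the second case the good term $\varphi' F^{n\bar n}u_{n\bar n}^2$ saves the day. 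None of this appears in your sketch, and your schematic ``good term'' $c\,\lambda_1F^{1\bar1}\mathcal F$ is not what the computation yields; the actual good term is $\varphi'F^{i\bar i}u_{i\bar i}^2$, and in case (ii) of Lemma \ref{keylemma1} one first deduces $|\lambda_n|\gtrsim\delta_0\lambda_1$ and then uses $\varphi'F^{n\bar n}u_{n\bar n}^2$.

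Two of your specific choices also undercut the argument. Taking $\psi$ linear discards the $\psi''F^{i\bar i}|u_i|^2$ terms, which the paper uses (via $\psi''=A^{-1}(\psi')^2$) to absorb the $(\psi')^2|u_i|^2$ contributions coming from the critical-point identity in the estimates of $\mathcal I$, $\mathcal{II}$ and $\mathcal{III}$; this can perhaps be repaired by assuming $\lambda_1$ larger than a threshold as in \eqref{lambda1large}, but it must be addressed. More seriously, your $\varphi(s)=-\frac12\log\bigl(1+K-\frac{s}{2K}\bigr)$ has $\varphi'\sim K^{-2}$ on $[0,K-1]$ rather than the Hou--Ma--Wu scale $\varphi'\sim K^{-1}$; even granting all cancellations, the final balance $\varphi'F^{n\bar n}u_{n\bar n}^2\gtrsim K\sum_iF^{i\bar i}$ would then only give $\lambda_1\lesssim K^{3/2}$, not the claimed bound $\lambda_1\le C(1+K)$, which is what the blow-up argument in Section 4 needs. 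So as written the proposal does not close; the missing content is precisely the treatment of the negative $\lambda_n$ contributions and the correct calibration of $\varphi,\psi$.
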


 \begin{proof}
Without loss of generality, we assume $\underline{u} = 0$.
Let $w_{i\bar{j}} = \chi_{i\bar{j}} + t g_{i\bar{j}} + u^{t}_{i\bar{j}}$. For simplicity, we write $u$ instead of $u^t$ in the following proof.  
Denote $S(T^{1,0}M) := \bigcup_{z \in M} \{ \xi \in T^{1,0}_z M \mid |\xi|_{\omega} = 1 \}$. We consider the auxiliary function on $S(T^{1,0}M)$:
\[
H(z,\xi) = \log\bigl(w_{k\bar{l}} \xi^k \bar{\xi}^l\bigr) + \varphi\bigl(|\nabla u|_g^2\bigr) + \psi(u),
\]
where $\varphi$ and $\psi$ are defined by
\[
\begin{aligned}
\varphi(s) &= -\frac{1}{2}\log\!\left(1 - \frac{s}{2K}\right), \quad 0 \le s \le K-1,\\[2mm]
\psi(s) &= -A \log\!\left(1 + \frac{s}{2L}\right), \quad -L+1 \le s \le 0,
\end{aligned}
\]
with
\[
K = \sup_M |\nabla u|_g^2 + 1, \quad 
L = \sup_M |u| + 1, \quad 
A = 2L(C_0 + 1),
\]
and $C_0$ is a uniform positive constant to be determined later. We have the following derivatives:
\begin{align}
(4K)^{-1} &\le \varphi' \le (2K)^{-1}, \quad \varphi'' = 2(\varphi')^2, \label{v1} \\[1mm]
\frac{1}{2} A L^{-1} &\le -\psi' \le A L^{-1}, \quad \psi'' = A^{-1} (\psi')^2.\label{v2}
\end{align}

Suppose $H$ attains its maximum at $(z_0, \xi_0)$. We choose holomorphic coordinates near $z_0$ such that
\[
\begin{aligned}
&g_{i\bar{j}}(z_0) = \delta_{ij}, \quad \partial_k g_{i\bar{j}}(z_0) = 0, \quad \text{and} \\
&w_{i\bar{j}}(z_0) = \lambda_i \delta_{ij} \quad \text{with} \quad \lambda_1 \ge \lambda_2 \ge \dots \ge \lambda_n,
\end{aligned}
\]
which forces $\xi_0 = \frac{\partial}{\partial z_1}$. Extend $\xi_0$ near $z_0$ by $\tilde{\xi}_0(z) = (g_{1\bar{1}})^{-\frac 12} \frac{\partial}{\partial z_1}$. Then the function
\[
Q(z) = H(z, \tilde{\xi}_0(z))
\]
attains a local maximum at $z_0$.

By the maximum principle, at $z_0$ we have
\begin{equation}\label{firstordercondition}
Q_i = \frac{w_{1\bar{1},i}}{w_{1\bar{1}}} + \varphi_i + \psi_i = 0,
\end{equation}
and
\begin{align}\label{LQ}
 		0\ge \mathcal{L}(Q)=\sum_{i}F^{i\bar i}Q_{i\bar i}
 		=&
 	\sum_{i}	\lambda_1^{-1}F^{i\bar i}w_{1\bar 1, i\bar i}-\lambda_1^{-2} \sum_{i}F^{i\bar i}{| w_{1\bar 1, i}|^2}\notag\\
 	&+	\sum_{i}\varphi''F^{i\bar i}\big|( |\nabla u|^2)_i\big|^2+\varphi'\sum_{i}F^{i\bar i} \big(|\nabla u|^2\big)_{i\bar i}\notag\\
 	&+\psi''\sum_{i}F^{i\bar i}|u_{i}|^2+\psi'\sum_{i}F^{i\bar i}u_{i\bar i}.
 	\end{align}

We will estimate each term in \eqref{LQ}. In what follows, we always assume
\begin{equation}\label{lambda1large}
\lambda_1 > 80 A \sqrt{A} K,
\end{equation}
otherwise the desired estimate holds trivially.

Since $d\chi = 0$, the commutation formula for covariant derivatives yields
\begin{align}
w_{1\bar{1}, i\bar{i}} = w_{i\bar{i}, 1\bar{1}} + (\lambda_1 - \lambda_i) R_{1\bar{1}i\bar{i}}.\label{covariantderivativeformulae}
\end{align}
By \eqref{differentiate2} and the fact that $\lambda_i + \lambda_j \ge 0$ for $i \neq j$ (Lemma \ref{yuanlemma}), we obtain
\begin{align}\label{Fiiwii11}
\sum_{i}F^{i\bar{i}} w_{i\bar{i}, 1\bar{1}} 
&= \sum_{i,j} \frac{\lambda_i + \lambda_j}{(1+\lambda_i^2)(1+\lambda_j^2)} |w_{i\bar{j},1}|^2 \notag \\
&\ge \sum_{i=2}^n \frac{\lambda_1 + \lambda_i}{(1+\lambda_1^2)(1+\lambda_i^2)} |w_{1\bar{1},i}|^2 
   + \sum_{i=1}^n \frac{2\lambda_i}{(1+\lambda_i^2)^2} |w_{i\bar{i},1}|^2,
\end{align}
where we have used $w_{i\bar{1},1} = w_{1\bar{1},i}$, which follows from $d\chi = 0$.

 	We now prove the following inequality
\begin{align}\label{3Fiiwii11}
    \sum_{i} \frac{\lambda_i}{(1+\lambda_i^2)^2} | w_{i\bar i, 1} |^2 \ge 0.
\end{align}
This is clearly true if $\lambda_n \ge 0$.  
Assume now that $\lambda_n < 0$. By \eqref{differentiate1}, we have
\[
\Bigl| \frac{w_{n\bar n, 1}}{1+\lambda_n^2} \Bigr| = \Bigl| \sum_{i=1}^{n-1} \frac{w_{i\bar i, 1}}{1+\lambda_i^2} \Bigr|.
\]
Using this relation and the fact that $\lambda_i > 0$ for $1 \le i \le n-1$, we obtain
\begin{align*}
\sum_{i=1}^n \frac{\lambda_i}{(1+\lambda_i^2)^2} | w_{i\bar i, 1} |^2
&= \sum_{i=1}^{n-1} \frac{\lambda_i |w_{i\bar i, 1}|^2}{(1+\lambda_i^2)^2}
   + \lambda_n \Bigl| \sum_{i=1}^{n-1} \frac{w_{i\bar i, 1}}{1+\lambda_i^2} \Bigr|^2 \\
&\ge \Bigl( \sum_{i=1}^{n-1} \lambda_i^{-1} \Bigr)^{-1} 
      \Bigl| \sum_{i=1}^{n-1} \frac{w_{i\bar i, 1}}{1+\lambda_i^2} \Bigr|^2
   + \lambda_n \Bigl| \sum_{i=1}^{n-1} \frac{w_{i\bar i, 1}}{1+\lambda_i^2} \Bigr|^2 \\
&= \frac{1 + \lambda_n \sum\limits_{i=1}^{n-1} \lambda_i^{-1}}
          {\sum\limits_{i=1}^{n-1} \lambda_i^{-1}}
      \Bigl| \sum_{i=1}^{n-1} \frac{w_{i\bar i, 1}}{1+\lambda_i^2} \Bigr|^2 \\
&\ge 0,
\end{align*}
where the last inequality follows from
\[
1 + \lambda_n \sum_{i=1}^{n-1} \lambda_i^{-1} 
= \lambda_n \, \frac{\sigma_{n-1}(\lambda)}{\sigma_n(\lambda)} \ge 0.
\]
 		  
 Combining \eqref{Fiiwii11} with \eqref{3Fiiwii11}, we obtain
\begin{align}
\sum_{i} F^{i\bar{i}} w_{i\bar{i}, 1\bar{1}} \ge 
\sum_{i=2}^n \frac{\lambda_1 + \lambda_i}{(1+\lambda_1^2)(1+\lambda_i^2)} | w_{1\bar{1}, i} |^2.
\end{align}
Consequently,
\begin{align}
&\lambda_1^{-1} \sum_{i}F^{i\bar{i}} w_{i\bar{i}, 1\bar{1}} 
- \lambda_1^{-2}\sum_{i} F^{i\bar{i}} | w_{1\bar{1},i} |^2 \notag \\
\ge& -\lambda_1^{-2} (1+\lambda_1^2)^{-1} | w_{1\bar{1}, 1} |^2  - \sum_{i=2}^n \frac{| w_{1\bar{1}, i} |^2}{\lambda_1^{2}(1+\lambda_1^2)(1+\lambda_i^2)} \notag \\
& + \sum_{i=2}^n \frac{\lambda_i}{\lambda_1(1+\lambda_1^2)(1+\lambda_i^2)} | w_{1\bar{1}, i} |^2 \notag \\
=& \mathcal{I} + \mathcal{II} + \mathcal{III}. \label{Fiiwii11w11i}
\end{align}

We first estimate $\mathcal{I}$ and $\mathcal{II}$ using \eqref{firstordercondition}:
\begin{align} \label{termI}
\mathcal{I} &\ge -2(\varphi')^2 F^{1\bar{1}} |(|\nabla u|^2)_1|^2 
               - 2(\psi')^2 F^{1\bar{1}} |u_{1}|^2 \notag \\
            &\ge -\varphi'' F^{1\bar{1}} |(|\nabla u|^2)_1|^2 
               - 2(\psi')^2 K F^{1\bar{1}},
\end{align}
and
\begin{align} \label{termII}
\mathcal{II} &= -(1+\lambda_1^2)^{-1} \sum_{i=2}^n \frac{F^{i\bar{i}} | w_{1\bar{1}, i} |^2}{\lambda_1^{2}} \notag \\
             &\ge -2(1+\lambda_1^2)^{-1} (\varphi')^2 
                    \sum_{i=2}^n F^{i\bar{i}} |(|\nabla u|^2)_i|^2 \notag \\
             &\quad - 2(1+\lambda_1^2)^{-1} (\psi')^2 
                    \sum_{i=2}^n F^{i\bar{i}} |u_{i}|^2 \notag \\
             &\ge -\frac{1}{12} \varphi'' \sum_{i=2}^n F^{i\bar{i}} |(|\nabla u|^2)_i|^2 
                - \frac{1}{2} \psi'' \sum_{i=2}^n F^{i\bar{i}} |u_{i}|^2,
\end{align}
where the last inequality follows from \eqref{lambda1large}.

Next we estimate the term $\mathcal{III}$. 
Since $\lambda_i>0$ for $1\le i\le n-1$ by Lemma \ref{yuanlemma}, we obtain
\[
\mathcal{III} \ge \frac{\lambda_n|w_{1\bar{1}, n}|^2}{\lambda_1(1+\lambda_1^2)(1+\lambda_n^2)}.
\]
Then, using \eqref{firstordercondition} and the Cauchy-Schwarz inequality, we have
\begin{align}\label{estimateIII1}
\mathcal{III} &\ge -\frac{4}{3}\frac{\lambda_1|\lambda_n|}{1+\lambda_1^2}(\varphi')^2F^{n\bar{n}}|(|\nabla u|^2)_n|^2 
                -\frac{4\lambda_1|\lambda_n|}{1+\lambda_1^2}(\psi')^2F^{n\bar{n}}|u_n|^2 \notag \\
              &\ge -\frac{2}{3}\varphi''F^{n\bar{n}}|(|\nabla u|^2)_n|^2 
                -\frac{4\lambda_1|\lambda_n|}{1+\lambda_1^2}(\psi')^2F^{n\bar{n}}|u_n|^2,
\end{align}
where we used \eqref{v1}.

Let $\tau = \frac{1}{8A}$. We proceed to estimate $\mathcal{III}$ in two cases.

\textbf{Case 1: $|\lambda_n| \le \tau \lambda_1$.} 
In this case, we obtain
\begin{align}\label{IIIcase1}
\mathcal{III} &\ge -\frac{2}{3}\varphi''F^{n\bar{n}}|(|\nabla u|^2)_n|^2 - 4\tau(\psi')^2F^{n\bar{n}}|u_n|^2 \notag \\
              &\ge -\frac{2}{3}\varphi''F^{n\bar{n}}|(|\nabla u|^2)_n|^2 - \frac{1}{2}\psi''F^{n\bar{n}}|u_n|^2,
\end{align}
where we used \eqref{v2}.

\textbf{Case 2: $|\lambda_n| \ge \tau \lambda_1$.} 
By \eqref{lambda1large}, we have 
$|u_{n\bar{n}}|^2 = (\lambda_n - \chi_{n\bar{n}} - t)^2 \ge \frac{\tau^2}{2}\lambda_1^2$,
and consequently
\[
4(\psi')^2\frac{\lambda_1|\lambda_n|}{1+\lambda_1^2}F^{n\bar{n}}|u_n|^2 
\le 4A F^{n\bar{n}} K 
\le \frac{1}{3} \varphi'F^{n\bar{n}} u_{n\bar{n}}^2.
\]
Inserting this inequality into \eqref{estimateIII1} yields
\begin{align}\label{IIIcase2}
\mathcal{III} \ge -\frac{2}{3}\varphi''F^{n\bar{n}}|(|\nabla u|^2)_n|^2 - \frac{1}{3}\varphi'F^{n\bar{n}} u_{n\bar{n}}^2.
\end{align}
	
 	Combining \eqref{IIIcase1} and \eqref{IIIcase2}, we obtain
\begin{align}\label{termIII}
    \mathcal{III} \ge 
    -\frac{2}{3}\varphi''F^{n\bar{n}}|(|\nabla u|^2)_n|^2 
    - \frac{1}{2}\psi''F^{n\bar{n}}|u_n|^2 
    - \frac{1}{3}\varphi'F^{n\bar{n}}u_{n\bar{n}}^2.
\end{align}

Inserting \eqref{termI}, \eqref{termII} and \eqref{termIII} into \eqref{Fiiwii11w11i}, we obtain
\begin{align}\label{Fiiwii11w11ifinal}
    &\lambda_1^{-1}\sum_{i} F^{i\bar{i}} w_{i\bar{i}, 1\bar{1}} 
      - \lambda_1^{-2} \sum_{i}F^{i\bar{i}} | w_{1\bar{1},i} |^2\notag \\
      &+ \frac{3}{4}\varphi'' \sum_{i}F^{i\bar{i}} |(|\nabla u|^2)_i|^2 
      + \psi'' \sum_{i}F^{i\bar{i}} |u_i|^2 
      + \frac{1}{3}\varphi' \sum_{i}F^{i\bar{i}} u_{i\bar{i}}^2 \notag \\
    \ge& \frac{1}{3}\varphi' F^{1\bar{1}} u_{1\bar{1}}^2 
      - 2K(\psi')^2 F^{1\bar{1}} > 0,
\end{align}
where we used $u_{1\bar{1}} \ge \frac{\lambda_1}{2}$, which follows from \eqref{lambda1large}.

Next we estimate $\sum\limits_{i}F^{i\bar{i}} (|\nabla u|^2)_{i\bar{i}}$. By direct computation, we have
\begin{align}\label{Fiinablausquareii}
    \sum_{i}F^{i\bar{i}}(|\nabla u|^2)_{i\bar{i}} = \sum_{i}F^{i\bar{i}}(u_{k\bar{i}}u_{\bar{k} i} + u_{ki}u_{\bar{k}\bar{i}}) 
    + \sum_{i,k}F^{i\bar{i}}(u_{k i\bar{i}}u_{\bar{k}} + u_{k}u_{\bar{k} i\bar{i}}).
\end{align}
Using the covariant derivative formulae and \eqref{differentiate1}, we obtain
\begin{align}
    &\sum_{i,k}F^{i\bar{i}}(u_{k i\bar{i}}u_{\bar{k}} + u_{k}u_{\bar{k} i\bar{i}}) \notag \\
    \ge &-\sum_{i,k}F^{i\bar{i}}(u_{i\bar{i} k}u_{\bar{k}} + u_{k}u_{i\bar{i}\bar{k}}) - |\text{Rm}||\nabla u|^2 \sum_{i} F^{i\bar{i}} \notag \\
    \ge &-2K(|\nabla \chi| + |\text{Rm}|) \sum_{i} F^{i\bar{i}}.
\end{align}
Inserting the above estimate into \eqref{Fiinablausquareii}, we get
\begin{align}\label{Fiinablausquareii2}
    \sum_{i}F^{i\bar{i}}(|\nabla u|^2)_{i\bar{i}} \ge \sum_{i}F^{i\bar{i}} u_{i\bar{i}}^2 
    - 2K(|\nabla \chi| + |\text{Rm}|) \sum_{i} F^{i\bar{i}}.
\end{align}

Inserting \eqref{covariantderivativeformulae}, \eqref{Fiiwii11w11ifinal} and \eqref{Fiinablausquareii2} into \eqref{LQ}, we obtain
\begin{align}\label{FQfinal}
    0 \ge \sum_{i} F^{i\bar{i}} Q_{i\bar{i}} 
    &\ge \psi' \sum_{i} F^{i\bar{i}} u_{i\bar{i}} 
      + \frac{1}{3}\varphi' \sum_{i}F^{i\bar{i}} u_{i\bar{i}}^2 
      - 3K \varphi'(|\nabla \chi| + |\text{Rm}|) \sum_{i} F^{i\bar{i}} \notag \\
    &\ge \psi' \sum_{i=1}^n F^{i\bar{i}} u_{i\bar{i}} 
      + \frac{1}{3}\varphi' \sum_{i}F^{i\bar{i}} u_{i\bar{i}}^2 
      - 2(|\nabla \chi| + |\text{Rm}|) \sum_{i} F^{i\bar{i}}.
\end{align}

By Lemma \ref{keylemma1} and noting that we have assumed $\underline{u}=0$, one of the following holds:
\begin{align*}
    &(i) \ -\sum_{i} F^{i\bar{i}} u_{i\bar{i}} \ge \delta_0 \sum_{i} F^{i\bar{i}}, \quad \text{or} \\
    &(ii) \ F^{1\bar{1}} \ge \delta_0 \sum_{i} F^{i\bar{i}}.
\end{align*}
We choose $C_0 = 6\delta_0^{-1} (\max |\nabla \chi| + |\text{Rm}|)$.

If ($i$) holds, then by \eqref{FQfinal} and $-\psi' \ge C_0 + 1$, we obtain a contradiction.
Thus ($ii$) holds, and we get $|\lambda_n| \ge \frac{\delta_0}{2} \lambda_1$. Consequently, we have
\begin{align*}
    0 &\ge \frac{\lambda_1^2}{4} F^{n\bar{n}} - 3K (|\nabla \chi| + |\text{Rm}|) \sum_{i} F^{i\bar{i}} \\
      &\ge \Bigl( \frac{\lambda_1^2}{4} - 3n K (|\nabla \chi| + |\text{Rm}|) \Bigr) F^{n\bar{n}}.
\end{align*}
Therefore, $\lambda_1 \le C K$, where $C$ is a uniform positive constant independent of $t$.
 \end{proof}

\section{Gradient estimates and the proof of the main theorems}
	
In this section, following the argument by Dinew and Ko{\l}odziej \cite{dk2017ajm} and Sz\'ekelyhidi \cite{szek2018},
we prove a Liouville theorem for the equation $\sigma_{n}+\sigma_{n-1}=0$. Then, based on the uniform complex Hessian estimates of the Hou-Ma-Wu type from Section 3, we get the gradient estimate  via a blow-up argument. Finally, we prove the main theorems.

We begin by defining viscosity solutions for the equation
\begin{align}
    \sigma_{n-1}(\sqrt{-1}\partial\bar\partial v) + \sigma_{n}(\sqrt{-1}\partial\bar\partial v) = 0. \label{vhq}
\end{align}
A $C^2$ function $v: \Omega \rightarrow \mathbb{R}$ is said to be \emph{$k$-admissible} in a domain $\Omega \subset \mathbb{C}^n$ if, for all $z \in \Omega$, the eigenvalue vector $\lambda(\sqrt{-1}\partial\bar\partial v(z))$ belongs to $\overline\Gamma_k$.

\begin{definition}
    $(i)$ A continuous function $v : \Omega \rightarrow \mathbb{R}$ is a viscosity subsolution of \eqref{vhq} if for every $C^2$ function $\varphi$ and every local minimum point $z_0 \in \Omega$ of $\varphi - v$, the following inequality holds:
    \begin{align*}
        \sigma_{n-1}(\sqrt{-1}\partial\bar{\partial} \varphi(z_0)) + \sigma_{n}(\sqrt{-1}\partial\bar{\partial} \varphi(z_0)) \ge 0.
    \end{align*}

    $(ii)$ A continuous function $v : \Omega \rightarrow \mathbb{R}$ is a viscosity supersolution of \eqref{vhq} if for every $(n-1)$-admissible function $\varphi$ and every local maximum point $z_0 \in \Omega$ of $\varphi - v$, the following inequality holds:
    \begin{align*}
        \sigma_{n-1}(\sqrt{-1}\partial\bar{\partial} \varphi(z_0)) + \sigma_{n}(\sqrt{-1}\partial\bar{\partial} \varphi(z_0)) \le 0.
    \end{align*}

    $(iii)$ A continuous function $v : \Omega \rightarrow \mathbb{R}$ is a viscosity solution of \eqref{vhq} if it is both a viscosity subsolution and a viscosity supersolution.
\end{definition}

If $v$ is a $C^2$ viscosity subsolution of \eqref{vhq}, then it satisfies $$\sigma_{n-1}(\sqrt{-1}\partial \bar \partial v) + \sigma_{n}(\sqrt{-1}\partial \bar \partial v) \ge 0$$ in the classical sense.

We now establish some basic properties.

\begin{lemma}\label{Lemma4.1}
    $(i)$ Let $\{v^i\}$ be a sequence of smooth viscosity subsolutions of \eqref{vhq} in $\Omega$ that converges locally uniformly to $v$. Then $v$ is also a viscosity subsolution of \eqref{vhq} in $\Omega$.

    $(ii)$ Let $v$ be a viscosity subsolution of \eqref{vhq} in $\mathbb{C}^n$. If $v$ is independent of $z_n$, then its restriction to $\mathbb{C}^{n-1}$ is a viscosity subsolution of $\sigma_{n-1}=0$.
\end{lemma}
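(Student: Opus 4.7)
The plan is the standard viscosity stability argument. Suppose $\varphi$ is $C^2$ and $\varphi-v$ has a local minimum at $z_0$. First I would replace $\varphi$ by $\varphi_\epsilon(z):=\varphi(z)+\epsilon|z-z_0|^2$; since $\varphi_\epsilon\ge\varphi$ with equality at $z_0$, the point $z_0$ remains a local minimum of $\varphi_\epsilon-v$, and is now a strict one in a fixed small ball $B_r(z_0)$. The locally uniform convergence $v^i\to v$ then forces $\varphi_\epsilon-v^i$ to attain a local minimum at some $z_i\in B_r(z_0)$ with $z_i\to z_0$. Because $v^i$ is itself a viscosity subsolution, applying the definition directly to $v^i$ with the smooth test $\varphi_\epsilon$ at $z_i$ gives
$$
\sigma_{n-1}(\sqrt{-1}\partial\bar\partial\varphi_\epsilon(z_i))+\sigma_n(\sqrt{-1}\partial\bar\partial\varphi_\epsilon(z_i))\ge 0.
$$
I would then pass $i\to\infty$, using continuity of $\partial\bar\partial\varphi_\epsilon$ and of the polynomials $\sigma_{n-1},\sigma_n$, followed by $\epsilon\to 0$, to obtain the required inequality at $z_0$.

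\textbf{Part (ii): restriction.} The plan is to lift test functions from $\mathbb{C}^{n-1}$ to $\mathbb{C}^n$ by trivial extension. Let $\tilde v$ denote the restriction of $v$ to $\mathbb{C}^{n-1}$, and let $\psi$ be $C^2$ on $\mathbb{C}^{n-1}$ with $\psi-\tilde v$ attaining a local minimum at $\tilde z_0=(z^0_1,\ldots,z^0_{n-1})$. Define $\varphi(z_1,\ldots,z_n):=\psi(z_1,\ldots,z_{n-1})$. Since both $\varphi$ and $v$ are independent of $z_n$, the difference $\varphi-v$ attains a local minimum at the point $(\tilde z_0,0)\in\mathbb{C}^n$. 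The viscosity subsolution property of $v$ then yields
$$
\sigma_{n-1}(\sqrt{-1}\partial\bar\partial\varphi(\tilde z_0,0))+\sigma_n(\sqrt{-1}\partial\bar\partial\varphi(\tilde z_0,0))\ge 0.
$$
The Hessian $\sqrt{-1}\partial\bar\partial\varphi(\tilde z_0,0)$ is block-diagonal with blocks $\sqrt{-1}\partial\bar\partial\psi(\tilde z_0)$ and $0$, so its eigenvalue vector is $(\lambda_1,\ldots,\lambda_{n-1},0)$ where $\lambda_j$ are the eigenvalues of $\sqrt{-1}\partial\bar\partial\psi(\tilde z_0)$. A direct computation gives $\sigma_n(\lambda_1,\ldots,\lambda_{n-1},0)=0$ and $\sigma_{n-1}(\lambda_1,\ldots,\lambda_{n-1},0)=\lambda_1\cdots\lambda_{n-1}$, which is precisely $\sigma_{n-1}$ of $\sqrt{-1}\partial\bar\partial\psi(\tilde z_0)$ viewed in $\mathbb{C}^{n-1}$. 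This is the desired subsolution inequality for $\tilde v$.

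\textbf{Main obstacle.} There is no serious obstacle here; both parts are routine viscosity-theoretic statements. The one conceptual point worth stressing in part (i) is that no monotonicity of $\sigma_{n-1}+\sigma_n$ is needed: one might be tempted to compare $\partial\bar\partial\varphi_\epsilon(z_i)$ with $\partial\bar\partial v^i(z_i)$ at the minimum and invoke monotonicity under the Hessian ordering, which would be delicate because $\sigma_{n-1}+\sigma_n$ is not monotone on the full space of Hermitian matrices. This step is bypassed by applying the viscosity definition for $v^i$ directly to the smooth test $\varphi_\epsilon$. In part (ii) the only nontrivial identity is $\sigma_{n-1}(\lambda_1,\ldots,\lambda_{n-1},0)=\lambda_1\cdots\lambda_{n-1}$, which is immediate since every $(n-1)$-fold product of $(\lambda_1,\ldots,\lambda_{n-1},0)$ that omits the final slot contains all $\lambda_j$'s, while all other products vanish.
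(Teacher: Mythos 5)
Your proposal is correct and follows essentially the same route as the paper: part (i) is the standard stability argument with the perturbed test function $\varphi+\epsilon|z-z_0|^2$ and nearby interior minimum points (the paper localizes to $B_\epsilon(z_0)$ and sends $\epsilon\to 0$ directly, while you send $i\to\infty$ first, a cosmetic difference), and part (ii) is exactly the trivial-extension/block-diagonal Hessian computation the paper leaves to the reader with the remark that it ``follows from the definition.'' No gaps.
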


 	\begin{proof}
 		For any $C^2$ function $\varphi$ and any local minimum point $z_0\in\Omega$ of $\varphi-v$,
 	we  need to show 
 	\begin{align}
 		\sigma_{n-1}(\varphi_{\alpha\bar \beta}(z_0))+\sigma_n(\varphi_{\alpha\bar \beta}(z_0))\ge 0.\label{vsubsolution}
 	\end{align}
By local uniform convergence, we can fix $r>0$ such that $v^i\to v$ uniformly on $\overline{B_r(z_0)}$. 
 Then for any $\epsilon\in (0,r)$, for all sufficiently large $i$, we have  $\sup_{B_{r}(z_0)}|v^i-v|<\frac{\epsilon^3}{3}$.
Then the function $\Phi = \varphi + \epsilon|z - z_0|^2 - v^i$ satisfies $\Phi|_{\partial B_{\epsilon}(z_0)} > \Phi(z_0)$, and thus $\Phi$ attains its minimum on $\overline{B_{\epsilon}(z_0)}$ at some point $z_{i,\epsilon} \in B_{\epsilon}(z_0)$.
 Since $v^i$ is a viscosity subsolution,
 	we have 
 	\begin{align*}
 		\sigma_{n-1}(\varphi_{\alpha\bar \beta}(z_{i,\epsilon})+\epsilon\delta_{\alpha\beta})+\sigma_n(\varphi_{\alpha\bar \beta}(z_{i,\epsilon})+\epsilon\delta_{\alpha\beta})\ge 0.
 	\end{align*}
Letting $\epsilon \to 0$ then implies \eqref{vsubsolution}. This proves $(i)$. 
 	
 	$(ii)$ follows directly from the definition of viscosity subsolution.
 	\end{proof}
 		
We will need the following comparison principle in the proof of the Liouville Theorem \ref{LiouTh}.
\begin{lemma}\label{Lemma4.2}
    Let $\Omega \subset \mathbb{C}^n$ be a bounded domain. Suppose $w$ is an $(n-1)$-admissible function and a viscosity subsolution of \eqref{vhq} in $\Omega$, and $v$ is a viscosity supersolution of \eqref{vhq} in $\Omega$. If $w \le v$ on $\partial\Omega$, then $w \le v$ in $\Omega$.
\end{lemma}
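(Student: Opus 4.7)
The plan is to argue by contradiction using the Crandall-Ishii doubling-of-variables technique, after first perturbing $w$ into a strict subsolution. Assume for contradiction that $\max_{\overline\Omega}(w-v)>0$. Fix $R>0$ with $\overline\Omega\subset B_R(0)$ and, for small $\epsilon>0$, put
\[
 w_\epsilon(z)=w(z)+\epsilon(|z|^2-R^2).
\]
Then $w_\epsilon\le w$ on $\overline\Omega$, so $w_\epsilon\le v$ on $\partial\Omega$, while $\max_{\overline\Omega}(w_\epsilon-v)>0$ for $\epsilon$ small enough. If $\varphi$ is a $C^2$ test function with $\varphi-w_\epsilon$ having a local minimum at $z_0\in\Omega$, then $\varphi-\epsilon(|z|^2-R^2)$ is a test function for $w$ at $z_0$, so $\mu:=\lambda(\sqrt{-1}\partial\bar\partial\varphi(z_0))-\epsilon\mathbf{1}\in\overline\Gamma_{n-1}$ and $\sigma_{n-1}(\mu)+\sigma_n(\mu)\ge 0$. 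Expanding $\sigma_{n-1}+\sigma_n$ evaluated at $\mu+\epsilon\mathbf{1}$ in powers of $\epsilon$ produces non-negative coefficients against each $\sigma_k(\mu)$ for $k\le n-1$, with the pure $\epsilon^{n-1}$ term (coming from $\sigma_0=1$) equal to $n\epsilon^{n-1}$; combined with $\mu\in\overline\Gamma_{n-1}$ and the inequality above, this shows $w_\epsilon$ is a strict viscosity subsolution,
\[
 \sigma_{n-1}(\sqrt{-1}\partial\bar\partial w_\epsilon)+\sigma_n(\sqrt{-1}\partial\bar\partial w_\epsilon)\ge n\epsilon^{n-1}.
\]

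Next, I apply the Crandall-Ishii maximum principle for semicontinuous functions to
\[
 \Phi_\alpha(z,z')=w_\epsilon(z)-v(z')-\tfrac{1}{2\alpha}|z-z'|^2
\]
on $\overline\Omega\times\overline\Omega$. For small $\alpha>0$ the maximum is attained at some $(z_\alpha,z_\alpha')$; a subsequence converges to a common interior maximizer of $w_\epsilon-v$ as $\alpha\to 0$, and the lemma furnishes Hermitian matrices $X_\alpha\in\overline{J^{2,+}}w_\epsilon(z_\alpha)$ and $Y_\alpha\in\overline{J^{2,-}}v(z_\alpha')$ with $X_\alpha\le Y_\alpha$. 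The strict subsolution property gives $X_\alpha\in\overline\Gamma_{n-1}$ and $\sigma_{n-1}(X_\alpha)+\sigma_n(X_\alpha)\ge n\epsilon^{n-1}$. Since $\overline\Gamma_{n-1}$ is closed under addition of positive semidefinite matrices (a consequence of $\sigma_k'\ge 0$ on $\overline\Gamma_k$), writing $Y_\alpha=X_\alpha+(Y_\alpha-X_\alpha)$ shows $Y_\alpha\in\overline\Gamma_{n-1}$, so $Y_\alpha$ is a legitimate test jet for the supersolution side and therefore $\sigma_{n-1}(Y_\alpha)+\sigma_n(Y_\alpha)\le 0$.

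The contradiction is extracted using the lifting identity highlighted in the introduction: for any Hermitian $A$, setting $\tilde A=\operatorname{diag}(A,1)\in\operatorname{Herm}(n+1)$ gives
\[
 \sigma_{n-1}(A)+\sigma_n(A)=\sigma_n^{(n+1)}(\tilde A),
\]
and the conditions $\sigma_k(A)\ge 0$ for $0\le k\le n-1$ together with $\sigma_{n-1}(A)+\sigma_n(A)\ge 0$ force $\tilde A\in\overline{\Gamma_n^{(n+1)}}$. In particular $\tilde X_\alpha\in\overline{\Gamma_n^{(n+1)}}$; and since $\tilde Y_\alpha-\tilde X_\alpha=\operatorname{diag}(Y_\alpha-X_\alpha,0)\ge 0$, the segment from $\tilde X_\alpha$ to $\tilde Y_\alpha$ stays in $\overline{\Gamma_n^{(n+1)}}$ and $\sigma_n^{(n+1)}$ is non-decreasing along it by Garding monotonicity in dimension $n+1$. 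Therefore
\[
 \sigma_{n-1}(Y_\alpha)+\sigma_n(Y_\alpha)=\sigma_n^{(n+1)}(\tilde Y_\alpha)\ge\sigma_n^{(n+1)}(\tilde X_\alpha)\ge n\epsilon^{n-1}>0,
\]
contradicting the supersolution inequality.

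The main obstacle I anticipate is that $\sigma_{n-1}+\sigma_n$ is \emph{not} monotone in positive semidefinite directions on $\overline\Gamma_{n-1}$, so a direct Garding argument within $\mathbb{C}^n$ cannot compare $\sigma_{n-1}+\sigma_n$ at $X_\alpha$ and $Y_\alpha$. The lifting to $\mathbb{C}^{n+1}$ is essential because $\sigma_n^{(n+1)}$ \emph{is} monotone on its natural Garding cone, and the delicate structural step is verifying that $\tilde X_\alpha\in\overline{\Gamma_n^{(n+1)}}$, which genuinely uses both the $(n-1)$-admissibility of $w$ and the strict subsolution inequality.
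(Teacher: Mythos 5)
Your argument reaches the right contradiction, but by a much heavier route than the paper's, and one step is asserted rather than justified. The paper's proof exploits that, by its definition, an $(n-1)$-admissible function is $C^2$: at an interior maximum point $z_0$ of $w+\epsilon|z|^2-v$ (which exists for small $\epsilon>0$, exactly as in your perturbation step), the function $\varphi=w+\epsilon|z|^2$ is itself an admissible test function, so the supersolution definition gives $\sigma_{n-1}(W+\epsilon I)+\sigma_n(W+\epsilon I)\le 0$ with $W=(w_{\alpha\bar\beta}(z_0))$; expanding in $\epsilon$, using $\sigma_k(W)\ge 0$ for $k\le n-1$ and the classical subsolution inequality $\sigma_{n-1}(W)+\sigma_n(W)\ge 0$, immediately yields a contradiction. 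No doubling of variables, jets, or monotonicity considerations are needed, because both inequalities are evaluated at the same matrix $W+\epsilon I$. Your Crandall--Ishii detour is what creates the non-monotonicity problem that you then solve with the $\mathbb{C}^{n+1}$ lifting; the identity $\sigma_{n-1}(A)+\sigma_n(A)=\sigma_n^{(n+1)}(\mathrm{diag}(A,1))$, the verification that $\tilde X_\alpha$ lies in the closed G\r{a}rding cone of $\sigma_n^{(n+1)}$, and the monotonicity along positive semidefinite directions are all correct and in the same spirit as Lemma \ref{keylemmavr}, so this portion is a legitimate, if roundabout, alternative.

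The step you should not take for granted is that ``$Y_\alpha$ is a legitimate test jet for the supersolution side.'' The paper's supersolution definition quantifies only over $(n-1)$-admissible $C^2$ test functions, and for a subjet matrix whose complex part lies on $\partial\overline{\Gamma}_{n-1}$ the usual jet-to-test-function conversion breaks down: a quadratic touching $v$ from below must have Hessian strictly smaller than $Y_\alpha$ (to absorb the $o(|z-z'|^2)$ error in the jet inequality), and strictly decreasing a matrix on the boundary of $\overline{\Gamma}_{n-1}$ pushes it out of the cone, so no admissible test function is produced and $\sigma_{n-1}(Y_\alpha)+\sigma_n(Y_\alpha)\le 0$ does not follow from the definition alone. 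In your setting this is repairable: since $Y_\alpha\ge X_\alpha$ and $X_\alpha$ dominates the complex Hessian of $w_\epsilon$ at the touching point, which lies in $\overline{\Gamma}_{n-1}+\epsilon I$, the matrices $Y_\alpha-\delta I$ remain in $\overline{\Gamma}_{n-1}$ for $0<\delta\le\epsilon$ and furnish admissible quadratics touching from below. But note that this repair uses exactly the $C^2$ admissibility of $w$ that renders the doubling machinery unnecessary; likewise your assertion that $X_\alpha\in\overline{\Gamma}_{n-1}$ comes from $X_\alpha\ge D^2w_\epsilon(z_\alpha)$, i.e.\ from admissibility of $w$, not from the ``strict subsolution property.'' Either supply these jet lemmas explicitly or switch to the paper's direct test-function argument.
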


\begin{proof}
    Assume that there exists a point $x_0 \in \Omega$ such that $w(x_0) > v(x_0)$. Choose $\epsilon > 0$ sufficiently small so that
    \[
    \epsilon < \frac{w(x_0) - v(x_0)}{\max_{z \in \overline{\Omega}} |z|^2}.
    \]
    Since $w \le v$ on $\partial\Omega$, we deduce the following chain of inequalities:
    \begin{align*}
        \max_{z \in \overline{\Omega}} \,(w + \epsilon|z|^2 - v)
        &\ge w(x_0) - v(x_0) \\
        &> \epsilon \cdot \max_{z \in \partial\Omega} |z|^2 \\
        &\ge \max_{z \in \partial\Omega} \,(w + \epsilon|z|^2 - v).
    \end{align*}
    This implies that the function $w + \epsilon|z|^2 - v$ attains its maximum at an interior point $z_0 \in \Omega$.

    Now, since $w$ is $(n-1)$-admissible, the function $w + \epsilon|z|^2$ is also $(n-1)$-admissible. As $v$ is a viscosity  supersolution,  
   by the definition of  the supersolution, we have
    \begin{align}
        \sigma_{n-1}((w_{\alpha\bar{\beta}}(z_0)) + \epsilon I_n) +  \sigma_n((w_{\alpha\bar{\beta}}(z_0)) + \epsilon I_n) \le 0. \label{eq:contradiction-start}
    \end{align}
    However, since $\sigma_k(w_{\alpha\bar{\beta}}(z_0)) \ge 0$ for $1 \le k \le n-1$, expanding the left-hand side of \eqref{eq:contradiction-start} gives:
    \begin{align*}
        \sigma_{n-1}(w_{\alpha\bar{\beta}}(z_0)) +  \sigma_n(w_{\alpha\bar{\beta}}(z_0)) \le -\epsilon^{n} < 0. 
    \end{align*}
    This contradicts the assumption that $w$ is a viscosity subsolution of \eqref{vhq}, thereby completing the proof.
\end{proof}

We recall the definition of $k$-subharmonic functions following Blocki \cite{blocki05}. Let $\mathcal{E} = \sum_{\alpha} \sqrt{-1} dz_{\alpha} \wedge d\bar{z}_{\alpha}$ denote the standard K\"ahler metric on $\mathbb{C}^n$. 
\begin{definition}
For $1 \le k \le n$, a continuous function $v: \Omega \rightarrow \mathbb{R}$ is called \emph{$k$-subharmonic} if $v$ is subharmonic and
\[
\sqrt{-1}\partial \bar{\partial} v \wedge \gamma_1 \wedge \cdots \wedge \gamma_{k-1} \wedge \mathcal{E}^{n-k} \ge 0 \quad \text{in } \Omega
\]
for all real $(1,1)$-forms $\gamma_1, \dots, \gamma_{k-1}$ with $\lambda(\gamma_i) \in \overline{\Gamma}_k$ for $1 \le i \le k-1$.
\end{definition}

If $v$ is $C^2$ and $k$-subharmonic, then $v$ is $k$-admissible. More generally, by Lemma 3.7 in \cite{Lu2013JFA}, $v$ is a viscosity subsolution of $\sigma_k = 0$ if and only if $v$ is $k$-subharmonic.

The following lemma, which generalizes the G\r{a}rding inequality, is particularly useful for $k$-subharmonic functions (see \cite[page 1444, (3.4)]{blocki05}).

\begin{lemma}\label{garding}
Let $v, v_1, \dots, v_{k-1}$ be continuous $k$-subharmonic functions. Then
\[
(\sqrt{-1}\partial\bar{\partial} v) \wedge (\sqrt{-1}\partial\bar{\partial} v_1) \wedge \cdots \wedge (\sqrt{-1}\partial\bar{\partial} v_{k-1}) \wedge \mathcal{E}^{n-k} \ge 0
\]
in the sense of currents. In particular,
\[
\sqrt{-1}\partial v \wedge \bar{\partial} v \wedge (\sqrt{-1}\partial\bar{\partial} v)^{k-1} \wedge \mathcal{E}^{n-k} \ge 0
\]
in the sense of currents.
\end{lemma}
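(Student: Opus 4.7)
The plan is to first establish both inequalities for smooth $k$-subharmonic functions using G\aa rding's theory of hyperbolic polynomials, then extend to continuous $k$-subharmonic functions through mollification and Blocki's convergence results.

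For the first inequality in the smooth case, recall that $\sigma_k$ is a hyperbolic polynomial on $\mathbb{R}^n$ with connected hyperbolicity cone $\Gamma_k$ (hyperbolic in the direction $\mathbf{1}=(1,\dots,1)$). G\aa rding's theorem then asserts that its polarized $k$-linear form $\widetilde{\sigma}_k(\mu^{(1)},\dots,\mu^{(k)})$ is non-negative whenever every $\mu^{(j)}\in \overline{\Gamma}_k$. Translating to $(1,1)$-forms, at any point $z_0$ there is a positive constant $c_{n,k}$ with
\[
\sqrt{-1}\partial\bar\partial v \wedge \sqrt{-1}\partial\bar\partial v_1 \wedge \cdots \wedge \sqrt{-1}\partial\bar\partial v_{k-1} \wedge \mathcal{E}^{n-k} = c_{n,k}\,\widetilde{\sigma}_k\bigl(\lambda(v),\lambda(v_1),\dots,\lambda(v_{k-1})\bigr)\,\mathrm{dV},
\]
which is non-negative since every eigenvalue vector lies in $\overline{\Gamma}_k$.

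For the ``in particular'' statement in the smooth case, I diagonalize $\sqrt{-1}\partial\bar\partial v = \sum_i \lambda_i\sqrt{-1}dz^i\wedge d\bar z^i$ at the point of interest with $\lambda\in\overline{\Gamma}_k$. Expanding $\sqrt{-1}\partial v\wedge\bar\partial v = \sum_{i,j} v_{\bar i}v_j\,\sqrt{-1}dz^i\wedge d\bar z^j$ and noting that in the top-degree wedge against $(\sqrt{-1}\partial\bar\partial v)^{k-1}\wedge\mathcal{E}^{n-k}$ the off-diagonal ($i\ne j$) terms vanish for type reasons, a short combinatorial computation yields
\[
\sqrt{-1}\partial v\wedge\bar\partial v\wedge(\sqrt{-1}\partial\bar\partial v)^{k-1}\wedge\mathcal{E}^{n-k} = C_{n,k}\sum_{i=1}^n |v_i|^2\,\sigma_{k-1}(\lambda_{\hat i})\,\mathrm{dV},
\]
where $\lambda_{\hat i}$ denotes $\lambda$ with the $i$-th entry deleted. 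Since $\sigma_{k-1}(\lambda_{\hat i}) = \partial_{\lambda_i}\sigma_k(\lambda)\ge 0$ on $\overline{\Gamma}_k$, the inequality holds pointwise.

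To pass from smooth to continuous $k$-subharmonic $v, v_1,\dots,v_{k-1}$, I mollify by $v^\epsilon = v\ast\rho_\epsilon$ and similarly for each $v_i$. Averaging with positive weights preserves $k$-subharmonicity, so the regularizations are smooth, $k$-subharmonic, and converge locally uniformly. By the convergence theorem for mixed wedges of continuous $k$-subharmonic functions (Blocki \cite{blocki05}), the wedge products of the smooth approximations converge weakly as currents to the intended Bedford--Taylor--Blocki defined objects, and non-negativity is preserved under weak limits of positive $(n,n)$-currents. The main technical obstacle is precisely this last convergence step, which rests on Blocki's uniform local mass bounds for mixed wedges of continuous $k$-subharmonic functions; once the bounds are in place, standard weak compactness and the pointwise inequalities from the smooth case finish the proof.
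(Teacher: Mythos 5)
The paper does not actually prove this lemma; it simply cites B{\l}ocki \cite[p.~1444, (3.4)]{blocki05}, and your argument is essentially the standard proof behind that citation: G\aa rding's inequality in the smooth case, followed by mollification and the weak convergence theory for mixed wedge products of continuous $k$-subharmonic functions. So the overall route is sound and is, in substance, the same argument the paper outsources to B{\l}ocki.

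One step, however, is wrong as written. The displayed pointwise identity expressing the mixed wedge product as $c_{n,k}\,\widetilde{\sigma}_k\bigl(\lambda(v),\lambda(v_1),\dots,\lambda(v_{k-1})\bigr)\,\mathrm{dV}$ cannot be correct: the mixed product of $(1,1)$-forms that are not simultaneously diagonalizable is not a function of their individual eigenvalue vectors. Already for $n=k=2$, the forms with coefficient matrices $\mathrm{diag}(1,0)$ and $\mathrm{diag}(0,1)$ have the same eigenvalue data as two copies of $\mathrm{diag}(1,0)$, yet the first pair has strictly positive wedge product and the second has zero. What you actually need is G\aa rding's inequality for the hyperbolic polynomial $A\mapsto\sigma_k(\lambda(A))$ on Hermitian matrices (hyperbolic with respect to $I$, since $\lambda(A+tI)=\lambda(A)+t\mathbf{1}$), whose complete polarization --- a mixed-discriminant-type form evaluated at the complex Hessian matrices of $v,v_1,\dots,v_{k-1}$ --- is exactly what the wedge product computes up to a positive dimensional constant; nonnegativity then follows because each Hessian lies in the closure of the hyperbolicity cone. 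With that correction the smooth case is fine, and your direct diagonalization computation for the ``in particular'' statement is correct, since $\sigma_{k-1}(\lambda_{\hat i})=\partial_{\lambda_i}\sigma_k(\lambda)\ge 0$ on $\overline{\Gamma}_k$. Finally, in the passage to merely continuous $v$ you assert, rather than argue, that the gradient currents $\sqrt{-1}\partial v^\epsilon\wedge\bar\partial v^\epsilon\wedge(\sqrt{-1}\partial\bar\partial v^\epsilon)^{k-1}\wedge\mathcal{E}^{n-k}$ converge to the corresponding object for $v$; this needs a word (e.g.\ via the identity $2\sqrt{-1}\partial v\wedge\bar\partial v=\sqrt{-1}\partial\bar\partial (v^2)-2v\,\sqrt{-1}\partial\bar\partial v$ together with B{\l}ocki's convergence theorems), though it is standard, and in the paper's actual application the relevant $v$ is even $C^1$.
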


We now define weak solutions in the sense of currents.

\begin{definition}
    Let $v$ be a continuous $(n-1)$-subharmonic function in $\mathbb{C}^n$ such that $v + |z_{n+1}|^2$ is $n$-subharmonic in $\mathbb{C}^{n+1}$. We say that $v$ is a \emph{weak solution} (respectively, \emph{weak subsolution}) of the equation $\sigma_n + \sigma_{n-1} = 0$ if it satisfies
    \[
    (\sqrt{-1}\partial\bar{\partial} v)^n + n(\sqrt{-1}\partial\bar{\partial} v)^{n-1} \wedge \mathcal{E} = 0 \quad (\text{respectively, } \ge 0)
    \]
    in the sense of currents.
\end{definition}

\begin{remark}
     Note that the blow-up solution $u^{\infty}$ constructed below satisfies all the conditions in the above theorem.
    On the other hand, if $v$ is of class $C^2$, then $\sigma_n(\sqrt{-1}\partial\bar\partial v)+ \sigma_{n-1}(\sqrt{-1}\partial\bar\partial v) = 0$ (respectively, $\ge 0$) is equivalent to $    (\sqrt{-1}\partial\bar{\partial} v)^n + n(\sqrt{-1}\partial\bar{\partial} v)^{n-1} \wedge \mathcal{E} = 0$ (respectively, $\ge 0$).
\end{remark}

Let $\eta \in C^{\infty}(\mathbb{C}^n, \mathbb{R})$ be a mollifier, defined by
\begin{equation*}
    \eta(z) = 
    \begin{cases}
        C_n e^{\frac{1}{|z|^2 - 1}}, & |z| < 1, \\
        0, & |z| \geq 1,
    \end{cases}
\end{equation*}
where the constant $C_n$ is chosen so that $\int_{\mathbb{C}^n} \eta(z) \, \mathcal{E}^n(z) = 1$. Similarly, let $\eta_0$ be a mollifier on $\mathbb{C}$.
 
For a function $w: \mathbb{C}^n \rightarrow \mathbb{R}$, we define its mollification by
\begin{align}\label{mollification}
    [w]_{r}(z) = \int_{\mathbb{C}^n} w(z + ry) \eta(y) \, \mathcal{E}^n(y).
\end{align}
For the function $\tilde{w}(\tilde{z}) = w(z) + |z_{n+1}|^2$ defined on $\mathbb{C}^{n+1}$, where $\tilde{z} = (z, z_{n+1})$, we define 
\[
\langle\tilde{w}\rangle_{r}(\tilde{z}) = \frac{1}{n+1} \int_{\mathbb{C}^{n+1}} \tilde{w}(\tilde{z} + r\tilde{y}) \tilde{\eta}(\tilde{y}) \, \widetilde{\mathcal{E}}^{n+1}(\tilde{y}),
\]
where $\tilde{\eta}(\tilde{y}) = \eta_0(y_{n+1})\eta(y)$ with $\tilde{y} = (y, y_{n+1})$. A direct computation shows that
\begin{align*}
    \langle\tilde{w}\rangle_{r}(\tilde{z}) = [w]_{r}(z) + |z_{n+1}|^2 + r^2 M,
    \end{align*}
 where
    $M := \int_{\mathbb{C}} |y_{n+1}|^2 \eta_0(y_{n+1}) \, \sqrt{-1} dy_{n+1} \wedge d\bar{y}_{n+1}$.

The following lemma establishes key properties of weak solutions to the equation $\sigma_n + \sigma_{n-1} = 0$.

\begin{lemma}\label{keylemmavr}
    Let $v$ be a weak solution (in the sense of currents) to $\sigma_{n} + \sigma_{n-1} = 0$. Assume $\tilde v (\tilde z) = v(z) + |z_{n+1}|^2$ is $n$-subharmonic in $\mathbb{C}^{n+1}$.
    Then the following properties hold:
    \begin{enumerate}[label=(\roman*)]
        \item The function $\tilde{v}(\tilde{z}) $ is a weak solution to $\sigma_{n}=0$ and $\langle\tilde{v}\rangle_{r}$ is $n$-subharmonic in $\mathbb{C}^{n+1}$.
        \item The mollification $[v]_{r}$ is a smooth subsolution to the same equation.
        \item If $0\le v\le 1$,  $\frac{1}{2}v^2$ is a subsolution to the same equation.
        \item {If $w$ is a subsolution to the same equation, then $\frac{1}{2}(v + w)$ is also a subsolution.}
    \end{enumerate}
\end{lemma}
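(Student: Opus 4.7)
The whole lemma rests on the elementary identity $\sigma_k(\lambda_1,\ldots,\lambda_n,1)=\sigma_k(\lambda_1,\ldots,\lambda_n)+\sigma_{k-1}(\lambda_1,\ldots,\lambda_n)$, which shows that lifting $w$ on $\mathbb{C}^n$ to $\tilde w(z,z_{n+1}):=w(z)+|z_{n+1}|^2$ converts $\sigma_n+\sigma_{n-1}$ on $\mathbb{C}^n$ into $\sigma_n$ on $\mathbb{C}^{n+1}$. My plan is to exploit this lift systematically, reducing each of the four claims to a positivity fact about $\sigma_n=0$ on $\mathbb{C}^{n+1}$ where Garding's inequality (Lemma \ref{garding}) applies.

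For (i), I would compute $\sqrt{-1}\partial\bar\partial\tilde v=\sqrt{-1}\partial\bar\partial v+e^{n+1}$ with $e^{n+1}:=\sqrt{-1}dz_{n+1}\wedge d\bar z_{n+1}$ and expand, using $(e^{n+1})^2=0$ and the vanishing of $(n+1,n+1)$-forms involving only $z_1,\ldots,z_n$, to get
\[
(\sqrt{-1}\partial\bar\partial\tilde v)^n\wedge\widetilde{\mathcal{E}}=\bigl[(\sqrt{-1}\partial\bar\partial v)^n+n(\sqrt{-1}\partial\bar\partial v)^{n-1}\wedge\mathcal{E}\bigr]\wedge e^{n+1}=0
\]
from the weak equation for $v$. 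The $n$-subharmonicity of $\tilde v$ follows from $\sigma_k(\tilde v)=\sigma_k(v)+\sigma_{k-1}(v)\ge 0$ for $1\leq k\leq n$ (using $(n-1)$-subharmonicity of $v$ for $k\leq n-1$ and the weak equation when $k=n$), and $\langle\tilde v\rangle_r$ inherits $n$-subharmonicity by the standard convolution preservation. Claim (ii) is then immediate: since $\sqrt{-1}\partial\bar\partial\langle\tilde v\rangle_r=\sqrt{-1}\partial\bar\partial[v]_r+e^{n+1}$, the same identity yields $\sigma_n([v]_r)+\sigma_{n-1}([v]_r)=\sigma_n(\langle\tilde v\rangle_r)\geq 0$ pointwise. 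Claim (iv) reduces similarly: because $\tfrac12(\tilde v+\tilde w)=\widetilde{(v+w)/2}$, the binomial expansion
\[
\Bigl(\sqrt{-1}\partial\bar\partial\tfrac{\tilde v+\tilde w}{2}\Bigr)^n\wedge\widetilde{\mathcal{E}}=\frac1{2^n}\sum_{k=0}^n\binom{n}{k}(\sqrt{-1}\partial\bar\partial\tilde v)^k\wedge(\sqrt{-1}\partial\bar\partial\tilde w)^{n-k}\wedge\widetilde{\mathcal{E}}
\]
has pure endpoint terms non-negative by hypothesis and every mixed term non-negative by Garding applied to the $n$-subharmonic $\tilde v,\tilde w$.

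The main obstacle is (iii). My plan is to first prove the inequality for the smooth mollification $u:=[v]_r$, which is a smooth subsolution by (ii), and then pass to the limit using Lemma \ref{Lemma4.1}(i). Setting $\alpha=\sqrt{-1}\partial\bar\partial u$ and $\beta=\sqrt{-1}\partial u\wedge\bar\partial u\geq 0$, the relation $\sqrt{-1}\partial\bar\partial(\tfrac12u^2)=u\alpha+\beta$ together with $\beta^2=0$ gives
\[
\bigl(\sqrt{-1}\partial\bar\partial\tfrac12u^2\bigr)^n+n\bigl(\sqrt{-1}\partial\bar\partial\tfrac12u^2\bigr)^{n-1}\wedge\mathcal{E}=u^{n-1}\bigl[u\alpha^n+n\alpha^{n-1}\wedge\mathcal{E}\bigr]+nu^{n-2}\bigl[u\alpha^{n-1}\wedge\beta+(n-1)\alpha^{n-2}\wedge\beta\wedge\mathcal{E}\bigr].
\]
The hard part is that the powers of $u$ fluctuate in sign and the subsolution inequality $\alpha^n+n\alpha^{n-1}\wedge\mathcal{E}\geq 0$ alone does not control the $\beta$-block. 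To close this gap I would apply Garding on $\mathbb{C}^{n+1}$ to the $n$-subharmonic lift $\tilde u$; restricting the resulting positivity to $\{z_{n+1}=0\}$ and stripping the extra factor $e^{n+1}$ yields the key pointwise inequality
\[
\sqrt{-1}\partial u\wedge\bar\partial u\wedge\bigl[(\sqrt{-1}\partial\bar\partial u)^{n-1}+(n-1)(\sqrt{-1}\partial\bar\partial u)^{n-2}\wedge\mathcal{E}\bigr]\geq 0
\]
on $\mathbb{C}^n$, which is exactly the positivity needed for the $\beta$-block. Rewriting each bracket as a convex combination of the form $u\cdot(\text{subsolution or lifted Garding term})+(1-u)\cdot(\text{standard Garding term})$, each piece becomes a sum of non-negative currents provided $0\leq u\leq 1$. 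The delicate point, where I expect to spend the most effort, is justifying this normalization of $u$ by a preliminary shift (legitimate since $v$ is $C^1$-bounded in the blow-up setting and the equation is invariant under additive constants) and, if necessary, absorbing a scalar into the correction $-\tau_0|z|^2$ that appears in the paper's subsequent use of $\tfrac13[(v^\epsilon)^2]_r-\tau_0|z|^2$.
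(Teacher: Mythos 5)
Your proposal is correct and follows essentially the same route as the paper: the lift $w\mapsto w+|z_{n+1}|^2$ turning $\sigma_n+\sigma_{n-1}$ on $\mathbb{C}^n$ into $\sigma_n$ on $\mathbb{C}^{n+1}$, Garding's inequality (Lemma \ref{garding}) for the mixed positivity, and the normalization $0\le v\le 1$ (which the paper also uses, silently, in part (iii)) to make the convex-combination decomposition work. The only differences are presentational: the paper expands $(\sqrt{-1}\partial\bar\partial(\tfrac12 v^2+|z_{n+1}|^2))^k\wedge\widetilde{\mathcal{E}}^{n+1-k}$ directly upstairs for $v$ itself, so the current-sense inequality is obtained without your mollify-and-pass-to-the-limit step (which, via Lemma \ref{Lemma4.1}, only yields the viscosity form and would need the $n$-subharmonicity characterization to be upgraded), whereas you carry out the equivalent computation downstairs for $[v]_r$.
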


 \begin{proof}
We first note that $[v]_{r}$ is $(n-1)$-subharmonic since $v$ is $(n-1)$-subharmonic. To prove $(i)$, we will show that $\tilde{v}$ satisfies the equation: $\sigma_{n}=0$ on $\mathbb{C}^{n+1}$ in the current sense.

Let $e_{n+1} = \sqrt{-1}dy_{n+1} \wedge d\bar{y}_{n+1}$ and let $\widetilde{\mathcal{E}} = e_{n+1}+ \mathcal{E}$ be the standard K\"ahler metric on $\mathbb{C}^{n+1}$. For any $1 \le k \le n$, we have:
\begin{align*}
   & (\sqrt{-1}\partial\bar{\partial} \tilde{v})^{k} \wedge \widetilde{\mathcal{E}}^{n+1-k}\\
    =& C^k_{n+1} (\sqrt{-1}\partial\bar{\partial} v + e_{n+1})^{k} \wedge (\mathcal{E} + e_{n+1})^{n+1-k} \\
    =& \left((\sqrt{-1}\partial\bar{\partial} v)^{k} + k(\sqrt{-1}\partial\bar{\partial} v)^{k-1} \wedge e_{n+1}\right) \wedge \left(\mathcal{E}^{n-k+1} + (n+1-k)\mathcal{E}^{n-k} \wedge e_{n+1}\right) \\
    =& \left((n-k+1)(\sqrt{-1}\partial\bar{\partial} v)^{k} \wedge \mathcal{E}^{n-k} + k(\sqrt{-1}\partial\bar{\partial} v)^{k-1} \wedge \mathcal{E}^{n-k+1}\right) \wedge e_{n+1}.
\end{align*}
In particular, $(\sqrt{-1}\partial\bar\partial\tilde v)^n\wedge\mathcal{\tilde E }=0$ since $v$ solves
\[
(\sqrt{-1}\partial\bar{\partial} v)^{n} + n(\sqrt{-1}\partial\bar{\partial} v)^{n-1} \wedge \mathcal{E} = 0.
\]
Since $\tilde v$ is $n$-subharmonic, $\langle\tilde{v}\rangle_{r}$ is also $n$-subharmonic in $\mathbb{C}^{n+1}$.
 Note that $\langle\tilde{v}\rangle_{r} = [v]_{r}(z) + |z_{n+1}|^2 + r^2 M$; it follows that $[v]_{r}$ is a subsolution to the equation $\sigma_n + \sigma_{n-1} = 0$. This establishes $(ii)$.

To prove $(iv)$, let $W = \frac{v + w}{2}$ and $\widetilde{W}(\tilde{z}) = W(z) + |z_{n+1}|^2$. By $(i)$,  $\widetilde{W} = \frac{\tilde{v} + \tilde{w}}{2}$ is $n$-subharmonic in $\mathbb{C}^{n+1}$ and thus $W$ is a subsolution.

We now prove $(iii)$ using $(i)$. Define $\tilde{F}(\tilde{z}) = \frac{1}{2}v(z)^2 + |z_{n+1}|^2$. Then for any $1 \le k \le n$, we have:
\begin{align*}
&(\sqrt{-1}\partial\bar{\partial} \tilde{F})^{k} \wedge \widetilde{\mathcal{E}}^{n+1-k} \\
=& \left(v\sqrt{-1}\partial\bar{\partial} \tilde{v} + (1 - v)e_{n+1} + \sqrt{-1}\partial v \wedge \bar{\partial} v\right)^k \wedge \widetilde{\mathcal{E}}^{n+1-k} \\
=& v^k (\sqrt{-1}\partial\bar{\partial} \tilde{v})^k \wedge \widetilde{\mathcal{E}}^{n+1-k} \\
&+ k v^{k-1} (\sqrt{-1}\partial\bar{\partial} \tilde{v})^{k-1} \wedge \left((1 - v)e_{n+1} + \sqrt{-1}\partial v \wedge \bar{\partial} v\right) \wedge \widetilde{\mathcal{E}}^{n+1-k} \\
&+ k(k-1)(1 - v) v^{k-2} (\sqrt{-1}\partial\bar{\partial} \tilde{v})^{k-2} \wedge e_{n+1} \wedge (\sqrt{-1}\partial v \wedge \bar{\partial} v) \wedge \widetilde{\mathcal{E}}^{n+1-k}.
\end{align*}
Since $\tilde{v}$ is $n$-subharmonic in $\mathbb{C}^{n+1}$ and $0 \le v \le 1$, the above  is nonnegative by Lemma \ref{garding}.
\end{proof}

We immediately conclude that a weak solution (subsolution) in the sense of currents is also a viscosity solution (subsolution).

\begin{lemma}
    Let $v$ be a weak subsolution in the sense of currents to the equation $\sigma_{n} + \sigma_{n-1} = 0$. Then $v$ is also a viscosity subsolution.
\end{lemma}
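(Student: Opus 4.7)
The strategy uses the lift $\tilde v(\tilde z)=v(z)+|z_{n+1}|^2$ from Lemma \ref{keylemmavr}(i), which makes $\tilde v$ an $n$-subharmonic function on $\mathbb{C}^{n+1}$ satisfying $(\sqrt{-1}\partial\bar\partial\tilde v)^{n}\wedge\widetilde{\mathcal E}=0$ (respectively $\geq 0$) in the sense of currents. The bridge between the two equations is the algebraic identity
\[
\sigma_n(\mu_1,\dots,\mu_n,1)=\sigma_n(\mu)+\sigma_{n-1}(\mu),\qquad \mu\in\mathbb{R}^n,
\]
which arises because for any $C^2$ test $\varphi$ on $\Omega\subset\mathbb{C}^n$ the canonical extension $\tilde\varphi(\tilde z):=\varphi(z)+|z_{n+1}|^2$ has $\sqrt{-1}\partial\bar\partial\tilde\varphi(z_0,0)$ with eigenvalues $(\mu,1)$, where $\mu=\lambda(\sqrt{-1}\partial\bar\partial\varphi(z_0))$. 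Moreover $(\tilde\varphi-\tilde v)(z,z_{n+1})=(\varphi-v)(z)$, so a local minimum or maximum of $\varphi-v$ at $z_0$ lifts to a local minimum or maximum of $\tilde\varphi-\tilde v$ at $(z_0,0)$.

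For the subsolution part, fix $\varphi\in C^2$ with $\varphi-v$ attaining a local minimum at $z_0$. Since $\tilde v$ is $n$-subharmonic on $\mathbb{C}^{n+1}$, Lemma 3.7 of \cite{Lu2013JFA} (cited above the statement) ensures that $\tilde v$ is a viscosity subsolution of $\sigma_n=0$ on $\mathbb{C}^{n+1}$. Applying this at the local-minimum point $(z_0,0)$ of $\tilde\varphi-\tilde v$ yields $\sigma_n(\sqrt{-1}\partial\bar\partial\tilde\varphi(z_0,0))\geq 0$, which by the identity above is exactly $\sigma_{n-1}(\sqrt{-1}\partial\bar\partial\varphi(z_0))+\sigma_n(\sqrt{-1}\partial\bar\partial\varphi(z_0))\geq 0$. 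Alternatively, one can argue via the smooth mollifications $[v]_r$: by Lemma \ref{keylemmavr}(ii) they are smooth classical subsolutions, and the same $\tilde\varphi$-lift (applied to the smooth $n$-subharmonic $\langle\tilde v\rangle_r$, together with the monotonicity of $\sigma_n$ on $\overline\Gamma_n$) shows each $[v]_r$ is a viscosity subsolution, so Lemma \ref{Lemma4.1}(i) passes to the limit.

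For the solution case I additionally verify the viscosity supersolution property. Let $\varphi$ be $(n-1)$-admissible with $\varphi-v$ attaining a local maximum at $z_0$. Suppose for contradiction that $\sigma_{n-1}(\sqrt{-1}\partial\bar\partial\varphi(z_0))+\sigma_n(\sqrt{-1}\partial\bar\partial\varphi(z_0))>0$. Combined with $\mu\in\overline\Gamma_{n-1}$ and the identity $\sigma_k(\mu,1)=\sigma_k(\mu)+\sigma_{k-1}(\mu)$ (with $\sigma_0=1$), this gives $\sigma_k(\mu,1)>0$ for every $1\leq k\leq n$, so $\tilde\varphi$ is $n$-admissible in a neighborhood of $(z_0,0)$. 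The viscosity-supersolution property of $\tilde v$ for $\sigma_n=0$ on $\mathbb{C}^{n+1}$ applied at the local-maximum point $(z_0,0)$ then forces $\sigma_n(\sqrt{-1}\partial\bar\partial\tilde\varphi(z_0,0))=\sigma_n(\mu)+\sigma_{n-1}(\mu)\leq 0$, a contradiction.

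The main obstacle is justifying that $\tilde v$ is indeed a viscosity supersolution of $\sigma_n=0$ on $\mathbb{C}^{n+1}$, since Lu's cited equivalence covers only the subsolution direction. This is the step that genuinely uses the vanishing $(\sqrt{-1}\partial\bar\partial\tilde v)^n\wedge\widetilde{\mathcal E}=0$ rather than mere $n$-subharmonicity. I would close this gap via the pluripotential-to-viscosity correspondence for degenerate complex Hessian equations (classical for complex Monge--Amp\`ere by Bedford--Taylor, and extended to $\sigma_k$ in the spirit of Dinew--Kolodziej \cite{dk2017ajm} and Harvey--Lawson): if the supersolution inequality failed at some $n$-admissible $\tilde\psi$ touching $\tilde v$ from above, a standard barrier construction combined with the $\Gamma_n$-comparison principle (in the spirit of Lemma \ref{Lemma4.2}) would produce a smooth strict supersolution whose testing against a nonnegative cutoff form contradicts the vanishing of $(\sqrt{-1}\partial\bar\partial\tilde v)^n\wedge\widetilde{\mathcal E}$ as a current.
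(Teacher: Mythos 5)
Your subsolution half is the paper's own argument: lift $v$ to $\tilde v=v+|z_{n+1}|^2$, use Lemma \ref{keylemmavr}(i) and Lu's equivalence to see that $\tilde v$ is a viscosity subsolution of $\sigma_n=0$ on $\mathbb{C}^{n+1}$, lift the test function, and use $\sigma_n(\mu,1)=\sigma_n(\mu)+\sigma_{n-1}(\mu)$. For the supersolution half, which the paper compresses into ``follows similarly,'' your contradiction scheme is the right way to make that precise, with one small caveat: from $\mu\in\overline\Gamma_{n-1}$ and $\sigma_n(\mu)+\sigma_{n-1}(\mu)>0$ you directly obtain only $\sigma_k(\mu,1)\ge 0$ for $2\le k\le n-1$, not strict positivity. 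The conclusion $(\mu,1)\in\Gamma_n$ is nevertheless true, but it needs the standard cone fact that $\sigma_j(\lambda)\ge 0$ for $j<k$ together with $\sigma_k(\lambda)>0$ forces $\lambda\in\Gamma_k$ (because $\lambda+t(1,\dots,1)\in\Gamma_k$ for every $t>0$, so $\lambda\in\overline\Gamma_k$, and $\partial\Gamma_k\subset\{\sigma_k=0\}$); this should be said.

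The genuine gap is the one you flag yourself: that $\tilde v$ is a viscosity supersolution of $\sigma_n=0$ on $\mathbb{C}^{n+1}$, and your plan to close it would not work as written. Lemma \ref{Lemma4.2} cannot be invoked even ``in spirit'': it is a viscosity comparison principle that presupposes a viscosity solution as one of its inputs, which is exactly what is to be proved here, so the argument would be circular; moreover, testing the vanishing current $(\sqrt{-1}\partial\bar\partial\tilde v)^n\wedge\widetilde{\mathcal E}$ against a nonnegative cutoff yields no contradiction, and for the supersolution property the relevant test functions touch $\tilde v$ from below, so the competitor you need is a strict \emph{sub}solution rather than a supersolution. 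The correct tool is the pluripotential comparison principle for $m$-subharmonic functions (Błocki \cite{blocki05}; see also Lu \cite{Lu2013JFA}): if an $n$-admissible $C^2$ function $\psi$ touches $\tilde v$ from below at $p_0$ with $\sigma_n(\sqrt{-1}\partial\bar\partial\psi)(p_0)>0$, then $\sqrt{-1}\partial\bar\partial\psi>0$ near $p_0$, and $w:=\psi+\epsilon(\delta^2-|\tilde z-p_0|^2)$ satisfies $(\sqrt{-1}\partial\bar\partial w)^n\wedge\widetilde{\mathcal E}\ge c\,\widetilde{\mathcal E}^{n+1}$ on $B_\delta(p_0)$, $w\le\tilde v$ on $\partial B_\delta(p_0)$, and $w(p_0)>\tilde v(p_0)$; comparing the Hessian measures of $w$ and $\tilde v$ on the nonempty open set $\{w>\tilde v\}$ then contradicts $(\sqrt{-1}\partial\bar\partial\tilde v)^n\wedge\widetilde{\mathcal E}=0$. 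With that substitution (and the cone fact above) your proof is complete and coincides with the paper's intended argument, which itself leaves the supersolution case to the reader.
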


\begin{proof}
   Since $\tilde{v} = v + |z_{n+1}|^2$ is $n$-subharmonic in $\mathbb{C}^{n+1}$,
it follows from Lemma 3.7 \cite{Lu2013JFA} that $\tilde{v}$ is a viscosity subsolution to $\sigma_n = 0$ in $\mathbb{C}^{n+1}$.
    
    Now, let $\varphi$ be any $C^2$ function and let $z_0$ be a local minimum point of $\varphi - v$. Consider the function $\tilde{\varphi} = \varphi + |z_{n+1}|^2$. Then $(z_0, 0)$ is a local minimum point of $\tilde{\varphi} - \tilde{v}$. Since $\tilde{v}$ is a viscosity subsolution to $\sigma_n = 0$, we have
    \[
    \sigma_n(\sqrt{-1}\partial\bar{\partial} \tilde{\varphi})(z_0, 0) \ge 0.
    \]
    Observing that
    \[
    \sigma_n(\sqrt{-1}\partial\bar{\partial} \tilde{\varphi})(z_0, 0) = \sigma_n(\sqrt{-1}\partial\bar{\partial} \varphi)(z_0) + \sigma_{n-1}(\sqrt{-1}\partial\bar{\partial} \varphi)(z_0),
    \]
    we conclude that $v$ is a viscosity subsolution.
\end{proof}

 Based on the above lemmas and following arguments similar to those in 
 \cite{dk2017ajm, szek2018}
 , we prove the following Liouville-type theorem. 
  Since in this case the equation is not homogeneous, we also need to use the complex Hessian bound of $[v]_{r}$.
\begin{theorem}\label{LiouTh}
    Let $v: \mathbb{C}^n \rightarrow \mathbb{R}$ be a $C^{0,1}$ function that is $(n-1)$-subharmonic and $\tilde v=v+|z_{n+1}|^2$ is $n$-subharmonic. Assume there exists a positive constant $C$ with $\|v\|_{C^1(\mathbb{C}^n)} \le C$ and $\Delta v \le C$ in the weak sense (see Remark~\ref{vlaplacebound} for details). If $v$ is a weak solution in the sense of currents and a viscosity supersolution to the equation
    \begin{align}\label{nn-1equation}
        \sigma_{n-1}(\sqrt{-1}\partial\bar{\partial} v) + \sigma_{n}(\sqrt{-1}\partial\bar{\partial} v) = 0,
    \end{align}
    then $v$ must be a constant.
\end{theorem}

\begin{remark}\label{vlaplacebound}
    The condition $\Delta v \le C$ in the weak sense means that there exists a constant $C > 0$ such that for every nonnegative test function $f \in C_c^{\infty}(\mathbb{C}^n)$ with $\int_{\mathbb{C}^n} f(z) \, \mathcal{E}^n(z) = 1$, we have
    \[
    \int_{\mathbb{C}^n} v \, \sqrt{-1}\partial\bar{\partial} f \wedge \mathcal{E}^{n-1} \le C.
    \]
    From this condition, it follows immediately that the regularization $[v]_r$ has bounded Laplacian and hence bounded complex Hessian.
\end{remark}
 	
\begin{proof}[{Proof of Theorem \ref{LiouTh}}]
    Assume that $v$ is not constant. Then $\inf_{\mathbb{C}^n} v < \sup_{\mathbb{C}^n} v$.
    By scaling, we may assume that $v$ satisfies the following normalized conditions:
    \begin{equation*}
        \begin{cases}
            \sup_{\mathbb{C}^n} v = 1, \quad \inf_{\mathbb{C}^n} v = 0, \\
            |\nabla v| \le C_1 := (\sup_{\mathbb{C}^n} v - \inf_{\mathbb{C}^n} v)^{\frac{1}{2}}, \quad \Delta v \le C, \\
            \sigma_{n-1}(\sqrt{-1}\partial\bar{\partial} v) + \sigma_{n}(\sqrt{-1}\partial\bar{\partial} v) = 0.
        \end{cases}
    \end{equation*}
    Indeed, the function $\tilde{v}(z) = C_{1}^{-2}({v(C_1z) - \inf_{\mathbb{C}^n} v})$ satisfies these conditions. 
   
    For sufficiently small $\epsilon > 0$, let $v^{\epsilon} = [v]_{\epsilon}$. 

 \textbf{Case 1}: There exist a positive constant $\rho$, sequences $\epsilon_k \to 0$, $x_k \in \mathbb{C}^n$, $r_k \to \infty$, and a sequence of unit $(1,0)$-vectors $\xi_k$ such that
    \begin{align}
        \frac{1}{2}\Bigl(\Bigl[\frac{(v^{\epsilon})^2}{3}\Bigr]_{r}(x_k) + [v]_{\rho}(x_k) - \frac{7}{6}\Bigr) &\ge v(x_k) \label{w-v} 
\end{align}
\begin{align}
        \lim_{k \to \infty} \int_{B_{r_k}(x_k)} |v_{\xi_k}^{\epsilon}|^2(z) \, \mathcal{E}^n(z) &= 0. \label{integral-zero}
    \end{align}
    Following the arguments in \cite{szek2018} and \cite{dk2017ajm}, we obtain an $(n-1)$-subharmonic function $v^{\infty}$ that is also a solution to the same equation and is independent of $z_n$. This implies that $v^{\infty}$ is plurisubharmonic in $\mathbb{C}^{n-1}$. Since $v^{\infty}$ is bounded, it must be constant, contradicting \eqref{w-v}.

 \textbf{Case 2}: Suppose Case 1 does not hold. Then for every $\rho > 0$, there exists a constant $C_{\rho} > 0$ with the following property: for all $\epsilon < C_{\rho}^{-1}$, $r > C_{\rho}$, $x \in \mathbb{C}^{n}$, and any unit $(1,0)$-vector $\xi$, if $\frac{1}{2}\bigl(\bigl[\frac{(v^{\epsilon})^2}{3}\bigr]_{r}(x) + [v]_{\rho}(x) -\frac{7}{6}\bigr) - v(x) \ge 0$, then
    \begin{align}\label{secondcase}
        \int_{B_{r}(x)} |v^{\epsilon}_{\xi}|^2(z) \, \mathcal{E}^n(z) \ge c_0 := C_{\rho}^{-1}.
    \end{align}

   Define the open set
    \begin{equation*}
        \Omega := \Bigl\{z \in \mathbb{C}^n : v(z) <w:=\frac{1}{2}\Bigl(\Bigl[\frac{(v^{\epsilon})^2}{3}\Bigr]_{r}(z) -{\tau_0}|z|^2+ [v]_{\rho}(z) - \frac{7}{6}\Bigr)  \Bigr\},
    \end{equation*}
    where $\tau_0>0$ is a constant to be determined later.
    
    Since $0 \le v \le 1$, the set $\Omega$ is bounded. As $\inf_{\mathbb{C}^n} v = 0$, we may  translate so that $v(0) < \frac{1}{60}$. By Cartan's lemma (see \cite{dk2017ajm}), we choose $\rho$ large enough so that $[v]_{\rho}(0) > \frac{9}{10}$, and choose $r$ large enough so that $\bigl[\frac{(v^\epsilon)^2}{3}\bigr]_{r}(0) > \frac{3}{10}$. Then $0 \in \Omega$.

    We now show that $\frac{1}{2}\bigl(\bigl[\frac{(v^{\epsilon})^2}{3}\bigr]_{r}(z) -{\tau_0}|z|^2+ [v]_{\rho}(z) \bigr)$
     is a subsolution to $\sigma_{n-1} + \sigma_{n} = 0$ in $\Omega$. Assuming this, the comparison principle (Lemma \ref{Lemma4.2}) implies that $v \ge w$ in $\Omega$, contradicting the definition of $\Omega$.

    By Lemma \ref{keylemmavr} $(ii)$, we know that $[v]_{r}$, $[v]_{\rho}$, $\frac{(v^{\epsilon})^2}{2}$, and $\bigl[\frac{(v^{\epsilon})^2}{2}\bigr]_{r}$ are all subsolutions to \eqref{nn-1equation}.

    From \eqref{secondcase} and following \cite{dk2017ajm, szek2018}, there exists a positive constant $b_0$, depending only on $c_0$ and $C_n$, such that $\bigl[\frac{(v^{\epsilon})^2}{2}\bigr]_{r} - b_0|z|^2$ is $(n-1)$-subharmonic in $\Omega$. 
Then by the concavity of $\sigma_{n-1}^{1/(n-1)}$ in $\Gamma_{n-1}$, for any $z\in \Omega$, we have 
    \[
    \sigma_{n-1}^{1/(n-1)}\Bigl(\sqrt{-1}\partial\bar{\partial}\Bigl[\frac{(v^{\epsilon})^2}{2}\Bigr]_{r}\Bigr) \ge (n-1)^{1/(n-1)} b_0,
    \]
    and thus
    \begin{align}\label{b0bound}
        \sigma_{n-1}\Bigl(\sqrt{-1}\partial\bar{\partial}\Bigl[\frac{(v^{\epsilon})^2}{2}\Bigr]_{r}\Bigr) \ge (n-1) b_0^{n-1}.
    \end{align}

    We now show that $F := \bigl[\frac{(v^{\epsilon})^2}{3}\bigr]_{r} - \tau_0|z|^2$ is a subsolution to $\sigma_n + \sigma_{n-1} = 0$ in $\Omega$ for sufficiently small $\tau_0$. Indeed, by \eqref{b0bound}, we have:
    \begin{align*}
        &\sigma_n(\sqrt{-1}\partial\bar{\partial} F) + \sigma_{n-1}(\sqrt{-1}\partial\bar{\partial} F) \\
        &= \sigma_n\Bigl(\sqrt{-1}\partial\bar{\partial} \Bigl[\frac{(v^{\epsilon})^2}{3}\Bigr]_{r}\Bigr) + \sigma_{n-1}\Bigl(\sqrt{-1}\partial\bar{\partial} \Bigl[\frac{(v^{\epsilon})^2}{3}\Bigr]_{r}\Bigr) + O(\tau_0) \\
        &= \Bigl(\frac{2}{3}\Bigr)^n \Bigl(\sigma_n\Bigl(\sqrt{-1}\partial\bar{\partial} \Bigl[\frac{(v^{\epsilon})^2}{2}\Bigr]_{r}\Bigr) + \frac{3}{2} \sigma_{n-1}\Bigl(\sqrt{-1}\partial\bar{\partial} \Bigl[\frac{(v^{\epsilon})^2}{2}\Bigr]_{r}\Bigr)\Bigr)+ O(\tau_0) \\
        &\ge \frac{1}{2}\Bigl(\frac{2}{3}\Bigr)^{n} \sigma_{n-1}\Bigl(\sqrt{-1}\partial\bar{\partial} \Bigl[\frac{(v^{\epsilon})^2}{2}\Bigr]_{r}\Bigr) + O(\tau_0) \\
        &\ge \frac{1}{2}\Bigl(\frac{2}{3}\Bigr)^{n} (n-1) b_0^{n-1} - C\tau_0 > 0,
    \end{align*}
    provided $\tau_0 <\frac{b_0^{n-1}}{2C} \left(\frac{2}{3}\right)^{n}$. Here we have used the fact that $[(v^{\epsilon})^2]_{r}$ has uniform $C^{0,1}$-bounds and complex Hessian bounds.
    
    By Lemma \ref{keylemmavr} $(iv)$, we get 
    $\frac{1}{2}\bigl(\bigl[\frac{(v^{\epsilon})^2}{3}\bigr]_{r}(z) -{\tau_0}|z|^2+ [v]_{\rho}(z) \bigr)$ is a subsolution to the equation $\sigma_{n}+\sigma_{n-1}=0$ in $\Omega$.
\end{proof}

 	\begin{proof}[Proof of the gradient estimate]
 	We prove the gradient estimate by contradiction, following the arguments in \cite{dk2017ajm, szek2018}. Assume that there exist a sequence $t_i \to 0$ and corresponding solutions $u^{t_i}$ to the LYZ equation \eqref{LYZt} such that for points $p_i \in M$, we have
 	
\begin{align}
    \sup_M |\nabla u^{t_i}|_{\omega} = |\nabla u^{t_i}|_{\omega}(p_i) = M_i \to \infty \quad \text{as } i \to \infty.
\end{align}

Since $M$ is compact, by passing to a subsequence, we may assume that $\{p_i\}$ converges to a point $p_0 \in M$. We choose local coordinates $\{w_\alpha\}_{1 \le \alpha \le n}$ in a ball $B_r(p_0)$ such that $p_i \in B_{r/2}(p_0)$ and $\omega(p_0) = \sum_{\alpha} \sqrt{-1} dw_{\alpha} \wedge d\bar{w}_{\alpha}$.

For any $R > 0$, we can choose $i$ sufficiently large such that for all $z \in B_R(0)$, the point $p_i + \frac{z}{M_i}$ lies in $B_{2r/3}(p_0)$. We define the rescaled functions
\[
\hat{u}^{t_i}(z) = u^{t_i}\bigl(p_i + \frac{z}{M_i}\bigr).
\]
We adopt the following notation:
\begin{align*}
    \hat{u}^{t_i}_{\alpha}(z) = \frac{\partial \hat{u}^{t_i}}{\partial z_{\alpha}}(z), \quad
    u^{t_i}_{\alpha}(w) = \frac{\partial u^{t_i}}{\partial w_{\alpha}}(w), 
\end{align*}
\begin{align*}
    \hat{u}^{t_i}_{\alpha\bar{\beta}}(z) = \frac{\partial^2 \hat{u}^{t_i}}{\partial z_{\alpha} \partial \bar{z}_{\beta}}(z), \quad
    u^{t_i}_{\alpha\bar{\beta}}(w) = \frac{\partial^2 u^{t_i}}{\partial w_{\alpha} \partial \bar{w}_{\beta}}(w).
\end{align*}
Then we have the scaling relations:
\begin{align*}
    u^{t_i}_{\alpha}(w) = M_i \hat{u}^{t_i}_{\alpha}(z), \quad
    u^{t_i}_{\alpha\bar{\beta}}(w) = M_i^2 \hat{u}^{t_i}_{\alpha\bar{\beta}}(z).
\end{align*}

By the $C^0$-estimate and the complex Hessian estimate for $u^{t_i}$ (for large $i$), we obtain
\begin{align*}
    \sup_{B_R(0)} \left(|\hat{u}^{t_i}| + |\partial \hat{u}^{t_i}|_{\mathcal{E}} + |\Delta \hat{u}^{t_i}|_{\mathcal{E}}\right) \le C.
\end{align*}
By standard elliptic theory, we have the estimates
\begin{align*}
    |\hat{u}^{t_i}|_{C^{1, \frac{2}{3}}(B_{2R/3}(0))} \le C.
\end{align*}

Passing to a subsequence, we find that $\hat{u}^{t_i} \to u^{\infty}$ in $C^{1, \frac{1}{2}}(B_{R/2}(0))$. Using a diagonal subsequence argument, we conclude that $\hat{u}^{t_i} \to u^{\infty}$ uniformly on compact subsets of $\mathbb{C}^n$, and $u^{\infty}$ is a bounded $C^{1,\frac{1}{2}}$ function defined on $\mathbb{C}^n$.

Since $u^{t_i}$ solves \eqref{LYZt}, by Lemma \ref{yuanlemma}, we have $\sigma_k(\chi + t_i\omega + \sqrt{-1}\partial\bar{\partial} u^{t_i}) \ge 0$ for $1 \le k \le n-1$. It follows that $u^{\infty}$ is $(n-1)$-subharmonic.
By construction, we have $|\partial u^{\infty}(0)| = 1$, so $u^{\infty}$ is nonconstant.

Now, let
\[
a_0 = -\liminf_{i \to \infty} \tan\hat{\theta}(t_i) M_i^2.
\]
Recall that $\hat{\theta}(t) \in (\pi - 2Ct, \pi-Ct)$, so $a_0 \in [0, \infty]$. By passing to a further subsequence, we may assume
\begin{align}\label{a0}
    a_0 = -\lim_{i \to \infty} \tan\hat{\theta}(t_i) M_i^2.
\end{align}

 \textbf{Case 1}: $a_0 = +\infty$.
In this case, we show that $u^{\infty}$ is plurisubharmonic in $\mathbb{C}^{n}$. Since $u^{t_i}$ solves the LYZ equation \eqref{LYZt}, we have
\[
\chi + t_i\omega + \sqrt{-1}\partial\bar{\partial} u^{t_i} > \cot\hat{\theta}(t_i)\omega.
\]
Then in the coordinate ball $B_r(p_0)$,
\begin{align*}
    (u^{t_i}_{\alpha\bar{\beta}})_{n\times n} \ge (\cot\hat{\theta}(t_i) - C)g.
\end{align*}

Now fix $R > 0$ and $\epsilon > 0$. For sufficiently large $i$ and all $z \in B_R(0)$, we have
\begin{align*}
    (\hat{u}^{t_i}_{\alpha\bar{\beta}}(z))_{n\times n} \ge -M_i^{-2}(\cot\hat{\theta}(t_i) - C)g \ge -\epsilon I_n,
\end{align*}
where we have used the fact that $\lim_{i\to\infty} M_i^{-2}\cot\hat{\theta}(t_i) = 0$.
Therefore, $\hat{u}^{t_i} + \epsilon|z|^2$ is plurisubharmonic in $B_R(0)$. Since $\hat{u}^{t_i}$ converges uniformly to $u^{\infty}$ on compact sets, it follows that $u^{\infty} + \epsilon|z|^2$ is plurisubharmonic in $B_R(0)$. 

As $R > 0$ and $\epsilon > 0$ are arbitrary, we conclude that $u^{\infty}$ is plurisubharmonic in $\mathbb{C}^n$.
However, since $u^{\infty}$ is bounded and plurisubharmonic on $\mathbb{C}^n$, it must be constant. This contradicts the fact that $|\partial u^{\infty}(0)| = 1$.			
 		
 	\textbf{Case 2}: $0 \le a_0 < \infty$. Define
\[
A^{i}(z) = \left(g^{-1}(p_i + M_i^{-1}z) - I_n\right)(\hat{u}^{t_i}_{\alpha\bar{\beta}}(z))_{n\times n} + M_{i}^{-2}g^{-1}\chi(p_i + M_i^{-1}z) + t_iM_{i}^{-2} I_n.
\]
Then $A^{i}$ converges uniformly to $0$ on any compact subset of $\mathbb{C}^n$.

Since $u^{t_i}$ solves the LYZ equation \eqref{LYZt}, we have
\begin{align}\label{LYZti}
    \sigma_{n-1}\bigl(\bigl(\hat{u}_{\alpha\bar{\beta}}^{t_i}\bigr)_{n\times n} + A^{i}\bigr) = M_i^{2}\tan\hat{\theta}(t_i)\sigma_n\bigl(\bigl(\hat{u}_{\alpha\bar{\beta}}^{t_i}\bigr)_{n\times n} + A^{i}\bigr) + O(M_i^{-2}),
\end{align}
where the matrix $A^{i}$ is uniformly bounded
and by Lemma \ref{yuanlemma}, we have
\begin{align}
    \sigma_{k}\bigl(\bigl(\hat{u}_{\alpha\bar{\beta}}^{t_i}\bigr)_{n\times n} + A^{i}\bigr) > 0 \quad \text{for any } 1 \le k \le n-1.\label{gamman-1}
\end{align}

\textit{Subcase 2.1}: $a_0 = 0$. In this subcase, by \eqref{LYZti}, \eqref{gamman-1}, and the fact that $\hat{u}^{t_i}$ converges locally uniformly to $u^{\infty}$, we conclude that $u^{\infty}$ is $(n-1)$-subharmonic and satisfies the following $(n-1)$-Hessian equation in the sense of currents $\sigma_{n-1}(\sqrt{-1}\partial\bar{\partial} u^{\infty}) = 0.$  Hence, applying the Liouville theorem of Dinew-Ko{\l}odziej \cite{dk2017ajm}, we find that $u^{\infty}$ is constant, which is a contradiction.

\textit{Subcase 2.2}: $0 < a_0 < \infty$. Since $\hat{u}^{t_i}$ converges uniformly to $u^{\infty}$ on compact subsets of $\mathbb{C}^n$, we have that $u^{\infty} + a_0|z|^2$ is plurisubharmonic, $(n-1)$-subharmonic, $u^{\infty}+|z_{n+1}|^2$ is $n$-subharmonic and $u^{\infty}$ satisfies the equation $\sigma_n + a_0\sigma_{n-1} = 0$ in the sense of currents.

Moreover, for any test function $f \in C_c^{\infty}(\mathbb{C}^n)$ with $f \ge 0$ and $\int_{\mathbb{C}^{n}} f = 1$, we have
\begin{align*}
    \int_{\mathbb{C}^{n}} u^{\infty} \sqrt{-1}\partial\bar{\partial} f \wedge \mathcal{E}^{n-1}
    &= \lim_{i \to \infty} \int_{\mathbb{C}^{n}} \hat{u}^{t_i} \sqrt{-1}\partial\bar{\partial} f \wedge \mathcal{E}^{n-1} \\
    &= \lim_{i \to \infty} \int_{\mathbb{C}^{n}} f \sqrt{-1}\partial\bar{\partial} \hat{u}^{t_i} \wedge \mathcal{E}^{n-1} \\
    &\le C.
\end{align*}
This proves that $\Delta u^{\infty} \le C$ in the weak sense.

Next, we prove $u^{\infty}$ is a supersolution to the equation $\sigma_{n-1}+a_0\sigma_{n}=0$ in the viscosity sense.
For any $C^2$ function $\varphi$ with 
$\lambda(\varphi_{\alpha\bar \beta})\in \overline\Gamma_{n-1}$ and for any local maximum point $z_0$  of  $\varphi-u^{\infty}$,  we need to show 
{\vspace*{-0.2cm}}
\begin{align}
	\sigma_{n-1}\big(\varphi_{\alpha\bar \beta}( z_0)\big)+a_0\sigma_{n}\big(\varphi_{\alpha\bar \beta}( z_0)\big)\le 0.\label{supersolution}
\end{align}

If $\sigma_{n-1}(\varphi_{\alpha\bar \beta}( z_0))=0$,  then $\sigma_{n}(\varphi_{\alpha\bar \beta}( z_0))\le 0$ and thus \eqref{supersolution} holds.

So we assume $c_0=\sigma_{n-1}(\varphi_{\alpha\bar \beta}( z_0))>0$. Then for any $\epsilon>0$ small enough, we have 
\vspace*{-0.2cm}
\begin{align*}
	\lambda(\varphi_{\alpha\bar \beta}-2\epsilon\delta_{\alpha\beta})\in \Gamma_{n-1}\ \ \text{and}\ \
	\sigma_{n-1}(\varphi_{\alpha\bar \beta}-2\epsilon\delta_{\alpha\beta})>\frac{c_0}{2} \ \text{in}\ \overline B_{\epsilon}(z_0)\label{sigman-1positive}.
\end{align*}
For sufficiently large $i$ such that $|\hat u^{t_{i}} - u^{\infty}| < \frac{\epsilon^{3}}{2}$, 
consider the function 
\vspace*{-0.25cm}
\[
\Phi = \varphi - \epsilon|z - z_0|^2 - \hat u^{t_i}.
\]
Since for any $z \in \partial B_{\epsilon}(z_0)$, 
\begin{align*}
	\Phi(z)=& \varphi(z) - \hat{u}^{t_{i}}(z) - \epsilon^3\\ <&\varphi(z) - u^{\infty}(z) - \frac{\epsilon^3}{2} 
	\le  \varphi(z_0) - u^{\infty}(z_0) - \frac{\epsilon^3}{2}\\
	<& \varphi(z_0) - \hat u^{t_i}(z_0) =\Phi(z_0).
\end{align*}
Thus, $\Phi$ attains its maximum on 
$\overline{B}_{\epsilon}(z_0)$ at an interior point $\tilde z_{0} \in B_{\epsilon}(z_0)$.
	Then $(\Phi_{\alpha\bar \beta}( \tilde z_{0 }))_{n\times n}\le 0$, namely $(\varphi_{\alpha\bar \beta}( \tilde z_{0})-\epsilon \delta_{\alpha\beta})_{n\times n}\le (\hat u^{t_i}_{\alpha\bar \beta}( \tilde z_{0}))_{n\times n}$. Thus
$(\hat u^{t_i}_{\alpha\bar \beta}( \tilde z_{0})+A^{i}_{\alpha\bar\beta}(\tilde z_{0}))_{n \times n}\ge(\varphi_{\alpha\bar \beta}(\tilde z_{0})-2\epsilon \delta_{\alpha\beta})_{n\times n}$ and we obtain
{\vspace*{-0.3cm}}
$$
\sigma_{n-1}(\hat u^{t_i}_{\alpha\bar \beta}(\tilde z_{0})+A_{\alpha\bar \beta}(\tilde z_{0}))\ge \sigma_{n-1}\big(\varphi_{\alpha\bar \beta}( \tilde z_{0})-2\epsilon \delta_{\alpha\beta}\big)\ge \frac{c_0}{2}.
$$
By the concavity of $\frac{\sigma_{n}}{\sigma_{n-1}}$ in $\Gamma_{n-1}$, we get
{\vspace*{-0.3cm}}
\begin{align}
	&1-\tan\hat \theta(t_{i})M^2_{i}\frac{\sigma_n\big(
		\varphi_{\alpha\bar \beta}(\tilde z_{0})-2\epsilon\delta_{\alpha\beta}
		\big)}{\sigma_{n-1}\big(\varphi_{\alpha\bar \beta}(\tilde z_{0})-2\epsilon\delta_{\alpha\beta}\big)}\notag\\\le
	&	1-\tan\hat \theta(t_{i})M^2_{i}\frac{\sigma_n(
		\hat u^{t_{i}}_{\alpha\bar \beta}(\tilde z_{0})
		+A^{i}_{\alpha\bar \beta}(\tilde z_{0}))}{\sigma_{n-1} (
		\hat u^{t_{i}}_{\alpha\bar \beta}(\tilde z_{0})
		+A^{i}_{\alpha\bar \beta}(\tilde z_{0}))}\notag\\
	\le& c_0^{-1}CM_{i}^{-2},\label{super2}
\end{align}
where in the last inequality we use  \eqref{LYZti}.
Then we get
\begin{align}
	{\sigma_{n-1}\big(\varphi_{\alpha\bar \beta}(\tilde z_{0})-2\epsilon\delta_{\alpha\beta}\big)}-\tan\hat \theta(t_{i})M^2_{i}{\sigma_n\big(
		\varphi_{\alpha\bar \beta}(\tilde z_{0})-2\epsilon\delta_{\alpha\beta}
		\big)}\le c_{0}^{-1}CM_{i}^{-2}\sup|D^2\varphi|.
	\end{align}
Let $i\rightarrow\infty$ and then $\epsilon\rightarrow 0$  in the above, we obtain
\eqref{supersolution}. Hence $u^{\infty}$ is a supersolution to the equation $\sigma_{n}+a_0\sigma_{n-1}=0$. 

We can then apply the Liouville Theorem \ref{LiouTh} to $a_0^{-1}u^{\infty}$ to conclude that $a_0^{-1}u^{\infty}$ is constant, which again leads to a contradiction.
\end{proof}

\begin{proof}[Proof of Theorems \ref{FYZ} and \ref{FYZfamily}]
Once the gradient estimates for $u^t$ are established, we obtain the uniform bound
\[
\|u^t\|_{C^1} + \max_M |\partial\bar{\partial} u^t| \le C,
\]
where $C$ is independent of $t$. Following the argument of Collins--Jacob--Yau \cite{cjy2020}, we derive uniform $C^{2,\alpha}$ estimates for $u^t$, also independent of $t$. For any $\beta \in (0,\alpha)$, there exists a subsequence $\{u^{t_i}\}$ converging in $C^{2,\beta}$ to a limit $u^0 \in C^{2,\alpha}$. Thus, $u^0$ solves the critical LYZ equation \eqref{LYZ1}. By standard elliptic regularity theory, $u^0$ is smooth.
\end{proof}

 	\section{Examples: the 3-dimensional case and 4-dimensional case}
 	In this section, as corollaries of our main theorem, we solve the 2-Hessian equation in dimension 3 and a Hessian quotient equation in dimension 4 under weaker conditions than those in the previous works. 
 	
 	In  dimension 3 , since
 	\begin{align*}
 		(\chi_u+\sqrt{-1}\omega)^3=&\chi_u^3-3\chi_u\wedge\omega^2+\sqrt{-1}(3\chi_u^2\wedge\omega-\omega^3),
 	\end{align*}
 	the critical LYZ equation is exactly the  2-Hessian equation
 	$$3\chi_u^2\wedge\omega=\omega^3.$$
 
 Sun \cite{sun2017cpam} solved the  above equation under  the following conditions:
 		\begin{align*}
 	 3\int_M\chi^2\wedge\omega=\int_M\omega^3,\quad \text{and} \quad
 		\chi\in \Gamma_2 (M),
 		\end{align*}
 	with the following continuity method
 	\begin{align*}
 		3\chi_u^2\wedge\omega=\Big(t+(1-t)\frac{3\chi^2\wedge\omega}{\omega^3}\Big)\omega^3.
 	\end{align*}
 	
 		We consider a family of LYZ  equations \eqref{LYZt} 
 	\begin{align*}
 		3(\chi+t\omega+\sqrt{-1}\partial\bar\partial u^t)^2\wedge\omega=&
 		\tan\hat\theta(t)(\chi+t\omega+\sqrt{-1}\partial\bar\partial u^t)^3\\
 		&-3\tan\hat\theta(t)(\chi+t\omega+\sqrt{-1}\partial\bar\partial u^t)\wedge\omega^2+\omega^3.
 	\end{align*}
 	As a corollary of our theorem, we solve the 2-Hessian equation  under weaker conditions instead of  the usual condition $\chi\in \Gamma_2(M)$.
 	\begin{corollary}
 		Let $(M,\omega)$ be a compact K\"ahler manifold with dimension $n=3$ and $\chi$ be a real $(1,1)$-form satisfying the following conditions:
 		\begin{enumerate}[(i)]
 			\item
 				$3\int_M\chi^2\wedge\omega=\int_M\omega^3$,
 				$\int_M\chi^3<3\int_M \chi\wedge\omega^2\label{re<0}$;
 			\item
 			the subsolution condition
 			\begin{align*}
 				\chi\wedge\omega>0\ \ \text{as a  $(2,2)$-form}.
 			\end{align*}
 		\end{enumerate}
 		Then there exists a unique smooth  solution $u$ with $\sup_M u=0$ solving
 		\begin{align*}
 			\sigma_2(\chi_u)=1,\quad \chi_u\in \Gamma_{2}(M).
 		\end{align*}
 	\end{corollary}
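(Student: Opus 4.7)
The plan is to derive this as a direct consequence of Theorem \ref{FYZ} applied to the case $n=3$. We must verify two things: that the cohomological assumption forces the principal argument of $\int_M(\chi+\sqrt{-1}\omega)^3$ to be $\pi$, and that the pointwise positivity $\chi\wedge\omega>0$ furnishes a subsolution in the sense of \eqref{subsolution} (with the trivial choice $\underline u=0$). Once both are checked, Theorem \ref{FYZ} produces a unique smooth $u$ with $\sup_M u=0$ solving the critical LYZ equation $\theta_\omega(\chi_u)=\pi/2$, which we then identify with $\sigma_2(\chi_u)=1$; the cone membership $\chi_u\in\Gamma_2(M)$ is automatic from Lemma \ref{yuanlemma}(3) applied with $n=3$ and $\tau=\pi/2$.

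For the cohomological step, I would simply expand
\[
(\chi+\sqrt{-1}\omega)^3=\chi^3-3\chi\wedge\omega^2+\sqrt{-1}\bigl(3\chi^2\wedge\omega-\omega^3\bigr)
\]
and integrate. The assumption $3\int_M\chi^2\wedge\omega=\int_M\omega^3$ forces the imaginary part to vanish, while $\int_M\chi^3<3\int_M\chi\wedge\omega^2$ makes the real part strictly negative. Hence $\int_M(\chi+\sqrt{-1}\omega)^3$ is a negative real number and its principal argument is exactly $\pi$. This verifies the cohomological hypothesis of Theorem \ref{FYZ}. A parallel expansion identifies the resulting equation $\mathrm{Im}(e^{-\sqrt{-1}\pi}(\chi_u+\sqrt{-1}\omega)^3)=0$ with $3\chi_u^2\wedge\omega=\omega^3$, i.e., $\sigma_2(\chi_u)=1$.

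For the subsolution step, I would take $\underline u=0$. At any point $z\in M$, diagonalize $\chi$ with eigenvalues $\lambda_1,\lambda_2,\lambda_3$ with respect to $\omega$. A direct computation gives
\[
\chi\wedge\omega=(\lambda_1+\lambda_2)\,e_1\wedge e_2+(\lambda_1+\lambda_3)\,e_1\wedge e_3+(\lambda_2+\lambda_3)\,e_2\wedge e_3,
\]
where $e_i=\sqrt{-1}\,dz_i\wedge d\bar z_i$, so positivity of $\chi\wedge\omega$ as a $(2,2)$-form is equivalent to $\lambda_i+\lambda_j>0$ for every $i\neq j$. Since $\arctan$ is odd and strictly increasing, this is in turn equivalent to $\arctan\lambda_i+\arctan\lambda_j>0$ for every $i\neq j$, i.e.,
\[
\min_{1\le j\le 3}\sum_{i\neq j}\arctan\lambda_i(\chi)>0=\tfrac{n-3}{2}\pi,
\]
which is precisely \eqref{subsolution} for $n=3$ with $\underline u=0$.

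I do not anticipate any real obstacle: both verifications are essentially algebraic manipulations once one knows the correct dictionary between $\chi\wedge\omega>0$ and pairwise positivity of eigenvalues, and between $\mathrm{Im}(\chi_u+\sqrt{-1}\omega)^3$ and $\sigma_2$. The only point requiring mild care is making explicit that the subsolution hypothesis $\chi\wedge\omega>0$ is strictly weaker than the previous requirement $\chi\in\Gamma_2(M)$ used by Sun \cite{sun2017cpam}: the latter demands in particular $\lambda_i+\lambda_j+\lambda_k>0$ and $\sigma_2(\lambda)>0$, both of which are strictly stronger than the pairwise condition and neither of which is needed here. Uniqueness and smoothness of $u$ are part of the conclusion of Theorem \ref{FYZ}, so no further work is required.
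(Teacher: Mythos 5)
Your proposal is correct and follows essentially the same route as the paper: condition (i) forces the principal argument of $\int_M(\chi+\sqrt{-1}\omega)^3$ to be $\pi$, condition (ii) makes $\underline u=0$ a subsolution in the sense of \eqref{subsolution} (pairwise positivity $\lambda_i+\lambda_j>0$ being exactly $\chi\wedge\omega>0$), and Theorem \ref{FYZ} then yields the unique solution, which is identified with $\sigma_2(\chi_u)=1$ via the expansion of $(\chi_u+\sqrt{-1}\omega)^3$. Your added verifications simply make explicit what the paper leaves implicit.
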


\begin{proof}
    Condition $(i)$ gives that the principal argument of $\int_M(\chi+\sqrt{-1}\omega)^3$ is $\pi$. Condition $(ii)$ means that $0$ is a subsolution of the critical LYZ equation in dimension $3$. According to Theorem \ref{FYZ}, there exists a smooth solution to the critical LYZ equation in dimension $3$, which is exactly the $2$-Hessian equation $\sigma_2(\chi_u)=1$.
\end{proof}

\begin{remark}
    If $\chi\in \Gamma_2(M)$, then it satisfies the subsolution condition $(ii)$. However, the subsolution condition is strictly weaker than $\chi\in \Gamma_2(M)$. 
    We will prove that the integral condition $(i)$ is necessary.
\end{remark}

Let $\lambda=(\lambda_1,\lambda_2,\lambda_3)$ be the eigenvalues of $\chi_u$ with respect to $\omega$ such that $\lambda_1\ge \lambda_2\ge \lambda_3$. Since $\sigma_2(\chi_u)=1$, we have
\begin{align*}
    \sigma_3(\chi_u)\le \frac{1}{9}\sigma_1(\chi_u)\sigma_2(\chi_u)=\frac{1}{9}\sigma_1(\chi_u).
\end{align*}
Integrating by parts, we obtain
\begin{align*}
    \int_M\chi^3 &= \int_M\chi_u^3 \le\frac{1}{3} \int_M{\chi_u\wedge\omega^2}= \frac{1}{3} \int_M{\chi\wedge\omega^2} \\
    &< 3 \int_M{\chi\wedge\omega^2},
\end{align*}
which proves the inequality in condition $(i)$.

 When $n=4$, the critical LYZ equation is the Hessian quotient equation. In this case, the subsolution condition yields $3\chi^2\wedge\omega-\omega^3>0$ as a $(3,3)$-form. Under the additional assumption that $\chi\in \Gamma_3(M)$, Sun \cite{sun2017cpam} and Sz\'{e}kelyhidi \cite{szek2018} proved the existence result using different continuity methods.
    
As a corollary of our result, we prove an existence result for the Hessian quotient equation $\sigma_3(\chi_u)=\sigma_1(\chi_u)$ under conditions weaker than those previously required.
\begin{corollary}\label{4dHQ1}
    Let $(M, \omega)$ be a compact K\"ahler manifold of dimension $n=4$.
    Let $\chi$ be a real $(1,1)$-form satisfying the following conditions:
    \begin{enumerate}[(i)]
        \item
            $ \int_M\chi^3\wedge\omega=\int_M\chi\wedge\omega^3$,
            $ \int_M\chi\wedge\omega^3>0$, and
            $ \mathrm{Re}\int_M (\chi+\sqrt{-1}\omega)^4<0$;
        \item
            $3\chi^2\wedge\omega-\omega^3>0$ as a $(3,3)$-form.
    \end{enumerate}
    Then there exists a unique smooth function $u$ with $\sup_{M} u=0$ solving 
    \begin{align}\label{4dHessianq}
        \sigma_3(\chi_u)=\sigma_1(\chi_u),\quad
        \chi_u\in \Gamma_3(M).
    \end{align}
\end{corollary}

\begin{proof}
    Condition $(i)$ implies that the principal argument of $\int_M(\chi+\sqrt{-1}\omega)^4$ is $\pi$. To apply Theorem \ref{FYZ}, we only need to show that $0$ is a subsolution of the critical LYZ equation.
    
    By condition $(ii)$, we have $\sigma_{2}(\chi)>2$, which implies $\sigma_1^2(\chi) \ge \frac{8}{3}\sigma_2(\chi) > \frac{16}{3}$. Moreover, since there exists a point $z_0 \in M$ such that $\sigma_1(\chi(z_0)) = 4\frac{\int_M\chi\wedge\omega^3}{\int_M \omega^4} > 0$, we conclude that $\sigma_1(\chi) > 0$ on $M$.
    
    Let $\mu=(\mu_1,\mu_2,\mu_3, \mu_4)$ be the eigenvalues of $\chi$ with respect to $\omega$, ordered such that $\mu_1 \ge \mu_2 \ge \mu_3 \ge \mu_4$. We need to prove
    \begin{align}\label{4dsub}
        \arctan\mu_2+\arctan\mu_3+\arctan\mu_4 > \frac{\pi}{2}.
    \end{align}
    Indeed, condition $(ii)$ gives $\sigma_2(\mu|1) = \mu_2\mu_3 + \mu_2\mu_4 + \mu_3\mu_4 > 1$. Since $\sigma_2(\mu|1) > 0$ and $\mu \in \Gamma_2$, it follows that $\mu_2 + \mu_3 > 0$, and consequently
    \[
    \mu_4 > \frac{1 - \mu_2\mu_3}{\mu_2 + \mu_3}.
    \]
    Therefore,
    \[
    \arctan \mu_4 > \arctan\Bigl( \frac{1 - \mu_2\mu_3}{\mu_2 + \mu_3} \Bigr) = \frac{\pi}{2} - \arctan\mu_2 - \arctan\mu_3,
    \]
    which proves \eqref{4dsub}. Hence, $0$ is a subsolution of the critical LYZ equation.
\end{proof}
We next show that condition $(i)$ is necessary.

\begin{lemma}
    Let $u$ be the unique smooth solution to the Hessian quotient equation \eqref{4dHessianq}. Then $\chi$ satisfies condition $(i)$.
\end{lemma}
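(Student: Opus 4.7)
The plan is to combine integration by parts on the closed forms $\omega$ and $\chi$ with pointwise information from the equation $\sigma_3(\chi_u)=\sigma_1(\chi_u)$ and from $\chi_u\in\Gamma_3(M)$, together with the observation that the solution $u$ produced in Corollary~\ref{4dHQ} actually solves the critical LYZ equation $\theta_{\omega}(\chi_u)\equiv\pi$ pointwise.

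First I would record that since $d\omega=d\chi=0$ and $\chi_u=\chi+\sqrt{-1}\p\bar\p u$, Stokes' theorem gives $\int_M\chi_u^k\wedge\omega^{4-k}=\int_M\chi^k\wedge\omega^{4-k}$ for every $0\le k\le 4$. Using the normalization $\sigma_k(\chi_u)\omega^4=C_4^k\,\chi_u^k\wedge\omega^{4-k}$, the PDE $\sigma_3(\chi_u)=\sigma_1(\chi_u)$ becomes the pointwise identity $\chi_u^3\wedge\omega=\chi_u\wedge\omega^3$; integrating and applying the preceding identity for $k=1,3$ yields the desired equality $\int_M\chi^3\wedge\omega=\int_M\chi\wedge\omega^3$. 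Next, the cone condition $\chi_u\in\Gamma_3(M)$ forces $\sigma_1(\chi_u)>0$, so $\chi_u\wedge\omega^3>0$ as a $(4,4)$-form, and integration produces $\int_M\chi\wedge\omega^3>0$. This handles the first two parts of condition $(i)$ with no analytic input beyond Stokes' theorem.

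For the last inequality $\mathrm{Re}\int_M(\chi+\sqrt{-1}\omega)^4<0$, I would pass to the pointwise eigenvalues $\lambda_1,\ldots,\lambda_4$ of $\chi_u$ with respect to $\omega$ and use the factorization
\begin{align*}
\frac{(\chi_u+\sqrt{-1}\omega)^4}{\omega^4}=\prod_{i=1}^4(\lambda_i+\sqrt{-1})=\Bigl(\prod_{i=1}^4\sqrt{1+\lambda_i^2}\Bigr)e^{\sqrt{-1}\theta_{\omega}(\chi_u)}.
\end{align*}
Since $u$ is the solution furnished by Corollary~\ref{4dHQ} via Theorem~\ref{FYZ}, it in fact satisfies the critical LYZ equation $\theta_{\omega}(\chi_u)\equiv\pi$, so $\cos\theta_{\omega}(\chi_u)=-1$ pointwise. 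Therefore $\mathrm{Re}(\chi_u+\sqrt{-1}\omega)^4=-\bigl(\prod_i\sqrt{1+\lambda_i^2}\bigr)\omega^4<0$ pointwise, and integration by parts for $k=4$ gives $\mathrm{Re}\int_M(\chi+\sqrt{-1}\omega)^4=\mathrm{Re}\int_M(\chi_u+\sqrt{-1}\omega)^4<0$.

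The only delicate point I foresee is the identification $\theta_{\omega}(\chi_u)\equiv\pi$ rather than merely $\sigma_3(\chi_u)=\sigma_1(\chi_u)$: the latter only forces $\mathrm{Im}\prod_i(\lambda_i+\sqrt{-1})=0$, i.e.\ $\theta_{\omega}(\chi_u)\in\pi\mathbb{Z}$ at every point, so by continuity on the connected manifold $M$ this value is a locally constant element of $\{-\pi,0,\pi\}$. The constant must equal $\pi$: this is immediate from the construction, since $u$ arises as a limit of solutions to the supercritical equations \eqref{LYZt} whose phases $\theta(t)$ lie slightly above $\pi$, and the phase is preserved in the limit. Alternatively, one can rule out $\theta\equiv 0$ on the locus $\{\sigma_3=\sigma_1\}\cap\Gamma_3$ by inspecting a symmetric model family such as $\lambda=(t,t,t,s(t))$ with $\sigma_3=\sigma_1$, on which the $\theta\equiv 0$ branch forces $\sigma_1<0$, contradicting $\chi_u\in\Gamma_3(M)$. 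Once this identification is secured, the pointwise sign computation above closes the argument with no further estimates needed.
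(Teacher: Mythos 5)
Your handling of the equality $\int_M\chi^3\wedge\omega=\int_M\chi\wedge\omega^3$ and of $\int_M\chi\wedge\omega^3>0$ coincides with the paper's (Stokes plus $\sigma_1(\chi_u)>0$ from $\chi_u\in\Gamma_3(M)$). The problem is the third inequality. This lemma is the \emph{necessity} statement: the only admissible hypotheses are that $u$ is smooth, $\sigma_3(\chi_u)=\sigma_1(\chi_u)$ and $\chi_u\in\Gamma_3(M)$. Your primary justification of $\theta_\omega(\chi_u)\equiv\pi$ --- that ``$u$ is the solution furnished by Corollary \ref{4dHQ} via Theorem \ref{FYZ}, arising as a limit of supercritical solutions with phases slightly above $\pi$'' --- is circular here: that construction presupposes condition $(i)$, which is precisely what the lemma is supposed to derive, and with that input the statement is vacuous. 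Your fallback, ruling out the branch $\theta\equiv 0$ by ``inspecting a symmetric model family $\lambda=(t,t,t,s(t))$'', is not a proof either: the eigenvalues of $\chi_u$ at a point of $M$ need not lie on that one-parameter curve, so checking it does not exclude $\theta=0$ at a general point of $\{\sigma_3=\sigma_1\}\cap\Gamma_3$.

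The gap is fixable, and once fixed your route is genuinely different from the paper's. The paper never mentions the phase: it shows pointwise that $\sigma_4-\sigma_2+1\le -2$, using Newton--MacLaurin ($\sigma_2\ge 6\sigma_3/\sigma_1=6$, hence $\sigma_2(\lambda|4)\ge 3$) and the equation solved for $\lambda_4=\frac{\sigma_3(\lambda|4)-\sigma_1(\lambda|4)}{1-\sigma_2(\lambda|4)}$, and then integrates $\mathrm{Re}(\chi_u+\sqrt{-1}\omega)^4=(\sigma_4-\sigma_2+1)\,\omega^4$. To complete your phase argument without circularity you must prove directly that $\lambda\in\Gamma_3$ and $\sigma_3=\sigma_1$ force $\theta=\pi$: by Lemma \ref{yuanlemma}, $\lambda_1\ge\lambda_2\ge\lambda_3>0$, so $\theta=-\pi$ is impossible; and if $\theta=0$ at some point, then $\arctan\lambda_1+\arctan\lambda_2+\arctan\lambda_3<\tfrac{\pi}{2}$, whence $\sigma_2'<1$ and $\lambda_4=-\frac{\sigma_1'-\sigma_3'}{1-\sigma_2'}$ (primes denoting the elementary symmetric functions of $\lambda_1,\lambda_2,\lambda_3$), giving $\sigma_3=\frac{\sigma_3'-\sigma_1'\sigma_2'}{1-\sigma_2'}\le\frac{-8\sigma_3'}{1-\sigma_2'}<0$ since $\sigma_1'\sigma_2'\ge 9\sigma_3'$, contradicting $\Gamma_3$. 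With such a pointwise argument (or simply the paper's algebraic one) the rest of your computation closes; note also that the factorization should read $\prod_i(\lambda_i+\sqrt{-1})=\bigl(\prod_i\sqrt{1+\lambda_i^2}\bigr)e^{-\sqrt{-1}\theta_\omega(\chi_u)}$, a sign that is harmless since only the real part is used.
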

	
\begin{proof}
    The equality in condition $(i)$ follows directly from integrating the equation.
    Since $\sigma_1(\chi_u)>0$, we have
    \begin{align*}
        \int_{M}\chi\wedge \omega^3 = \int_{M}\chi_u\wedge \omega^3 > 0.
    \end{align*}
    Let $\lambda=(\lambda_1,\lambda_2,\lambda_3,\lambda_4)$ be the eigenvalues of $\chi_u$ with respect to $\omega$, ordered such that $\lambda_1 \ge \lambda_2 \ge \lambda_3 \ge \lambda_4$.
    
    By the MacLaurin inequality, we obtain
    \begin{align*}
        \sigma_2 \ge 6\frac{\sigma_3}{\sigma_1} = 6. 
    \end{align*}
    The inequality $\lambda_4\sigma_1(\lambda|4) \le \sigma_2(\lambda|4)$ yields
    \begin{align*}
        \sigma_2 = \sigma_2(\lambda|4) + \lambda_4\sigma_1(\lambda|4) \le 2\sigma_2(\lambda|4),
    \end{align*}
    and consequently,
    \begin{align*}
        \sigma_2(\lambda|4) \ge 3. 
    \end{align*}
    Using the identity $\lambda_4 = \dfrac{\sigma_3(\lambda|4) - \sigma_1(\lambda|4)}{-\sigma_2(\lambda|4) + 1}$, we compute
    \begin{align*}
        \sigma_{4} - \sigma_2 + 1 &= \lambda_{4}\sigma_3(\lambda|4) - \lambda_4\sigma_1(\lambda|4) - \sigma_2(\lambda|4) + 1 \\
        &= \frac{\bigl(\sigma_3(\lambda|4) - \sigma_1(\lambda|4)\bigr)^2}{-\sigma_2(\lambda|4) + 1} - \sigma_2(\lambda|4) + 1 \\
        &\le -2.
    \end{align*}
    Therefore,
    \begin{align*}
        \mathrm{Re}\int_M (\chi+\sqrt{-1}\omega)^4 &= \mathrm{Re}\int_M (\chi_u+\sqrt{-1}\omega)^4 \\
        &= \int_M (\sigma_4 - \sigma_2 + 1)\,\omega^4 < 0,
    \end{align*}
    which establishes the second inequality in condition $(i)$.
\end{proof}
 	
 	\section{Further discussion on the Liouville-type theorem}
 	In this section, we generalize the Liouville-type theorem in the previous section and list two problems.

 		\begin{theorem}\label{LiouTh0}
 		Let $v: \mathbb{C}^n \rightarrow \mathbb{R}$ be a $C^{0,1}$ function with $n\ge 2$. Assume that for some $2\le k\le n$, $v$ is $(k-1)$-subharmonic and that  $\tilde v=v+|z_{n+1}|^2$ is $k$-subharmonic in $\mathbb{C}^{n+1}$.  Suppose there exists a positive constant $C$ such that $\|v\|_{C^{0,1}(\mathbb{C}^n)} \le C$ and $\Delta v\le C$ in the weak sense. If $v$ is a weak solution in the sense of currents and a viscosity supersolution to the equation
 		\begin{align}\label{kk-1Eq}
 			\sigma_{k-1}(\sqrt{-1}\partial\bar{\partial} v) + \sigma_{k}(\sqrt{-1}\partial\bar{\partial} v) = 0,
 		\end{align}
 		then $v$ must be a constant.
 	\end{theorem}

 		\begin{remark}\label{kvlaplacebound}
 		Since $\tilde v$ is $2$-subharmonic, the weak Laplacian bound implies that the regularization $[v]_r$ of $v$ has bounded complex Hessian.
 	\end{remark}

 	\begin{proof}
 		 The proof is based on Lemma \ref{LiouTh}.  
 		Assume that $v$ is not a constant function. Then $\inf_{\mathbb{C}^n} v < \sup_{\mathbb{C}^n} v$.
 		By scaling, we may assume that $v$ satisfies 
 		\begin{equation*}
 			\begin{cases}
 				\sup_{\mathbb{C}^n} v = 1, \quad \inf_{\mathbb{C}^n} v = 0, \\
 				|\nabla v| \le CC_1, \quad \Delta v \le C,
 			\end{cases}
\end{equation*}
 where $C_1$ denotes $(\sup_{\mathbb{C}^n} v - \inf_{\mathbb{C}^n} v)^{\frac{1}{2}}$.
 		We prove the theorem by induction on $n-k$. 
 		
 		 When $n-k=0$, this is Theorem \ref{LiouTh}.  		
 		Assume the theorem holds for $n-k\le m$. We need to prove it holds for  $n-k=m+1$. We still consider two cases. 
 		
 		\textbf{Case 1}: There exist a constant $\rho>0$ and  sequences $\epsilon_l \to 0$, $x_l \in \mathbb{C}^n$, $r_l \to \infty$, and  unit $(1,0)$-vectors $\xi_l$ such that
 		\begin{align}
 			\frac{1}{2}\Bigl(\Bigl[\frac{(v^{\epsilon})^2}{3}\Bigr]_{r}(x_l) + [v]_{\rho}(x_l) - \frac{7}{6}\Bigr) &\ge v(x_l), \label{kw-v} 
 		\end{align}
 		and
 		\begin{align}
 			\lim_{l \to \infty} \int_{B_{r_l}(x_l)} |v_{\xi_l}^{\epsilon}|^2(z) \, \mathcal{E}^n(z) &= 0. \label{kintegral-zero}
 		\end{align}
 		In this case, we  obtain a $(k-1)$-subharmonic function $v^{\infty}$ in $\mathbb{C}^n$ and $v^{\infty}$ solves $\sigma_{k-1}+\sigma_{k}=0$ in $\mathbb{C}^{n-1}$, has bounded Laplacian and $v^{\infty}+|z_{n+1}|^2$ is $k$-subharmonic in $\mathbb{C}^{n}$. 
 		Since $(n-1)-k=m$, by induction,  $v^{\infty}$ must be a constant.
 		
	\textbf{Case 2}: Suppose Case 1 does not hold. Then for every $\rho > 0$, there exists a constant $C_{\rho} > 0$ with the following property: for all $\epsilon < C_{\rho}^{-1}$, $r > C_{\rho}$, $x \in \mathbb{C}^{n}$, and any unit $(1,0)$-vector $\xi$,  when  $\frac{1}{2}\bigl(\bigl[\frac{(v^{\epsilon})^2}{3}\bigr]_{r}(z) -{\tau_0}|z|^2+ [v]_{\rho}(z) \bigr) - v(x) \ge 0$, we have
		\begin{align}\label{ksecondcase}
			\int_{B_{r}(x)} |v^{\epsilon}_{\xi}|^2(z) \, \mathcal{E}^n(z) \ge c_0 := C_{\rho}^{-1}.
		\end{align}
 		As before, the open set
 		\begin{equation*}
 			\Omega := \Bigl\{z \in \mathbb{C}^n : v(z) <w:=\frac{1}{2}\Bigl(\Bigl[\frac{(v^{\epsilon})^2}{3}\Bigr]_{r}(z) -{\tau_0}|z|^2+ [v]_{\rho}(z) - \frac{7}{6}\Bigr)  \Bigr\}.
 		\end{equation*}
 		 is  nonempty and bounded.
 	
 		We will show  $\frac{1}{2}\bigl(\bigl[\frac{(v^{\epsilon})^2}{3}\bigr]_{r}(z) -{\tau_0}|z|^2+ [v]_{\rho}(z) \bigr)$
 		is a subsolution to $\sigma_{k-1} + \sigma_{k} = 0$ in $\Omega$. Assuming this, the comparison principle (Lemma \ref{Lemma4.2}) implies that $v \ge w$ in $\Omega$, contradicting the definition of $\Omega$.
 		
 		We still have $[v]_{r}$, $[v]_{\rho}$, $\frac{(v^{\epsilon})^2}{2}$, and $\bigl[\frac{(v^{\epsilon})^2}{2}\bigr]_{r}$ are all subsolutions to \eqref{kk-1Eq}.
 		
 		From \eqref{ksecondcase}, there exists a positive constant $b_0$, depending only on $c_0$ and $C(n,k)$, such that $\bigl[\frac{(v^{\epsilon})^2}{2}\bigr]_{r} - b_0|z|^2$ is $(k-1)$-subharmonic. By the concavity of $\sigma_{k-1}^{1/(k-1)}$ in $\Gamma_{k-1}$, we have
 		\begin{align}\label{kb0bound}
 			\sigma_{k-1}\Bigl(\sqrt{-1}\partial\bar{\partial}\Bigl[\frac{(v^{\epsilon})^2}{2}\Bigr]_{r}\Bigr) \ge C_{n}^{k-1}b_0^{k-1}.
 		\end{align}
 		
 		We now show that $F := \bigl[\frac{(v^{\epsilon})^2}{3}\bigr]_{r} - \tau_0|z|^2$ is a subsolution to $\sigma_k + \sigma_{k-1} = 0$ in $\Omega$ for sufficiently small $\tau_0$. Indeed, by \eqref{kb0bound}, we have:
\[
 		\begin{aligned}
 			&\sigma_k(\sqrt{-1}\partial\bar{\partial} F) + \sigma_{k-1}(\sqrt{-1}\partial\bar{\partial} F) \\
 			=& \sigma_k\Bigl(\sqrt{-1}\partial\bar{\partial} \Bigl[\frac{(v^{\epsilon})^2}{3}\Bigr]_{r}\Bigr) + \sigma_{k-1}\Bigl(\sqrt{-1}\partial\bar{\partial} \Bigl[\frac{(v^{\epsilon})^2}{3}\Bigr]_{r}\Bigr) + O(\tau_0) \\
 			=& \Bigl(\frac{2}{3}\Bigr)^k \Bigl[\sigma_k\Bigl(\sqrt{-1}\partial\bar{\partial} \Bigl[\frac{(v^{\epsilon})^2}{2}\Bigr]_{r}\Bigr) + \frac{3}{2} \sigma_{k-1}\Bigl(\sqrt{-1}\partial\bar{\partial} \Bigl[\frac{(v^{\epsilon})^2}{2}\Bigr]_{r}\Bigr)\Bigr] + O(\tau_0) \\
 			\ge& \frac{1}{2}\Bigl(\frac{2}{3}\Bigr)^{k} \sigma_{k-1}\Bigl(\sqrt{-1}\partial\bar{\partial} \Bigl[\frac{(v^{\epsilon})^2}{2}\Bigr]_{r}\Bigr) + O(\tau_0) \\
 			\ge& \frac{1}{2}\Bigl(\frac{2}{3}\Bigr)^kC_{n}^{k-1}b_0^{k-1}-C\tau_0\\
 			>&0
 		\end{aligned}
\]
 		where $O(\tau_0)=\sum_{j=1}^{k-1}C_{j}\tau^{k-j}\sigma_{j}\bigl(\sqrt{-1}\partial\bar{\partial} \bigl[\frac{(v^{\epsilon})^2}{2}\bigr]_{r}\bigr)$ and since  $[(v^{\epsilon})^2]_{r}$ has uniform $C^{0,1}$-bounds and complex Hessian bounds, 
 		we have	$|O(\tau_0)|\le C\tau_0$. Then  $F$ is a subsolution in $\Omega$ if $\tau_0$ is sufficiently small.
 		
 		Similar to Lemma \ref{keylemmavr} $(iv)$, we get 
 		$\frac{1}{2}\bigl(\bigl[\frac{(v^{\epsilon})^2}{3}\bigr]_{r}(z) -{\tau_0}|z|^2+ [v]_{\rho}(z) \bigr)$ is a subsolution to the equation $\sigma_{k}+\sigma_{k-1}=0$ in $\Omega$.
 	\end{proof}
 	
We conclude this section with the following open problems. 
 	
 		\begin{question}
 			Can the conditions in Theorem \ref{LiouTh} and Theorem \ref{LiouTh0} be weakened?
 		\end{question}

 		\begin{question}
Let $k,l$ be fixed integers with $1\le l<k\le n$.
 		Are there suitable conditions such that all solutions of the following equation 
 			\begin{align*}
 				\sigma_{k}(\sqrt{-1}\partial\bar{\partial} u) + \sigma_{l}(\sqrt{-1}\partial\bar{\partial} u) = 0
 			\end{align*}
 			are constants? (The case $l=k-1$ is already treated in Theorem~\ref{LiouTh0}.)
 			
 		\end{question}
 		
\bibliographystyle{plain}
\bibliography{2026.06.10cLYZ}

\end{document}